\documentclass{amsart}
\usepackage{amssymb,amsmath,graphicx,color}
\usepackage[all]{xy}
\usepackage{enumerate}
\usepackage{mathrsfs}

\theoremstyle{plain}
\newtheorem{theorem}{Theorem}[section]
\newtheorem{lemma}[theorem]{Lemma}
\newtheorem{corollary}[theorem]{Corollary}
\newtheorem{proposition}[theorem]{Proposition}

\theoremstyle{definition}
\newtheorem{definition}[theorem]{Definition}

\theoremstyle{remark}
\newtheorem{remark}[theorem]{Remark}


\newcommand\Aut{\operatorname{Aut}}
\newcommand\Inn{\operatorname{Inn}}
\newcommand\Out{\operatorname{Out}}
\newcommand\Hom{\operatorname{Hom}}
\newcommand\End{\operatorname{End}}
\newcommand\Mat{\operatorname{Mat}}
\newcommand\GL{\operatorname{GL}}
\newcommand\IA{\operatorname{IA}}

\newcommand\Sym{\operatorname{Sym}}
\newcommand\sym{\operatorname{sym}}
\newcommand\alt{\operatorname{alt}}
\newcommand\Ext{\operatorname{Ext}}

\newcommand\id{\operatorname{id}}

\newcommand\opegr{\operatorname{gr}}
\newcommand\Span{\operatorname{Span}}

\newcommand\gr{\mathrm{gr}}

\newcommand\op{\mathrm{op}}
\newcommand\ab{\mathrm{ab}}
\newcommand\sgn{\mathrm{sgn}}

\newcommand\Z{\mathbb{Z}}
\newcommand\N{\mathbb{N}}

\newcommand\K{\Bbbk}
\newcommand\R{\mathbb{R}}
\newcommand\repS{\mathbb{S}} 

\newcommand\A{\mathbf{A}} 
\newcommand\F{\mathbf{F}} 
\newcommand\FAb{\mathbf{FAb}} 
\newcommand\Grp{\mathbf{Grp}}
\newcommand\Ab{\mathbf{Ab}}
\newcommand\fVect{\mathbf{fVect}}
\newcommand\gVect{\mathbf{gVect}}
\newcommand\Vect{\mathbf{Vect}}

\newcommand\catC{\mathcal{C}} 
\newcommand\catD{\mathcal{D}}


\newcommand\jfA{\mathcal{A}}
\newcommand\jfE{\mathcal{E}}
\newcommand\g{\mathfrak{g}}

\newcommand\flA{\mathscr{A}}
\newcommand\flB{\mathscr{B}}
%

\newcommand\gpS{\mathfrak{S}} 

\newcommand\ti{\tilde}
\newcommand\wti{\widetilde}


\newcommand\centre[1]{\begin{array}{c} #1 \end{array}}
\newcommand\centre{\input{[}}1]{\centre{\input{#1}}}

\title[Actions of automorphism groups of free groups on Jacobi diagrams]{Actions of automorphism groups of free groups on spaces of Jacobi diagrams. I}
\author{Mai Katada}
\address{Department of Mathematics, Kyoto University, Kyoto 606-8502, Japan}
\email{katada.mai.36s@st.kyoto-u.ac.jp}
\date{September 9, 2021 (First version: February 5, 2021)}

\begin{document}

\begin{abstract}
  We consider an action of the automorphism group $\Aut(F_n)$ of the free group $F_n$ of rank $n$ on the filtered vector space $A_d(n)$ of Jacobi diagrams of degree $d$ on $n$ oriented arcs.
  This action induces on the associated graded vector space of $A_d(n)$, which is identified with the space $B_d(n)$ of open Jacobi diagrams, an action of the general linear group $\GL(n,\Z)$ and an action of the graded Lie algebra of the IA-automorphism group of $F_n$ associated with its lower central series.
  We use these actions on $B_d(n)$ to study the $\Aut(F_n)$-module structure of $A_d(n)$.
  In particular, we consider the case where $d=2$ in detail and give an indecomposable decomposition of $A_2(n)$.
  We also construct a polynomial functor $A_d$ of degree $2d$ from the opposite category of the category of finitely generated free groups to the category of filtered vector spaces, which includes the $\Aut(F_n)$-module structure of $A_d(n)$ for all $n\ge 0$.
\end{abstract}
\keywords{Jacobi diagrams, Automorphism groups of free groups, General linear groups, IA-automorphism groups of free groups}
\subjclass[2020]{20F12, 20F28, 57K16}

\maketitle
\setcounter{tocdepth}{1}
\tableofcontents

\section{Introduction} \label{intro}
 The \emph{Kontsevich integral} is a universal finite type invariant for links \cite{Kontsevich, BN_v}, which unifies all quantum invariants of links.
 The Kontsevich integral takes values in the space of \emph{Jacobi diagrams}, which are uni-trivalent graphs encoding the algebraic structures of Lie algebras and their representations.
 
 \emph{String links} and \emph{bottom tangles} are special kinds of tangles in a cube consisting of finitely many arc components.
 Since any links can be obtained by closing string links or bottom tangles, it is natural to consider the Kontsevich integral for string links \cite{HL,BN_sl} and bottom tangles \cite{Habiro_b}.

 The Kontsevich integral for $n$-component bottom tangles takes values in a vector space $A(n)$ of Jacobi diagrams on $n$ oriented arcs over a field $\K$ of characteristic $0$. 
 The degree $d$ part of $A(n)$, denoted by $A_d(n)$, has a filtration whose associated graded vector space is isomorphic to a $\K$-vector space $B_d(n)$ of \emph{open Jacobi diagrams} of degree $d$ colored by an element of an $n$-dimensional $\K$-vector space.
 
 Bar-Natan \cite{BN} studied an action of the symmetric group on the space of open Jacobi diagrams colored by distinct integers and computed irreducible decompositions for small degrees.
 
 Habiro and Massuyeau \cite{HM_k} extended the Kontsevich integral to construct a functor from the category of bottom tangles in handlebodies to the category of Jacobi diagrams in handlebodies. By using the restriction of this functor to the degree $0$ part, we construct a functor $A_d$ from the opposite category of the category of finitely generated free groups to the category of filtered vector spaces. 
 By restricting the functor $A_d$ to the automorphism group $\Aut(F_n)$ of the free group $F_n$ of rank $n$, we obtain an action of $\Aut(F_n)$ on the space $A_d(n)$.

 The $\Aut(F_n)$-action on $A_d(n)$ induces an action of the general linear group $\GL(n;\Z)$ on the space $B_d(n)$, which is an extension of the action of the symmetric group considered by Bar-Natan.
 Geometrically, the $\Aut(F_n)$-action on $A_d(n)$ can be interpreted as a restriction of an action of the handlebody group of genus $n$ on the set of $n$-component bottom tangles.

 The aim of the present paper is to give a way of studying the $\Aut(F_n)$-module structure of $A_d(n)$ and the functor $A_d$ and to study the case where $d=2$.
 The action of $\Aut(F_n)$ on $A_d(n)$ induces an action on $B_d(n)$ of the graded Lie algebra $\gr(\IA(n))$ of the \emph{IA-automorphism group} $\IA(n)$ of $F_n$ associated with its lower central series.
 We use an irreducible decomposition of the $\GL(n;\Z)$-module $B_d(n)$ and the $\gr(\IA(n))$-action on $B_d(n)$ to study the $\Aut(F_n)$-module structure of $A_d(n)$.
 In particular, we give an indecomposable decomposition of the $\Aut(F_n)$-module $A_2(n)$ and of the functor $A_2$.
 
 \subsection{The space $A_d(n)$ of Jacobi diagrams}\label{ss11}
  We work over a fixed field $\K$ of characteristic $0$. We consider here the $\K$-vector space $A_d(n)$ of Jacobi diagrams on oriented arcs, which is the main object of the present paper.

  For $n\geq 0$, let $X_n=\centre{
\begingroup%
  \makeatletter%
  \providecommand\color[2][]{%
    \errmessage{(Inkscape) Color is used for the text in Inkscape, but the package 'color.sty' is not loaded}%
    \renewcommand\color[2][]{}%
  }%
  \providecommand\transparent[1]{%
    \errmessage{(Inkscape) Transparency is used (non-zero) for the text in Inkscape, but the package 'transparent.sty' is not loaded}%
    \renewcommand\transparent[1]{}%
  }%
  \providecommand\rotatebox[2]{#2}%
  \newcommand*\fsize{\dimexpr\f@size pt\relax}%
  \newcommand*\lineheight[1]{\fontsize{\fsize}{#1\fsize}\selectfont}%
  \ifx\svgwidth\undefined%
    \setlength{\unitlength}{68.96467443bp}%
    \ifx\svgscale\undefined%
      \relax%
    \else%
      \setlength{\unitlength}{\unitlength * \real{\svgscale}}%
    \fi%
  \else%
    \setlength{\unitlength}{\svgwidth}%
  \fi%
  \global\let\svgwidth\undefined%
  \global\let\svgscale\undefined%
  \makeatother%
  \begin{picture}(1,0.35281587)%
    \lineheight{1}%
    \setlength\tabcolsep{0pt}%
    \put(0,0){\includegraphics[width=\unitlength,page=1]{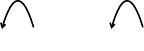}}%
    \put(0.09653969,0.00511188){\makebox(0,0)[lt]{\lineheight{1.45000005}\smash{\begin{tabular}[t]{l}$1$\end{tabular}}}}%
    \put(0.85441458,0.00521592){\makebox(0,0)[lt]{\lineheight{1.45000005}\smash{\begin{tabular}[t]{l}$n$\end{tabular}}}}%
    \put(0.5629133,0.22591732){\makebox(0,0)[lt]{\lineheight{1.45000005}\smash{\begin{tabular}[t]{l}$\cdots$\end{tabular}}}}%
    \put(0,0){\includegraphics[width=\unitlength,page=2]{X_n.pdf}}%
    \put(0.37031179,0.00700557){\makebox(0,0)[lt]{\lineheight{1.45000005}\smash{\begin{tabular}[t]{l}$2$\end{tabular}}}}%
  \end{picture}%
\endgroup%
}$ denote the oriented $1$-manifold consisting of $n$ arc components.
  The $\K$-vector space $A_d(n)$ is spanned by Jacobi diagrams on $X_n$ of degree $d$ modulo the STU relations. Here the degree of Jacobi diagrams is defined to be half the number of vertices as usual.
  (See Section \ref{ssJacobi} for further details.)

  We consider a filtration for $A_d(n)$
  $$
    A_d(n)=A_{d,0}(n)\supset A_{d,1}(n)\supset\cdots\supset A_{d,2d-2}(n)\supset A_{d,2d-1}(n)=0,
  $$
  such that $A_{d,k}(n)\subset A_d(n)$ is the subspace spanned by Jacobi diagrams with at least $k$ trivalent vertices. Hence, $A_d(n)$ is a filtered vector space.
  For example,
  $$\centre{
\begingroup%
  \makeatletter%
  \providecommand\color[2][]{%
    \errmessage{(Inkscape) Color is used for the text in Inkscape, but the package 'color.sty' is not loaded}%
    \renewcommand\color[2][]{}%
  }%
  \providecommand\transparent[1]{%
    \errmessage{(Inkscape) Transparency is used (non-zero) for the text in Inkscape, but the package 'transparent.sty' is not loaded}%
    \renewcommand\transparent[1]{}%
  }%
  \providecommand\rotatebox[2]{#2}%
  \newcommand*\fsize{\dimexpr\f@size pt\relax}%
  \newcommand*\lineheight[1]{\fontsize{\fsize}{#1\fsize}\selectfont}%
  \ifx\svgwidth\undefined%
    \setlength{\unitlength}{68.60125996bp}%
    \ifx\svgscale\undefined%
      \relax%
    \else%
      \setlength{\unitlength}{\unitlength * \real{\svgscale}}%
    \fi%
  \else%
    \setlength{\unitlength}{\svgwidth}%
  \fi%
  \global\let\svgwidth\undefined%
  \global\let\svgscale\undefined%
  \makeatother%
  \begin{picture}(1,0.41107595)%
    \lineheight{1}%
    \setlength\tabcolsep{0pt}%
    \put(0,0){\includegraphics[width=\unitlength,page=1]{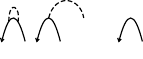}}%
    \put(0.07488336,0.00385417){\makebox(0,0)[lt]{\lineheight{1.45000005}\smash{\begin{tabular}[t]{l}$1$\end{tabular}}}}%
    \put(0.31867711,0.00385421){\makebox(0,0)[lt]{\lineheight{1.45000005}\smash{\begin{tabular}[t]{l}$2$\end{tabular}}}}%
    \put(0.8904487,0.00385417){\makebox(0,0)[lt]{\lineheight{1.45000005}\smash{\begin{tabular}[t]{l}$n$\end{tabular}}}}%
    \put(0.65704317,0.1800465){\makebox(0,0)[lt]{\lineheight{1.45000005}\smash{\begin{tabular}[t]{l}$\cdots$\end{tabular}}}}%
    \put(0,0){\includegraphics[width=\unitlength,page=2]{A20m.pdf}}%
    \put(0.56247076,0.00385417){\makebox(0,0)[lt]{\lineheight{1.45000005}\smash{\begin{tabular}[t]{l}$3$\end{tabular}}}}%
  \end{picture}%
\endgroup%
}\:\in A_{2,0}(n),\:\centre{
\begingroup%
  \makeatletter%
  \providecommand\color[2][]{%
    \errmessage{(Inkscape) Color is used for the text in Inkscape, but the package 'color.sty' is not loaded}%
    \renewcommand\color[2][]{}%
  }%
  \providecommand\transparent[1]{%
    \errmessage{(Inkscape) Transparency is used (non-zero) for the text in Inkscape, but the package 'transparent.sty' is not loaded}%
    \renewcommand\transparent[1]{}%
  }%
  \providecommand\rotatebox[2]{#2}%
  \newcommand*\fsize{\dimexpr\f@size pt\relax}%
  \newcommand*\lineheight[1]{\fontsize{\fsize}{#1\fsize}\selectfont}%
  \ifx\svgwidth\undefined%
    \setlength{\unitlength}{68.60125996bp}%
    \ifx\svgscale\undefined%
      \relax%
    \else%
      \setlength{\unitlength}{\unitlength * \real{\svgscale}}%
    \fi%
  \else%
    \setlength{\unitlength}{\svgwidth}%
  \fi%
  \global\let\svgwidth\undefined%
  \global\let\svgscale\undefined%
  \makeatother%
  \begin{picture}(1,0.41524619)%
    \lineheight{1}%
    \setlength\tabcolsep{0pt}%
    \put(0,0){\includegraphics[width=\unitlength,page=1]{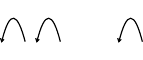}}%
    \put(0.07311277,0.00661912){\makebox(0,0)[lt]{\lineheight{1.45000005}\smash{\begin{tabular}[t]{l}$1$\end{tabular}}}}%
    \put(0.31986033,0.00385417){\makebox(0,0)[lt]{\lineheight{1.45000005}\smash{\begin{tabular}[t]{l}$2$\end{tabular}}}}%
    \put(0.89018137,0.00792008){\makebox(0,0)[lt]{\lineheight{1.45000005}\smash{\begin{tabular}[t]{l}$n$\end{tabular}}}}%
    \put(0.66907828,0.17933233){\makebox(0,0)[lt]{\lineheight{1.45000005}\smash{\begin{tabular}[t]{l}$\cdots$\end{tabular}}}}%
    \put(0,0){\includegraphics[width=\unitlength,page=2]{A21m.pdf}}%
    \put(0.56326776,0.00514217){\makebox(0,0)[lt]{\lineheight{1.45000005}\smash{\begin{tabular}[t]{l}$3$\end{tabular}}}}%
    \put(0,0){\includegraphics[width=\unitlength,page=3]{A21m.pdf}}%
  \end{picture}%
\endgroup%
}\:\in A_{2,1}(n),\:\centre{
\begingroup%
  \makeatletter%
  \providecommand\color[2][]{%
    \errmessage{(Inkscape) Color is used for the text in Inkscape, but the package 'color.sty' is not loaded}%
    \renewcommand\color[2][]{}%
  }%
  \providecommand\transparent[1]{%
    \errmessage{(Inkscape) Transparency is used (non-zero) for the text in Inkscape, but the package 'transparent.sty' is not loaded}%
    \renewcommand\transparent[1]{}%
  }%
  \providecommand\rotatebox[2]{#2}%
  \newcommand*\fsize{\dimexpr\f@size pt\relax}%
  \newcommand*\lineheight[1]{\fontsize{\fsize}{#1\fsize}\selectfont}%
  \ifx\svgwidth\undefined%
    \setlength{\unitlength}{51.72732402bp}%
    \ifx\svgscale\undefined%
      \relax%
    \else%
      \setlength{\unitlength}{\unitlength * \real{\svgscale}}%
    \fi%
  \else%
    \setlength{\unitlength}{\svgwidth}%
  \fi%
  \global\let\svgwidth\undefined%
  \global\let\svgscale\undefined%
  \makeatother%
  \begin{picture}(1,0.57767929)%
    \lineheight{1}%
    \setlength\tabcolsep{0pt}%
    \put(0,0){\includegraphics[width=\unitlength,page=1]{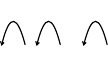}}%
    \put(0.09578764,0.00681115){\makebox(0,0)[lt]{\lineheight{1.45000005}\smash{\begin{tabular}[t]{l}$1$\end{tabular}}}}%
    \put(0.42303183,0.00655209){\makebox(0,0)[lt]{\lineheight{1.45000005}\smash{\begin{tabular}[t]{l}$2$\end{tabular}}}}%
    \put(0.8531435,0.00511151){\makebox(0,0)[lt]{\lineheight{1.45000005}\smash{\begin{tabular}[t]{l}$n$\end{tabular}}}}%
    \put(0.55282075,0.23017457){\makebox(0,0)[lt]{\lineheight{1.45000005}\smash{\begin{tabular}[t]{l}$\cdots$\end{tabular}}}}%
    \put(0,0){\includegraphics[width=\unitlength,page=2]{A22m.pdf}}%
  \end{picture}%
\endgroup%
}\:\in A_{2,2}(n).$$

 \subsection{A functor $A_d$ and an $\Aut(F_n)$-action on $A_d(n)$}\label{ss12}
  We construct a functor
  $$A_d:\F^{\op}\rightarrow \fVect$$
  from the opposite category $\F^{\op}$ of the category $\F$ of finitely generated free groups to the category $\fVect$ of filtered vector spaces over $\K$, which maps $F_n$ to the filtered vector space $A_d(n)$.

  \begin{proposition}[see Proposition \ref{poly}]\label{polynomial}
  The functor $A_d$ is a polynomial functor of degree $2d$.
  \end{proposition}
  Polynomial degrees measure complexity of functors. 
  Eilenberg and Mac Lane \cite{EM} introduced polynomial functors on finitely generated free modules over a ring in the study of the homology of the Eilenberg--MacLane spaces.
  Hartl--Pirashvili--Vespa \cite{HPV} considered polynomial functors on finitely generated free groups.
  Proposition \ref{polynomial} gives a new family of polynomial functors on finitely generated free groups.
  
  The functor $A_d$ gives a map
  $$\Hom(F_m,F_n)\times A_d(n)\rightarrow A_d(m)$$
  for $m,n\geq 0$. (See Section \ref{ss32} for the definition.)
  For example, for an element $f\in \Hom(F_2,F_3)$ defined by $f(x_1)=x_1x_2$, $f(x_2)=x_2x_3$, we have
  $$
    f \cdot \centre{
\begingroup%
  \makeatletter%
  \providecommand\color[2][]{%
    \errmessage{(Inkscape) Color is used for the text in Inkscape, but the package 'color.sty' is not loaded}%
    \renewcommand\color[2][]{}%
  }%
  \providecommand\transparent[1]{%
    \errmessage{(Inkscape) Transparency is used (non-zero) for the text in Inkscape, but the package 'transparent.sty' is not loaded}%
    \renewcommand\transparent[1]{}%
  }%
  \providecommand\rotatebox[2]{#2}%
  \newcommand*\fsize{\dimexpr\f@size pt\relax}%
  \newcommand*\lineheight[1]{\fontsize{\fsize}{#1\fsize}\selectfont}%
  \ifx\svgwidth\undefined%
    \setlength{\unitlength}{53.96467632bp}%
    \ifx\svgscale\undefined%
      \relax%
    \else%
      \setlength{\unitlength}{\unitlength * \real{\svgscale}}%
    \fi%
  \else%
    \setlength{\unitlength}{\svgwidth}%
  \fi%
  \global\let\svgwidth\undefined%
  \global\let\svgscale\undefined%
  \makeatother%
  \begin{picture}(1,0.57691996)%
    \lineheight{1}%
    \setlength\tabcolsep{0pt}%
    \put(0,0){\includegraphics[width=\unitlength,page=1]{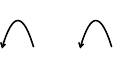}}%
    \put(0.1329116,0.00653278){\makebox(0,0)[lt]{\lineheight{1.45000005}\smash{\begin{tabular}[t]{l}$1$\end{tabular}}}}%
    \put(0.8307539,0.00786389){\makebox(0,0)[lt]{\lineheight{1.45000005}\smash{\begin{tabular}[t]{l}$3$\end{tabular}}}}%
    \put(0,0){\includegraphics[width=\unitlength,page=2]{A23intro.pdf}}%
    \put(0.47672691,0.0083499){\makebox(0,0)[lt]{\lineheight{1.45000005}\smash{\begin{tabular}[t]{l}$2$\end{tabular}}}}%
    \put(0,0){\includegraphics[width=\unitlength,page=3]{A23intro.pdf}}%
  \end{picture}%
\endgroup%
}=\centre{
\begingroup%
  \makeatletter%
  \providecommand\color[2][]{%
    \errmessage{(Inkscape) Color is used for the text in Inkscape, but the package 'color.sty' is not loaded}%
    \renewcommand\color[2][]{}%
  }%
  \providecommand\transparent[1]{%
    \errmessage{(Inkscape) Transparency is used (non-zero) for the text in Inkscape, but the package 'transparent.sty' is not loaded}%
    \renewcommand\transparent[1]{}%
  }%
  \providecommand\rotatebox[2]{#2}%
  \newcommand*\fsize{\dimexpr\f@size pt\relax}%
  \newcommand*\lineheight[1]{\fontsize{\fsize}{#1\fsize}\selectfont}%
  \ifx\svgwidth\undefined%
    \setlength{\unitlength}{37.27560949bp}%
    \ifx\svgscale\undefined%
      \relax%
    \else%
      \setlength{\unitlength}{\unitlength * \real{\svgscale}}%
    \fi%
  \else%
    \setlength{\unitlength}{\svgwidth}%
  \fi%
  \global\let\svgwidth\undefined%
  \global\let\svgscale\undefined%
  \makeatother%
  \begin{picture}(1,1.13232422)%
    \lineheight{1}%
    \setlength\tabcolsep{0pt}%
    \put(0,0){\includegraphics[width=\unitlength,page=1]{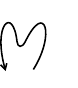}}%
    \put(0.22030661,0.00945764){\makebox(0,0)[lt]{\lineheight{1.45000005}\smash{\begin{tabular}[t]{l}$1$\end{tabular}}}}%
    \put(0,0){\includegraphics[width=\unitlength,page=2]{A22intro.pdf}}%
    \put(0.7233165,0.0131489){\makebox(0,0)[lt]{\lineheight{1.45000005}\smash{\begin{tabular}[t]{l}$2$\end{tabular}}}}%
    \put(0,0){\includegraphics[width=\unitlength,page=3]{A22intro.pdf}}%
  \end{picture}%
\endgroup%
}
    =\centre{
\begingroup%
  \makeatletter%
  \providecommand\color[2][]{%
    \errmessage{(Inkscape) Color is used for the text in Inkscape, but the package 'color.sty' is not loaded}%
    \renewcommand\color[2][]{}%
  }%
  \providecommand\transparent[1]{%
    \errmessage{(Inkscape) Transparency is used (non-zero) for the text in Inkscape, but the package 'transparent.sty' is not loaded}%
    \renewcommand\transparent[1]{}%
  }%
  \providecommand\rotatebox[2]{#2}%
  \newcommand*\fsize{\dimexpr\f@size pt\relax}%
  \newcommand*\lineheight[1]{\fontsize{\fsize}{#1\fsize}\selectfont}%
  \ifx\svgwidth\undefined%
    \setlength{\unitlength}{35.21547439bp}%
    \ifx\svgscale\undefined%
      \relax%
    \else%
      \setlength{\unitlength}{\unitlength * \real{\svgscale}}%
    \fi%
  \else%
    \setlength{\unitlength}{\svgwidth}%
  \fi%
  \global\let\svgwidth\undefined%
  \global\let\svgscale\undefined%
  \makeatother%
  \begin{picture}(1,0.97039155)%
    \lineheight{1}%
    \setlength\tabcolsep{0pt}%
    \put(0,0){\includegraphics[width=\unitlength,page=1]{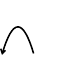}}%
    \put(0.20367556,0.01001092){\makebox(0,0)[lt]{\lineheight{1.45000005}\smash{\begin{tabular}[t]{l}$1$\end{tabular}}}}%
    \put(0,0){\includegraphics[width=\unitlength,page=2]{A24intro.pdf}}%
    \put(0.73054287,0.01279551){\makebox(0,0)[lt]{\lineheight{1.45000005}\smash{\begin{tabular}[t]{l}$2$\end{tabular}}}}%
    \put(0,0){\includegraphics[width=\unitlength,page=3]{A24intro.pdf}}%
  \end{picture}%
\endgroup%
}+\centre{
\begingroup%
  \makeatletter%
  \providecommand\color[2][]{%
    \errmessage{(Inkscape) Color is used for the text in Inkscape, but the package 'color.sty' is not loaded}%
    \renewcommand\color[2][]{}%
  }%
  \providecommand\transparent[1]{%
    \errmessage{(Inkscape) Transparency is used (non-zero) for the text in Inkscape, but the package 'transparent.sty' is not loaded}%
    \renewcommand\transparent[1]{}%
  }%
  \providecommand\rotatebox[2]{#2}%
  \newcommand*\fsize{\dimexpr\f@size pt\relax}%
  \newcommand*\lineheight[1]{\fontsize{\fsize}{#1\fsize}\selectfont}%
  \ifx\svgwidth\undefined%
    \setlength{\unitlength}{35.21547439bp}%
    \ifx\svgscale\undefined%
      \relax%
    \else%
      \setlength{\unitlength}{\unitlength * \real{\svgscale}}%
    \fi%
  \else%
    \setlength{\unitlength}{\svgwidth}%
  \fi%
  \global\let\svgwidth\undefined%
  \global\let\svgscale\undefined%
  \makeatother%
  \begin{picture}(1,0.97039155)%
    \lineheight{1}%
    \setlength\tabcolsep{0pt}%
    \put(0,0){\includegraphics[width=\unitlength,page=1]{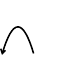}}%
    \put(0.20367556,0.01001092){\makebox(0,0)[lt]{\lineheight{1.45000005}\smash{\begin{tabular}[t]{l}$1$\end{tabular}}}}%
    \put(0,0){\includegraphics[width=\unitlength,page=2]{A25intro.pdf}}%
    \put(0.73054287,0.01279551){\makebox(0,0)[lt]{\lineheight{1.45000005}\smash{\begin{tabular}[t]{l}$2$\end{tabular}}}}%
    \put(0,0){\includegraphics[width=\unitlength,page=3]{A25intro.pdf}}%
  \end{picture}%
\endgroup%
}+\centre{
\begingroup%
  \makeatletter%
  \providecommand\color[2][]{%
    \errmessage{(Inkscape) Color is used for the text in Inkscape, but the package 'color.sty' is not loaded}%
    \renewcommand\color[2][]{}%
  }%
  \providecommand\transparent[1]{%
    \errmessage{(Inkscape) Transparency is used (non-zero) for the text in Inkscape, but the package 'transparent.sty' is not loaded}%
    \renewcommand\transparent[1]{}%
  }%
  \providecommand\rotatebox[2]{#2}%
  \newcommand*\fsize{\dimexpr\f@size pt\relax}%
  \newcommand*\lineheight[1]{\fontsize{\fsize}{#1\fsize}\selectfont}%
  \ifx\svgwidth\undefined%
    \setlength{\unitlength}{35.21547439bp}%
    \ifx\svgscale\undefined%
      \relax%
    \else%
      \setlength{\unitlength}{\unitlength * \real{\svgscale}}%
    \fi%
  \else%
    \setlength{\unitlength}{\svgwidth}%
  \fi%
  \global\let\svgwidth\undefined%
  \global\let\svgscale\undefined%
  \makeatother%
  \begin{picture}(1,0.88408007)%
    \lineheight{1}%
    \setlength\tabcolsep{0pt}%
    \put(0,0){\includegraphics[width=\unitlength,page=1]{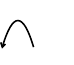}}%
    \put(0.20367556,0.01001092){\makebox(0,0)[lt]{\lineheight{1.45000005}\smash{\begin{tabular}[t]{l}$1$\end{tabular}}}}%
    \put(0,0){\includegraphics[width=\unitlength,page=2]{A26intro.pdf}}%
    \put(0.73054287,0.01279551){\makebox(0,0)[lt]{\lineheight{1.45000005}\smash{\begin{tabular}[t]{l}$2$\end{tabular}}}}%
    \put(0,0){\includegraphics[width=\unitlength,page=3]{A26intro.pdf}}%
  \end{picture}%
\endgroup%
}+\centre{
\begingroup%
  \makeatletter%
  \providecommand\color[2][]{%
    \errmessage{(Inkscape) Color is used for the text in Inkscape, but the package 'color.sty' is not loaded}%
    \renewcommand\color[2][]{}%
  }%
  \providecommand\transparent[1]{%
    \errmessage{(Inkscape) Transparency is used (non-zero) for the text in Inkscape, but the package 'transparent.sty' is not loaded}%
    \renewcommand\transparent[1]{}%
  }%
  \providecommand\rotatebox[2]{#2}%
  \newcommand*\fsize{\dimexpr\f@size pt\relax}%
  \newcommand*\lineheight[1]{\fontsize{\fsize}{#1\fsize}\selectfont}%
  \ifx\svgwidth\undefined%
    \setlength{\unitlength}{35.21547439bp}%
    \ifx\svgscale\undefined%
      \relax%
    \else%
      \setlength{\unitlength}{\unitlength * \real{\svgscale}}%
    \fi%
  \else%
    \setlength{\unitlength}{\svgwidth}%
  \fi%
  \global\let\svgwidth\undefined%
  \global\let\svgscale\undefined%
  \makeatother%
  \begin{picture}(1,0.88408007)%
    \lineheight{1}%
    \setlength\tabcolsep{0pt}%
    \put(0,0){\includegraphics[width=\unitlength,page=1]{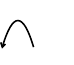}}%
    \put(0.20367556,0.01001092){\makebox(0,0)[lt]{\lineheight{1.45000005}\smash{\begin{tabular}[t]{l}$1$\end{tabular}}}}%
    \put(0,0){\includegraphics[width=\unitlength,page=2]{A27intro.pdf}}%
    \put(0.73054287,0.01279551){\makebox(0,0)[lt]{\lineheight{1.45000005}\smash{\begin{tabular}[t]{l}$2$\end{tabular}}}}%
    \put(0,0){\includegraphics[width=\unitlength,page=3]{A27intro.pdf}}%
  \end{picture}%
\endgroup%
}.
  $$

  When $n=m$, we have an action of the opposite $\End(F_n)^{\op}$ of the endomorphism monoid $\End(F_n)$ on $A_d(n)$
  $$\End(F_n)^{\op}\times A_d(n)\rightarrow A_d(n).$$
  By restricting this action to the opposite $\Aut(F_n)^{\op}$ of the automorphism group $\Aut(F_n)$, we obtain a right action of $\Aut(F_n)$ on $A_d(n)$.
  In fact, we have the following.
  \begin{theorem}[see Theorem \ref{out}]
   The $\Aut(F_n)$-action on $A_d(n)$ induces an action on $A_d(n)$ of the outer automorphism group $\Out(F_n)$ of $F_n$. 
  \end{theorem}
  
 \subsection{A functor $B_d$ and a $\GL(n;\Z)$-action on $B_d(n)$}\label{ss13}
  The associated graded vector space $\gr(A_d(n))$ of the filtered vector space $A_d(n)$ can be identified via the PBW map \cite{BN_v, BN_sl} with the vector space $B_d(n)$ of colored open Jacobi diagrams, which we explain below. The $\Aut(F_n)$-action on $A_d(n)$ induces an action of $\GL(n;\Z)$ on $B_d(n)$.
  
  For $n\geq 0$, let $V_n=\bigoplus_{i=1}^n\K v_i$ be an $n$-dimensional $\K$-vector space.
  The $\K$-vector space $B_d(n)$ is spanned by $V_n$-colored open Jacobi diagrams of degree $d$ modulo the AS, IHX and multilinearity relations, where ``$V_n$-colored'' means that each univalent vertex is colored by an element of $V_n$.
  (See Section \ref{ss33} for further details.)
  We consider a grading for $B_d(n)$ such that the degree $k$ part $B_{d,k}(n)\subset B_d(n)$ is spanned by open Jacobi diagrams with exactly $k$ trivalent vertices.
  For example,
  $$\centre{
\begingroup%
  \makeatletter%
  \providecommand\color[2][]{%
    \errmessage{(Inkscape) Color is used for the text in Inkscape, but the package 'color.sty' is not loaded}%
    \renewcommand\color[2][]{}%
  }%
  \providecommand\transparent[1]{%
    \errmessage{(Inkscape) Transparency is used (non-zero) for the text in Inkscape, but the package 'transparent.sty' is not loaded}%
    \renewcommand\transparent[1]{}%
  }%
  \providecommand\rotatebox[2]{#2}%
  \newcommand*\fsize{\dimexpr\f@size pt\relax}%
  \newcommand*\lineheight[1]{\fontsize{\fsize}{#1\fsize}\selectfont}%
  \ifx\svgwidth\undefined%
    \setlength{\unitlength}{75.19064665bp}%
    \ifx\svgscale\undefined%
      \relax%
    \else%
      \setlength{\unitlength}{\unitlength * \real{\svgscale}}%
    \fi%
  \else%
    \setlength{\unitlength}{\svgwidth}%
  \fi%
  \global\let\svgwidth\undefined%
  \global\let\svgscale\undefined%
  \makeatother%
  \begin{picture}(1,0.03050625)%
    \lineheight{1}%
    \setlength\tabcolsep{0pt}%
    \put(0,0){\includegraphics[width=\unitlength,page=1]{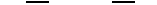}}%
    \put(-0.00201311,0.00468861){\makebox(0,0)[lt]{\lineheight{1.45000005}\smash{\begin{tabular}[t]{l}$v_1$\end{tabular}}}}%
    \put(0.54281882,0.00931457){\makebox(0,0)[lt]{\lineheight{1.45000005}\smash{\begin{tabular}[t]{l}$v_2$\end{tabular}}}}%
    \put(0.9183845,0.01003744){\makebox(0,0)[lt]{\lineheight{1.45000005}\smash{\begin{tabular}[t]{l}$v_3$\end{tabular}}}}%
    \put(0.37673274,0.01003737){\makebox(0,0)[lt]{\lineheight{1.45000005}\smash{\begin{tabular}[t]{l}$v_1$\end{tabular}}}}%
  \end{picture}%
\endgroup%
}\;\in B_{2,0}(n),\;\centre{
\begingroup%
  \makeatletter%
  \providecommand\color[2][]{%
    \errmessage{(Inkscape) Color is used for the text in Inkscape, but the package 'color.sty' is not loaded}%
    \renewcommand\color[2][]{}%
  }%
  \providecommand\transparent[1]{%
    \errmessage{(Inkscape) Transparency is used (non-zero) for the text in Inkscape, but the package 'transparent.sty' is not loaded}%
    \renewcommand\transparent[1]{}%
  }%
  \providecommand\rotatebox[2]{#2}%
  \newcommand*\fsize{\dimexpr\f@size pt\relax}%
  \newcommand*\lineheight[1]{\fontsize{\fsize}{#1\fsize}\selectfont}%
  \ifx\svgwidth\undefined%
    \setlength{\unitlength}{37.2932625bp}%
    \ifx\svgscale\undefined%
      \relax%
    \else%
      \setlength{\unitlength}{\unitlength * \real{\svgscale}}%
    \fi%
  \else%
    \setlength{\unitlength}{\svgwidth}%
  \fi%
  \global\let\svgwidth\undefined%
  \global\let\svgscale\undefined%
  \makeatother%
  \begin{picture}(1,0.72197429)%
    \lineheight{1}%
    \setlength\tabcolsep{0pt}%
    \put(-0.00405884,0.00945316){\makebox(0,0)[lt]{\lineheight{1.45000005}\smash{\begin{tabular}[t]{l}$v_1$\end{tabular}}}}%
    \put(0.40665432,0.01638878){\makebox(0,0)[lt]{\lineheight{1.45000005}\smash{\begin{tabular}[t]{l}$v_2$\end{tabular}}}}%
    \put(0.83544689,0.01214915){\makebox(0,0)[lt]{\lineheight{1.45000005}\smash{\begin{tabular}[t]{l}$v_3$\end{tabular}}}}%
    \put(0,0){\includegraphics[width=\unitlength,page=1]{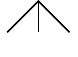}}%
  \end{picture}%
\endgroup%
}\;\in B_{2,1}(n),\;\centre{
\begingroup%
  \makeatletter%
  \providecommand\color[2][]{%
    \errmessage{(Inkscape) Color is used for the text in Inkscape, but the package 'color.sty' is not loaded}%
    \renewcommand\color[2][]{}%
  }%
  \providecommand\transparent[1]{%
    \errmessage{(Inkscape) Transparency is used (non-zero) for the text in Inkscape, but the package 'transparent.sty' is not loaded}%
    \renewcommand\transparent[1]{}%
  }%
  \providecommand\rotatebox[2]{#2}%
  \newcommand*\fsize{\dimexpr\f@size pt\relax}%
  \newcommand*\lineheight[1]{\fontsize{\fsize}{#1\fsize}\selectfont}%
  \ifx\svgwidth\undefined%
    \setlength{\unitlength}{43.60679155bp}%
    \ifx\svgscale\undefined%
      \relax%
    \else%
      \setlength{\unitlength}{\unitlength * \real{\svgscale}}%
    \fi%
  \else%
    \setlength{\unitlength}{\svgwidth}%
  \fi%
  \global\let\svgwidth\undefined%
  \global\let\svgscale\undefined%
  \makeatother%
  \begin{picture}(1,0.22361611)%
    \lineheight{1}%
    \setlength\tabcolsep{0pt}%
    \put(-0.00347119,0.09597589){\makebox(0,0)[lt]{\lineheight{1.45000005}\smash{\begin{tabular}[t]{l}$v_1$\end{tabular}}}}%
    \put(0.85927141,0.09518567){\makebox(0,0)[lt]{\lineheight{1.45000005}\smash{\begin{tabular}[t]{l}$v_2$\end{tabular}}}}%
    \put(0,0){\includegraphics[width=\unitlength,page=1]{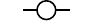}}%
  \end{picture}%
\endgroup%
}\;\in B_{2,2}(n).$$
  
  We construct a functor
  $$B_d:\FAb^{\op}\rightarrow \gVect$$
  from the opposite category $\FAb^{\op}$ of the category $\FAb$ of finitely generated free abelian groups to the category $\gVect$ of graded vector spaces over $\K$, which maps each object $\Z^n$ of $\FAb^{\op}$ to the graded vector space $B_d(n)$.
  Here, we have a map
  $$
    \Hom(\Z^m,\Z^n)\times B_d(n)\rightarrow B_d(m)
  $$
  for $m,n\geq 0$, which is given by matrix multiplication on each coloring.
  (See Section \ref{ss33} for the definition.)

  By restricting this map to the opposite group $\GL(n;\Z)^{\op}$ of $\GL(n;\Z)$, we obtain a right action of $\GL(n;\Z)$ on $B_d(n)$.
  Moreover, the $\GL(n;\Z)$-action on $B_d(n)$ extends to an action of $\GL(n;\K)\cong \GL(V_n)$ on $B_d(n)$.
  
  Let $D^c_{d,k}$ be the $\K$-vector space spanned by \emph{connected} open Jacobi diagrams of degree $d$ with $2d-k$ univalent vertices each of which is colored by a different element of $\{1,\cdots,2d-k\}$.
  The symmetric group $\gpS_{2d-k}$ acts on the space $D^c_{d,k}$. Bar-Natan \cite{BN} computed the dimensions and irreducible decompositions of the $\gpS_{2d-k}$-modules $D^c_{d,k}$ for $d\le 7$.
  By using irreducible decompositions of $D^c_{d,k}$, we obtain an irreducible decomposition of the $\GL(n;\Z)$-module $B_d(n)$, which we use to study the $\Aut(F_n)$-module structure of $A_d(n)$.

  Recall that for any partition $\lambda$ of $N\geq 0$ with at most $n$ rows, the Schur functor $\repS_{\lambda}$ gives a simple $\GL(V_n)$-module $\repS_{\lambda}V_n=V_n^{\otimes N}\cdot c_{\lambda}$, where $c_{\lambda}\in \K\gpS_N$ is the Young symmetrizer corresponding to $\lambda$.
  \begin{proposition}[see Propositions \ref{B1} and \ref{prop531}]
   We have irreducible decompositions of $\GL(V_n)$-modules
  \begin{gather}\label{eqB1}
      B_1(n)=B_{1,0}(n)\cong \repS_{(2)}V_n,
  \end{gather}
  \begin{gather}\label{eqB2}
    \begin{split}
      B_2(n)&=B_{2,0}(n)\oplus B_{2,1}(n)\oplus B_{2,2}(n)\\
      &\cong (\repS_{(4)}V_n\oplus \repS_{(2,2)}V_n)\oplus \repS_{(1,1,1)}V_n\oplus \repS_{(2)}V_n.
    \end{split}
  \end{gather}
  \end{proposition}

  We observe that the functor $A_d$ induces the functor $B_d$.
  Let $\ab:\F\rightarrow \FAb$ denote the abelianization functor,
  and $\ab^{\op}:\F^{\op}\rightarrow \FAb^{\op}$ its opposite functor.
  Let $\gr:\fVect\rightarrow \gVect$ denote the functor that sends a filtered vector space to its associated graded vector space.
  \begin{proposition}[see Proposition \ref{prop341}]\label{proptheta}
   For $d\geq 0$, there is a natural isomorphism $$\theta_d:\gr\circ A_d\overset{\cong}\Rightarrow B_d\circ\ab^\op.$$
  \end{proposition}

 \subsection{The functor $A_1$}\label{ss14}
  Here we consider the functors $A_1$ and $B_1$.
  By Proposition \ref{proptheta}, we have $A_1\cong B_1\circ \ab^{\op}$.
  Since we have isomorphisms of $\Aut(F_n)$-modules
  $$A_1(n)\cong B_1(n)\cong \repS_{(2)}V_n$$
  by (\ref{eqB1}), the $\Aut(F_n)$-module $A_1(n)$ is simple for any $n\geq 1$.
  It follows that the functor $A_1$ is indecomposable.

 \subsection{An action of $\gr(\IA(n))$ on the space $B_d(n)$}\label{ss15}
  Let $\IA(n)$ denote the IA-automorphism group of $F_n$, which is the kernel of the canonical homomorphism $\Aut(F_n)\rightarrow \Aut(H_1(F_n;\Z))\cong \GL(n;\Z)$.
  Let $\Gamma_{\ast}(\IA(n))=(\Gamma_{r}(\IA(n)))_{r\geq 1}$ denote the lower central series of $\IA(n)$, and
  $\gr(\IA(n))=\bigoplus_{r\geq 1}\opegr^r(\IA(n))$ the associated graded Lie algebra, where $\opegr^r(\IA(n))=\Gamma_{r}(\IA(n))/\Gamma_{r+1}(\IA(n))$.

  To study the $\Aut(F_n)$-module structure of $A_d(n)$, we use a right action of $\gr(\IA(n))$ on the graded vector space $B_d(n)\cong \gr(A_d(n))$.

  \begin{theorem}[see Proposition \ref{p612} and Theorem \ref{th531}]
    There is an action of the graded Lie algebra $\gr(\IA(n))$ on the graded vector space $B_{d}(n)$, which consists of $\GL(n;\Z)$-module homomorphisms
    $$B_{d,k}(n)\otimes_{\Z} \opegr^r(\IA(n)) \rightarrow B_{d,k+r}(n)$$
    for $k\geq 0$ and $r\geq 1$.
  \end{theorem}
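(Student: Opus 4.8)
The plan is to derive the $\gr(\IA(n))$-action from the already-constructed $\Aut(F_n)$-action on the filtered space $A_d(n)$ together with the identification $\gr(A_d(n))\cong B_d(n)$ of Proposition \ref{proptheta}. First I would recall that an element $\sigma\in\IA(n)$ acts on $A_d(n)$, and since $\sigma$ acts trivially on $H_1(F_n;\Z)$, the induced action on the associated graded $\gr(A_d(n))=B_d(n)$ is trivial in grading degree $0$; more precisely, one checks that for $\varphi\in\Gamma_r(\IA(n))$ the operator $\varphi-\id$ raises the filtration degree on $A_d(n)$ by at least $r$. This is the crucial \emph{shift} estimate: the $\IA$-automorphism group is defined by $f(x_i)\equiv x_i$ modulo $\Gamma_2(F_n)$, and an element of $\Gamma_r(\IA(n))$ satisfies $f(x_i)\equiv x_i$ modulo a deeper term, so that when one substitutes $f$ into a Jacobi diagram the leading correction terms have strictly more trivalent vertices. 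I would prove this by working with the explicit formula for the $A_d$-action from Section \ref{ss32} (substitution of group-like elements / the STU-expansion of the handlebody action), tracking the number of trivalent vertices created by each commutator in $f(x_i)x_i^{-1}$.

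Granting the shift estimate, the construction is formal. For $\varphi\in\Gamma_r(\IA(n))$ I would define the operator on $B_{d,k}(n)$ to be the degree-$r$ leading term of $\varphi-\id$ acting on $A_{d,k}(n)$, i.e. the composite $A_{d,k}(n)\xrightarrow{\varphi-\id}A_{d,k+r}(n)\twoheadrightarrow \gr^{k+r}A_d(n)=B_{d,k+r}(n)$, which descends to $\gr^k A_d(n)=B_{d,k}(n)$ precisely because $\varphi-\id$ raises degree by at least $r$. Standard commutator identities in the associated graded of a filtered algebra action then show: (i) this assignment kills $\Gamma_{r+1}(\IA(n))$, hence factors through $\opegr^r(\IA(n))$; (ii) it is additive, so gives a linear map $B_{d,k}(n)\otimes_\Z\opegr^r(\IA(n))\to B_{d,k+r}(n)$; and (iii) the bracket $[\varphi,\psi]$ in $\gr(\IA(n))$ maps to the commutator of the corresponding operators, so we get an action of the graded Lie algebra $\gr(\IA(n))$. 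Finally, equivariance under $\GL(n;\Z)$ follows because $\Aut(F_n)$ acts on $A_d(n)$ by filtered maps, $\IA(n)$ is normal in $\Aut(F_n)$, the conjugation action of $\Aut(F_n)$ on $\gr(\IA(n))$ factors through $\GL(n;\Z)$, and passing to associated graded intertwines these actions; this exactly says each $B_{d,k}(n)\otimes\opegr^r(\IA(n))\to B_{d,k+r}(n)$ is $\GL(n;\Z)$-linear.

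The main obstacle is establishing the shift estimate with the correct bound, namely that $\varphi\in\Gamma_r(\IA(n))$ shifts the trivalent-vertex filtration on $A_d(n)$ by exactly $r$ (not merely by $1$). This requires a careful analysis of the $A_d$-functor's formula: substituting $f(x_i)=x_i c_i$ with $c_i\in\Gamma_{r+1}(F_n)$ into a Jacobi diagram, expanding via STU relations, and counting how commutator depth in $c_i$ translates into extra trivalent vertices. I expect this to follow from the compatibility of the Kontsevich integral / the functor $Z$ with the lower central series filtrations (the "Johnson-type" filtration arguments), but it is the step that needs genuine work rather than formal nonsense; everything after it is the routine associated-graded-of-a-filtered-action formalism sketched above.
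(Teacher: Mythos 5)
Your overall architecture---prove a shift estimate for $\Gamma_r(\IA(n))$ on the trivalent-vertex filtration of $A_d(n)$, then run the associated-graded formalism to get additivity, the factoring through $\opegr^r(\IA(n))$, compatibility with the Lie bracket, and $\GL(n;\Z)$-equivariance from normality of $\IA(n)$ in $\Aut(F_n)$---is correct, and the formal second half coincides with the paper's Lemma \ref{l612}, Propositions \ref{p611} and \ref{p612}, and the extended N-series formalism of Proposition \ref{p621}. Where you genuinely diverge is in how the shift estimate is obtained. You propose to use that $\varphi\in\Gamma_r(\IA(n))$ satisfies $\varphi(x_i)x_i^{-1}\in\Gamma_{r+1}(F_n)$ (i.e.\ to pass through the Johnson filtration) and then to count, via the STU expansion of the substitution, how commutator depth in $\varphi(x_i)x_i^{-1}$ forces extra trivalent vertices. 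The paper avoids this entirely: the case $r=1$ is essentially free, because Proposition \ref{prop341} shows the induced action on $\gr(A_d(n))\cong B_d(n)$ factors through $\GL(n;\Z)$, so $\IA(n)$ acts trivially on the associated graded and hence $[A_{d,k}(n),\IA(n)]\subset A_{d,k+1}(n)$ (Lemma \ref{l611}; alternatively a one-diagram check on the normal generator $K_{2,1}$ of Theorem \ref{f611}); the general case (Lemma \ref{l613}) is then a purely formal induction on $r$ via the identity $[u,[g,h]]=([[u,g],h]-[[u,h],g])\cdot g^{-1}h^{-1}$, with Lemma \ref{l612} handling products and inverses, and no further diagram counting is needed. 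Your route is viable but strictly harder: the diagrammatic estimate you flag as ``the step that needs genuine work'' is essentially the statement that the Johnson filtration of $\End(F_n)$ acts compatibly on the filtered space $A_d(n)$, which this paper deliberately defers to the sequel (Section \ref{s7}); in exchange it would prove more, since the Johnson filtration contains the lower central series of $\IA(n)$, possibly properly. If you only want the theorem as stated, replace your hard shift estimate by the two-line observation for $r=1$ plus the commutator induction.
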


 \subsection{$\Aut(F_n)$-module structure of $A_2(n)$ and indecomposable decomposition of $A_2$}\label{ss17}
  Here, we consider the right $\Aut(F_n)$-module structure of $A_2(n)$ and give an indecomposable decomposition of the functor $A_2$.
  
  We use the graphical notation $\centre{
\begingroup%
  \makeatletter%
  \providecommand\color[2][]{%
    \errmessage{(Inkscape) Color is used for the text in Inkscape, but the package 'color.sty' is not loaded}%
    \renewcommand\color[2][]{}%
  }%
  \providecommand\transparent[1]{%
    \errmessage{(Inkscape) Transparency is used (non-zero) for the text in Inkscape, but the package 'transparent.sty' is not loaded}%
    \renewcommand\transparent[1]{}%
  }%
  \providecommand\rotatebox[2]{#2}%
  \newcommand*\fsize{\dimexpr\f@size pt\relax}%
  \newcommand*\lineheight[1]{\fontsize{\fsize}{#1\fsize}\selectfont}%
  \ifx\svgwidth\undefined%
    \setlength{\unitlength}{15.72738029bp}%
    \ifx\svgscale\undefined%
      \relax%
    \else%
      \setlength{\unitlength}{\unitlength * \real{\svgscale}}%
    \fi%
  \else%
    \setlength{\unitlength}{\svgwidth}%
  \fi%
  \global\let\svgwidth\undefined%
  \global\let\svgscale\undefined%
  \makeatother%
  \begin{picture}(1,1.05203066)%
    \lineheight{1}%
    \setlength\tabcolsep{0pt}%
    \put(0,0){\includegraphics[width=\unitlength,page=1]{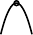}}%
  \end{picture}%
\endgroup%
}=\centre{
\begingroup%
  \makeatletter%
  \providecommand\color[2][]{%
    \errmessage{(Inkscape) Color is used for the text in Inkscape, but the package 'color.sty' is not loaded}%
    \renewcommand\color[2][]{}%
  }%
  \providecommand\transparent[1]{%
    \errmessage{(Inkscape) Transparency is used (non-zero) for the text in Inkscape, but the package 'transparent.sty' is not loaded}%
    \renewcommand\transparent[1]{}%
  }%
  \providecommand\rotatebox[2]{#2}%
  \newcommand*\fsize{\dimexpr\f@size pt\relax}%
  \newcommand*\lineheight[1]{\fontsize{\fsize}{#1\fsize}\selectfont}%
  \ifx\svgwidth\undefined%
    \setlength{\unitlength}{23.975989bp}%
    \ifx\svgscale\undefined%
      \relax%
    \else%
      \setlength{\unitlength}{\unitlength * \real{\svgscale}}%
    \fi%
  \else%
    \setlength{\unitlength}{\svgwidth}%
  \fi%
  \global\let\svgwidth\undefined%
  \global\let\svgscale\undefined%
  \makeatother%
  \begin{picture}(1,1.08854031)%
    \lineheight{1}%
    \setlength\tabcolsep{0pt}%
    \put(0,0){\includegraphics[width=\unitlength,page=1]{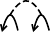}}%
    \put(0.14029882,0.01470385){\makebox(0,0)[lt]{\lineheight{1.45000005}\smash{\begin{tabular}[t]{l}$1$\end{tabular}}}}%
    \put(0.76480238,0.01778309){\makebox(0,0)[lt]{\lineheight{1.45000005}\smash{\begin{tabular}[t]{l}$2$\end{tabular}}}}%
  \end{picture}%
\endgroup%
}\in A_1(2)$.
  Set
  $$P'=\centre{
\begingroup%
  \makeatletter%
  \providecommand\color[2][]{%
    \errmessage{(Inkscape) Color is used for the text in Inkscape, but the package 'color.sty' is not loaded}%
    \renewcommand\color[2][]{}%
  }%
  \providecommand\transparent[1]{%
    \errmessage{(Inkscape) Transparency is used (non-zero) for the text in Inkscape, but the package 'transparent.sty' is not loaded}%
    \renewcommand\transparent[1]{}%
  }%
  \providecommand\rotatebox[2]{#2}%
  \newcommand*\fsize{\dimexpr\f@size pt\relax}%
  \newcommand*\lineheight[1]{\fontsize{\fsize}{#1\fsize}\selectfont}%
  \ifx\svgwidth\undefined%
    \setlength{\unitlength}{30.75117737bp}%
    \ifx\svgscale\undefined%
      \relax%
    \else%
      \setlength{\unitlength}{\unitlength * \real{\svgscale}}%
    \fi%
  \else%
    \setlength{\unitlength}{\svgwidth}%
  \fi%
  \global\let\svgwidth\undefined%
  \global\let\svgscale\undefined%
  \makeatother%
  \begin{picture}(1,0.89721676)%
    \lineheight{1}%
    \setlength\tabcolsep{0pt}%
    \put(0,0){\includegraphics[width=\unitlength,page=1]{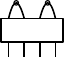}}%
    \put(0.20732822,0.36583965){\makebox(0,0)[lt]{\lineheight{1.45000005}\smash{\begin{tabular}[t]{l}$sym_4$\end{tabular}}}}%
  \end{picture}%
\endgroup%
},\quad P''=\centre{
\begingroup%
  \makeatletter%
  \providecommand\color[2][]{%
    \errmessage{(Inkscape) Color is used for the text in Inkscape, but the package 'color.sty' is not loaded}%
    \renewcommand\color[2][]{}%
  }%
  \providecommand\transparent[1]{%
    \errmessage{(Inkscape) Transparency is used (non-zero) for the text in Inkscape, but the package 'transparent.sty' is not loaded}%
    \renewcommand\transparent[1]{}%
  }%
  \providecommand\rotatebox[2]{#2}%
  \newcommand*\fsize{\dimexpr\f@size pt\relax}%
  \newcommand*\lineheight[1]{\fontsize{\fsize}{#1\fsize}\selectfont}%
  \ifx\svgwidth\undefined%
    \setlength{\unitlength}{42.75117945bp}%
    \ifx\svgscale\undefined%
      \relax%
    \else%
      \setlength{\unitlength}{\unitlength * \real{\svgscale}}%
    \fi%
  \else%
    \setlength{\unitlength}{\svgwidth}%
  \fi%
  \global\let\svgwidth\undefined%
  \global\let\svgscale\undefined%
  \makeatother%
  \begin{picture}(1,0.73701733)%
    \lineheight{1}%
    \setlength\tabcolsep{0pt}%
    \put(0,0){\includegraphics[width=\unitlength,page=1]{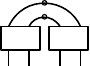}}%
    \put(0.02565245,0.24009286){\makebox(0,0)[lt]{\lineheight{1.45000005}\smash{\begin{tabular}[t]{l}$alt_2$\end{tabular}}}}%
    \put(0.57684974,0.24009286){\makebox(0,0)[lt]{\lineheight{1.45000005}\smash{\begin{tabular}[t]{l}$alt_2$\end{tabular}}}}%
  \end{picture}%
\endgroup%
}\in A_2(4),$$
  where $\sym_4$ corresponds to the Young symmetrizer $c_{(4)}$ and $\alt_2$ corresponds to the Young symmetrizer $c_{(1,1)}$. (See Section \ref{ss62} for further details.)
  Let
  $$A_2', A_2'':\F^\op\rightarrow\fVect$$
  be the subfunctors of the functor $A_2$ such that
  \begin{gather*}
      A_2'(n):=\Span_\K\{A_2(f)(P'):f\in\F^\op(4,n)\}\subset A_2(n),\\
      A_2''(n):=\Span_\K\{A_2(f)(P''):f\in\F^\op(4,n)\}\subset A_2(n).
  \end{gather*}

  We use the $\GL(V_n)$-module structure \eqref{eqB2} of $B_2(n)$ and the $\gr(\IA(n))$-action on $B_2(n)$ to study the $\Aut(F_n)$-module structure of $A_2(n)$.

  \begin{theorem}[see Proposition \ref{prop1021} and Theorem \ref{th1042}]\label{thintro}
   For $n\geq 3$, we have an indecomposable decomposition of $\Aut(F_n)$-modules
   $$
     A_2(n)=A_2'(n)\oplus A_2''(n).
   $$
   Here, $A_2'(n)$ is simple, and $A_2''(n)$ admits a unique composition series of length $3$
   $$A_2''(n)\supsetneq A_{2,1}(n)\supsetneq A_{2,2}(n)\supsetneq 0;$$
   that is, $A_2''(n)$ has no nonzero proper $\Aut(F_n)$-submodules other than $A_{2,1}(n)$ and $A_{2,2}(n)$.
   (For $n=1,2$, see Theorem \ref{th1042}.)
  \end{theorem}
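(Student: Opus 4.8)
The plan is to reduce the statement for $n\ge 3$ to two non-vanishing computations for the degree-one part of the $\gr(\IA(n))$-action on $B_2''(n)$; everything else is formal, using the semisimplicity of the graded layers together with Jordan--H\"older and Krull--Schmidt bookkeeping. The cases $n=1,2$, where $\repS_{(1,1,1)}V_n$ (and for $n=1$ also $\repS_{(2,2)}V_n$) degenerates to $0$, are handled by the same scheme with the shortened layer structure and are deferred to Theorem \ref{th1042}.

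\emph{The splitting and the summand $A_2'(n)$.} The decomposition $A_2(n)=A_2'(n)\oplus A_2''(n)$ as $\Aut(F_n)$-modules is immediate from Proposition \ref{prop1021}, since $A_2'$ and $A_2''$ are subfunctors of $A_2$ on $\F^{\op}$, hence $A_2'(n),A_2''(n)$ are $\Aut(F_n)$-submodules. For $A_2'(n)$: since $\gr A_2'(n)=B_2'(n)=B_{2,0}'(n)$ is concentrated in filtration degree $0$, we get $A_{2,1}'(n)=0$; as $\IA(n)$ raises the filtration degree by at least $1$ (Proposition \ref{p612}), $\IA(n)$ acts trivially on $A_2'(n)$, so the $\Aut(F_n)$-action factors through $\Aut(F_n)\twoheadrightarrow\GL(n;\Z)$ and $A_2'(n)\cong B_2'(n)\cong\repS_{(4)}V_n$ by the decompositions recalled above (cf.\ \eqref{eqB2}). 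Since $\K$ has characteristic $0$ and a polynomial simple $\GL(n;\K)$-module restricts to a simple $\GL(n;\Z)$-module (Zariski density of $\GL(n;\Z)$), $A_2'(n)$ is a simple $\Aut(F_n)$-module.

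\emph{The filtration layers of $A_2''(n)$.} Because $A_2'(n)$ has trivial filtration, $A_{2,k}(n)\subseteq A_2''(n)$ for $k\ge 1$, and the filtration by number of trivalent vertices gives a chain of $\Aut(F_n)$-submodules
\[
A_2''(n)\supseteq A_{2,1}(n)\supseteq A_{2,2}(n)\supseteq 0
\]
with successive quotients $B_{2,0}''(n)\cong\repS_{(2,2)}V_n$, $B_{2,1}(n)\cong\repS_{(1,1,1)}V_n$, $B_{2,2}(n)\cong\repS_{(2)}V_n$. On each layer $\IA(n)$ acts trivially, so each layer is simple as an $\Aut(F_n)$-module by the density argument above; for $n\ge 3$ these three simples are pairwise non-isomorphic and all nonzero. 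Hence $A_2''(n)$ has length $3$ with these distinct composition factors, and $A_{2,2}(n)=B_{2,2}(n)$ is itself simple.

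\emph{The two computations and the submodule lattice.} The crux is the explicit computation (Section \ref{ss62}; cf.\ Proposition \ref{p612} and Corollary \ref{corgraction}) that the degree-one action maps
\[
B_{2,1}(n)\otimes_{\Z}\opegr^1(\IA(n))\to B_{2,2}(n)\qquad\text{and}\qquad B_{2,0}''(n)\otimes_{\Z}\opegr^1(\IA(n))\to B_{2,1}(n)
\]
are both nonzero, hence surjective since their targets are simple. Surjectivity of the first forces the extension $0\to A_{2,2}(n)\to A_{2,1}(n)\to\repS_{(1,1,1)}V_n\to 0$ to be non-split over $\Aut(F_n)$: a splitting would produce an $\IA(n)$-trivial lift of $B_{2,1}(n)$, making that action map vanish. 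Likewise surjectivity of the second forces non-splitness of $0\to A_{2,1}(n)\to A_2''(n)\to\repS_{(2,2)}V_n\to 0$ and of $0\to\repS_{(1,1,1)}V_n\to A_2''(n)/A_{2,2}(n)\to\repS_{(2,2)}V_n\to 0$. A short case analysis on the multiset of composition factors of a hypothetical proper nonzero $\Aut(F_n)$-submodule $M\subseteq A_2''(n)$ — using that the three composition factors are distinct, so that any of the possibilities $\{\repS_{(1,1,1)}\}$, $\{\repS_{(2,2)}\}$, $\{\repS_{(2,2)},\repS_{(2)}\}$, $\{\repS_{(2,2)},\repS_{(1,1,1)}\}$ either splits one of the three extensions or produces a submodule isomorphic to $\repS_{(1,1,1)}V_n$ (excluded, since that would split the first extension) — shows that the only nonzero proper $\Aut(F_n)$-submodules are $A_{2,1}(n)$ and $A_{2,2}(n)$. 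Thus $A_2''(n)$ has the asserted unique composition series and is indecomposable, and $A_2(n)=A_2'(n)\oplus A_2''(n)$ is the indecomposable decomposition (unique by Krull--Schmidt).

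\emph{Main obstacle.} The only non-formal input is the non-vanishing of the two degree-one $\gr(\IA(n))$-action maps displayed above. Concretely one must evaluate the action of the Magnus generators of $\IA(n)$ (equivalently, suitable elements of $\opegr^1(\IA(n))$, i.e.\ the image of the first Johnson homomorphism) on explicit open Jacobi diagrams spanning $B_{2,1}(n)$ and $B_{2,0}''(n)$, and verify via the STU, AS and IHX relations that the outputs are nonzero in $B_{2,2}(n)$ and $B_{2,1}(n)$ respectively; this diagrammatic computation is where the real work lies, and it is what ties the $\gr(\IA(n))$-module structure of $B_2''(n)$ to the $\Aut(F_n)$-module structure of $A_2''(n)$.
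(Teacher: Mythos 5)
Your proposal is correct and follows essentially the same route as the paper: the same splitting $A_2=A_2'\oplus A_2''$ with the same identification of the graded layers as simple $\GL(n;\Z)$-modules, and the same key non-formal input, namely the non-vanishing of the two degree-one bracket maps $B_{2,0}''(n)\otimes\opegr^1(\IA(n))\to B_{2,1}(n)$ and $B_{2,1}(n)\otimes\opegr^1(\IA(n))\to B_{2,2}(n)$ (the paper's Lemma \ref{prop1031}, proved there by evaluating $K_{3,1,2}$ and $K_{1,3,2}$ on explicit diagrams), which you correctly isolate as the crux but do not carry out. The only difference is the endgame: the paper picks an element $a=u+v$ of a hypothetical submodule and brackets it directly, using the injectivity of $\rho_1,\rho_2$ (equivalent to non-vanishing by simplicity of the sources), whereas you repackage the same information as non-splitness of three extensions followed by a Jordan--H\"{o}lder case analysis on composition factors --- an equivalent bookkeeping.
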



  By using Theorem \ref{thintro}, we obtain an indecomposable decomposition of the functor $A_2$.
  \begin{theorem}[see Proposition \ref{prop1021} and Theorem \ref{th1041}]
   We have an indecomposable decomposition
    $$A_2=A_2'\oplus A_2''$$
    in the functor category $\fVect^{\F^{\op}}$.
  \end{theorem}
  In the subsequent paper \cite{Mai}, we will study the case where $d\geq 3$ to obtain an indecomposable decomposition and the radical filtration of $A_d(n)$. For $d\ge 3$, it is rather difficult to compute the $\gr(\IA(n))$-action on $B_d(n)$ directly. In order to simplify computation of the $\gr(\IA(n))$-action on $B_d(n)$, we will reconstruct the action in a different way.
  We will also study the \emph{Johnson filtration} $\jfE_{\ast}(n)$ of the endomorphism monoid $\End(F_n)$, which is an enlargement of the \emph{Johnson filtration} $\jfA_{\ast}(n)$ of $\Aut(F_n)$ and the lower central series $\Gamma_{\ast}(\IA(n))$ of $\IA(n)$.
  We will show that $\jfE_{\ast}(n)$ acts on $A_d(n)$ and therefore the extended N-series $\jfA_{\ast}(n)$ acts on the filtered vector space $A_d(n)$.

 \subsection{Organization of the paper}
  In Section \ref{pre}, we recall some notions and definitions about Jacobi diagrams, open Jacobi diagrams and the category of Jacobi diagrams in handlebodies.
  In Section \ref{s3}, we construct functors $A_d: \F^\op \rightarrow \fVect$ and $B_d: \FAb^\op \rightarrow \gVect$ and observe that $A_d$ induces $B_d$.
  In Section \ref{s4}, we compute the functors $A_1$ and $B_1$ explicitly.
  In Section \ref{s5}, we define an action of the graded Lie algebra $\gr(\IA(n))$ on the graded vector space $B_d(n)$.
  In Section \ref{sBd}, we establish the notation about representations of $\GL(V_n)$ and consider the dimension of the $\K$-vector spaces $A_d(n)$ and $B_d(n)$.
  In Section \ref{s6}, we consider the $\Aut(F_n)$-module structure of $A_2(n)$ and give an indecomposable decomposition of $A_2$.
  In Section \ref{s7}, we consider the polynomiality of the functor $A_d$.
  In Appendix \ref{ss52}, we define an action of an extended N-series on a filtered vector space and an action of an extended graded Lie algebras on a graded vector space.

 \subsection{Acknowledgments}
  The author would like to thank Kazuo Habiro for careful reading and valuable advice,
  and Christine Vespa for letting us know that our functor $A_d$ is a polynomial functor and some relations between our study and their paper \cite{POWELLVESPA}.
  She also thanks Gw\'{e}na\"{e}l Massuyeau, Takefumi Nosaka and Sakie Suzuki for helpful comments.

\section{Preliminaries}\label{pre}
 In this section, we recall some notions of Jacobi diagrams and open Jacobi diagrams and the category $\A$ of Jacobi diagrams in handlebodies. In what follows, we work over a fixed field $\K$ of characteristic $0$.

 \subsection{Jacobi diagrams and open Jacobi diagrams}\label{ssJacobi}
  In this section, we recall Jacobi diagrams and open Jacobi diagrams defined in \cite{BN_v}, \cite{BN_sl} and \cite{Ohtsuki}.

  A \emph{uni-trivalent graph} is a finite graph whose vertices are either univalent or trivalent. A trivalent vertex is \emph{oriented} if it has a fixed cyclic order of the three edges around it. A \emph{vertex-oriented} uni-trivalent graph is a uni-trivalent graph such that each trivalent vertex is oriented.

  For $n\geq 0$, let $X_n$ be the oriented $1$-manifold consisting of $n$ arc components as depicted in Figure \ref{Fig1}.
  \begin{figure}[h]
\begingroup%
  \makeatletter%
  \providecommand\color[2][]{%
    \errmessage{(Inkscape) Color is used for the text in Inkscape, but the package 'color.sty' is not loaded}%
    \renewcommand\color[2][]{}%
  }%
  \providecommand\transparent[1]{%
    \errmessage{(Inkscape) Transparency is used (non-zero) for the text in Inkscape, but the package 'transparent.sty' is not loaded}%
    \renewcommand\transparent[1]{}%
  }%
  \providecommand\rotatebox[2]{#2}%
  \newcommand*\fsize{\dimexpr\f@size pt\relax}%
  \newcommand*\lineheight[1]{\fontsize{\fsize}{#1\fsize}\selectfont}%
  \ifx\svgwidth\undefined%
    \setlength{\unitlength}{154.99468248bp}%
    \ifx\svgscale\undefined%
      \relax%
    \else%
      \setlength{\unitlength}{\unitlength * \real{\svgscale}}%
    \fi%
  \else%
    \setlength{\unitlength}{\svgwidth}%
  \fi%
  \global\let\svgwidth\undefined%
  \global\let\svgscale\undefined%
  \makeatother%
  \begin{picture}(1,0.24739893)%
    \lineheight{1}%
    \setlength\tabcolsep{0pt}%
    \put(0,0){\includegraphics[width=\unitlength,page=1]{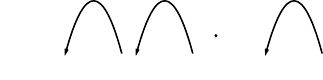}}%
    \put(0.26212831,0.00818771){\color[rgb]{0,0,0}\makebox(0,0)[lt]{\lineheight{1.25}\smash{\begin{tabular}[t]{l}$1$\\\end{tabular}}}}%
    \put(0.47577776,0.00825793){\color[rgb]{0,0,0}\makebox(0,0)[lt]{\lineheight{1.25}\smash{\begin{tabular}[t]{l}$2$\\\end{tabular}}}}%
    \put(0.88300992,0.01056712){\color[rgb]{0,0,0}\makebox(0,0)[lt]{\lineheight{1.25}\smash{\begin{tabular}[t]{l}$n$\\\end{tabular}}}}%
    \put(-0.00314193,0.12330668){\makebox(0,0)[lt]{\lineheight{1.45000005}\smash{\begin{tabular}[t]{l}$X_n=$\end{tabular}}}}%
    \put(0,0){\includegraphics[width=\unitlength,page=2]{X_m.pdf}}%
  \end{picture}%
\endgroup%

      \caption{The oriented $1$-manifold $X_n$.}
      \label{Fig1}
  \end{figure}

  A \emph{Jacobi diagram} on $X_n$ is a vertex-oriented uni-trivalent graph such that univalent vertices are embedded into the interior of $X_n$ and each connected component has at least one univalent vertex.
  Two Jacobi diagrams $D$ and $D'$ on $X_n$ are regarded as the same if there is a homeomorphism $f:D\cup X_n\rightarrow D'\cup X_n$ whose restriction to $X_n$ is isotopic to the identity map of $X_n$.
  In figures, we depict $X_n$ as solid lines and Jacobi diagrams as dashed lines in such a way that each trivalent vertex is oriented in the counterclockwise order.

  Let $\flA(X_n)$ denote the $\K$-vector space spanned by Jacobi diagrams on $X_n$ modulo the STU relation, which is described in Figure \ref{Fig2}.
  \begin{figure}[ht]
\begingroup%
  \makeatletter%
  \providecommand\color[2][]{%
    \errmessage{(Inkscape) Color is used for the text in Inkscape, but the package 'color.sty' is not loaded}%
    \renewcommand\color[2][]{}%
  }%
  \providecommand\transparent[1]{%
    \errmessage{(Inkscape) Transparency is used (non-zero) for the text in Inkscape, but the package 'transparent.sty' is not loaded}%
    \renewcommand\transparent[1]{}%
  }%
  \providecommand\rotatebox[2]{#2}%
  \newcommand*\fsize{\dimexpr\f@size pt\relax}%
  \newcommand*\lineheight[1]{\fontsize{\fsize}{#1\fsize}\selectfont}%
  \ifx\svgwidth\undefined%
    \setlength{\unitlength}{248.32821418bp}%
    \ifx\svgscale\undefined%
      \relax%
    \else%
      \setlength{\unitlength}{\unitlength * \real{\svgscale}}%
    \fi%
  \else%
    \setlength{\unitlength}{\svgwidth}%
  \fi%
  \global\let\svgwidth\undefined%
  \global\let\svgscale\undefined%
  \makeatother%
  \begin{picture}(1,0.17216977)%
    \lineheight{1}%
    \setlength\tabcolsep{0pt}%
    \put(0,0){\includegraphics[width=\unitlength,page=1]{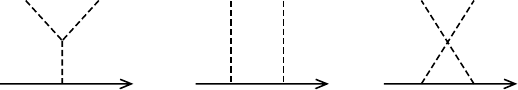}}%
    \put(0.26622516,0.07402791){\color[rgb]{0,0,0}\makebox(0,0)[lt]{\lineheight{1.25}\smash{\begin{tabular}[t]{l}$=$\end{tabular}}}}%
    \put(0.64577693,0.07419581){\color[rgb]{0,0,0}\makebox(0,0)[lt]{\lineheight{1.25}\smash{\begin{tabular}[t]{l}$-$\end{tabular}}}}%
  \end{picture}%
\endgroup%

     \caption{The STU relation.}
     \label{Fig2}
  \end{figure}

  The \emph{degree} of a Jacobi diagram is defined to be half the number of its vertices. Since the STU relation is homogeneous with respect to the degree, we have a grading
  $$
      \flA(X_n)=\bigoplus_{d\geq 0}\flA_d(X_n),
  $$
  where $\flA_d(X_n)\subset \flA(X_n)$ is the subspace spanned by Jacobi diagrams of degree $d$.

  For $k\geq 0$, let $\flA_{d,k}(X_n)\subset \flA_d(X_n)$ be the subspace spanned by Jacobi diagrams with at least $k$ trivalent vertices.
  We have $\flA_0(X_n)=\flA_{0,0}(X_n)\cong\K$ for $d=0$.
  For $d\geq 1$, we have a filtration
  $$
      \flA_d(X_n)=\flA_{d,0}(X_n)\supset \flA_{d,1}(X_n)\supset \flA_{d,2}(X_n)\supset\cdots\supset \flA_{d,2d-1}(X_n)=0.
  $$
  Note that we have $\flA_{d,2d-1}(X_n)=0$ since a Jacobi diagram on $X_n$ with only one univalent vertex vanishes by using the STU relations.
  We consider the graded vector space $\gr(\flA_d(X_n)):=\bigoplus_{k\geq 0} \opegr^k(\flA_d(X_n))$ associated to the above filtration $\flA_{d,\ast}(X_n)$, where $\opegr^k(\flA_d(X_n)):=\flA_{d,k}(X_n)/\flA_{d,k+1}(X_n)$.

  An \emph{open Jacobi diagram} is a vertex-oriented uni-trivalent graph such that each connected component has at least one univalent vertex.

  Let $T$ be a set.
  A \emph{$T$-colored open Jacobi diagram} is an open Jacobi diagram such that each univalent vertex is colored by an element of $T$.
  In figures, we depict $T$-colored open Jacobi diagrams as solid lines in such a way that each trivalent vertex is oriented in the counterclockwise order.

  Let $\flB(T)$ denote the $\K$-vector space spanned by $T$-colored open Jacobi diagrams modulo the AS and IHX relations, which are depicted in Figure \ref{Fig3}.
  \begin{figure}[ht]
\begingroup%
  \makeatletter%
  \providecommand\color[2][]{%
    \errmessage{(Inkscape) Color is used for the text in Inkscape, but the package 'color.sty' is not loaded}%
    \renewcommand\color[2][]{}%
  }%
  \providecommand\transparent[1]{%
    \errmessage{(Inkscape) Transparency is used (non-zero) for the text in Inkscape, but the package 'transparent.sty' is not loaded}%
    \renewcommand\transparent[1]{}%
  }%
  \providecommand\rotatebox[2]{#2}%
  \newcommand*\fsize{\dimexpr\f@size pt\relax}%
  \newcommand*\lineheight[1]{\fontsize{\fsize}{#1\fsize}\selectfont}%
  \ifx\svgwidth\undefined%
    \setlength{\unitlength}{237.99128478bp}%
    \ifx\svgscale\undefined%
      \relax%
    \else%
      \setlength{\unitlength}{\unitlength * \real{\svgscale}}%
    \fi%
  \else%
    \setlength{\unitlength}{\svgwidth}%
  \fi%
  \global\let\svgwidth\undefined%
  \global\let\svgscale\undefined%
  \makeatother%
  \begin{picture}(1,0.16853431)%
    \lineheight{1}%
    \setlength\tabcolsep{0pt}%
    \put(0,0){\includegraphics[width=\unitlength,page=1]{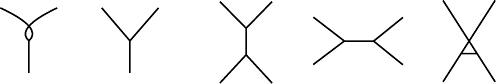}}%
    \put(0.55712434,0.07129419){\color[rgb]{0,0,0}\makebox(0,0)[lt]{\lineheight{1.25}\smash{\begin{tabular}[t]{l}$=$\end{tabular}}}}%
    \put(0.81611019,0.07505923){\color[rgb]{0,0,0}\makebox(0,0)[lt]{\lineheight{1.25}\smash{\begin{tabular}[t]{l}$-$\end{tabular}}}}%
    \put(0.10876647,0.07199609){\color[rgb]{0,0,0}\makebox(0,0)[lt]{\lineheight{1.25}\smash{\begin{tabular}[t]{l}$= -$\end{tabular}}}}%
    \put(0.32432587,0.02060045){\color[rgb]{0,0,0}\makebox(0,0)[lt]{\lineheight{1.25}\smash{\begin{tabular}[t]{l}$,$\\\end{tabular}}}}%
  \end{picture}%
\endgroup%

     \caption{The AS and IHX relations.}
     \label{Fig3}
  \end{figure}

  The \emph{degree} of a $T$-colored open Jacobi diagram is defined to be half the number of vertices. Since the AS and IHX relations are homogeneous with respect to the degree, we have a grading
  $$
      \flB(T)=\bigoplus_{d\geq 0}\flB_d(T),
  $$
  where $\flB_d(T)\subset\flB(T)$ is the subspace spanned by $T$-colored open Jacobi diagrams of degree $d$.

  For $k\geq 0$, let $\flB_{d,k}(T)\subset \flB_d(T)$ be the subspace spanned by open Jacobi diagrams with exactly $k$ trivalent vertices.
  We have $\flB_0(T)=\flB_{0,0}(T)=\K\emptyset$ for $d=0$.
  For $d\geq 1$, we have
  $$
    \flB_d(T)=\bigoplus_{k=0}^{2d-2} \flB_{d,k}(T).
  $$
  Note that $\flB_{d,k}(T)=0$ for $k\geq 2d$ since an open Jacobi diagram has at least one univalent vertex and for $k=2d-1$ since an open Jacobi diagram with only one univalent vertex vanishes by using the AS and IHX relations.

  We consider the case where the coloring set is $[n]:= \{1,\cdots, n\}\subset\N$. Bar-Natan \cite{BN_v, BN_sl} proved that $\flA(X_n)$ is isomorphic to $\flB([n])$.
  This is a diagrammatic interpretation of the Poincar\'{e}--Birkhoff--Witt theorem.
  \begin{proposition}[PBW theorem \cite{BN_v, BN_sl})\label{PBW}]
   For $d\geq 0$, we have an isomorphism of vector spaces
   $$\chi_d: \flB_d([n])\xrightarrow{\cong} \flA_d(X_n).$$
   If $D\in\flB([n])$ is an $[n]$-colored open Jacobi diagram of degree $d$ such that for any $i\in[n]$, $D$ has $k_i$ univalent vertices colored by $i$, then $\chi(D)\in\flA(X_n)$ is the average of the $\prod_{i\in[n]} (k_i)!$ ways of attaching the univalent vertices colored by $i$ to the $i$-th component of $X_n$.

   Moreover, the map $\chi_d$ induces an isomorphism
   $$\chi_{d,k}:\flB_{d,k}([n])\xrightarrow{\cong}\opegr^k(\flA_{d,\ast}(X_n)).$$
  \end{proposition}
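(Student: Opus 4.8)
The plan is to realize $\chi_d$ as the \emph{symmetrization} map, check it is well defined, observe that it is filtered so that it induces the $\chi_{d,k}$ on associated graded, prove surjectivity by a filtration induction, and finally prove injectivity, where the real work lies.

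\textbf{Well-definedness.} Following the formula in the statement, I would set $\chi_d(D)$, for an $[n]$-colored open Jacobi diagram $D$ with $k_i$ univalent vertices of color $i$, to be the average of the $\prod_i(k_i)!$ Jacobi diagrams on $X_n$ obtained by attaching, for each $i$, the color-$i$ vertices to the $i$-th component in all orders; this makes sense because $\operatorname{char}\K=0$. It preserves degree, and because attaching univalent vertices to $X_n$ creates no trivalent vertices, it sends the subspace of $\flB_d([n])$ spanned by diagrams with $\geq k$ trivalent vertices into $\flA_{d,k}(X_n)$. To check $\chi_d$ kills the defining relations of $\flB_d([n])$: for the coloring set $[n]$ these are only AS and IHX (no multilinearity), each a local move on the trivalent part of a diagram away from its univalent vertices; since AS and IHX hold in $\flA(X_n)$ as the usual consequences of STU, $\chi_d$ of an AS (resp. IHX) relation is an average of AS (resp. IHX) relations in $\flA_d(X_n)$, hence $0$.

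\textbf{Associated graded and surjectivity.} The key observation is that if two Jacobi diagrams on $X_n$ differ only by a transposition of two univalent vertices adjacent along a single component, then one STU move shows their difference is a single Jacobi diagram with exactly one more trivalent vertex. Hence, for $D\in\flB_{d,k}([n])$, all $\prod_i(k_i)!$ attachments agree modulo $\flA_{d,k+1}(X_n)$, so $\chi_d(D)$ is, mod $\flA_{d,k+1}(X_n)$, the class of any single attachment; this produces the induced map $\chi_{d,k}\colon\flB_{d,k}([n])\to\opegr^k(\flA_{d,\ast}(X_n))$ and shows $\chi_d$ is filtered with $\opegr(\chi_d)=\bigoplus_k\chi_{d,k}$. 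For surjectivity: given a Jacobi diagram $E$ on $X_n$ with exactly $k$ trivalent vertices and underlying colored open diagram $\bar E\in\flB_{d,k}([n])$, one has $\chi_{d,k}(\bar E)=[E]$ in $\opegr^k$; since $\opegr^k(\flA_{d,\ast}(X_n))$ is spanned by classes of diagrams with exactly $k$ trivalent vertices (those with more lie in $\flA_{d,k+1}(X_n)$), each $\chi_{d,k}$ is onto, and a downward induction on $k$ starting from $\flA_{d,2d-1}(X_n)=0$ gives surjectivity of $\chi_d$.

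\textbf{Injectivity --- the main obstacle.} Because the filtration on $\flA_d(X_n)$ is finite, it is enough to prove each $\chi_{d,k}$ is injective, i.e.\ that $\opegr^k(\flA_{d,\ast}(X_n))$ has ``no more relations'' among $k$-trivalent Jacobi diagrams on $X_n$ than AS, IHX, and reordering of univalent vertices along components. I would do this by constructing a left inverse $\sigma_d\colon\flA_d(X_n)\to\flB_d([n])$. The naive guess $E\mapsto\bar E$ fails, since STU equates a $(k+1)$-trivalent diagram with a difference of two $k$-trivalent diagrams having the \emph{same} underlying colored open diagram; one must add correction terms. The correct $\sigma_d$ sends $E$ to $\bar E$ plus a $\K$-linear combination of diagrams obtained by bracketing consecutive blocks of univalent vertices along $X_n$ into internal trees, with rational coefficients fixed recursively by the demand that $\sigma_d$ annihilate STU (and AS, IHX). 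Proving that such coefficients exist and that $\sigma_d\circ\chi_d=\id$ is the substance of the diagrammatic Poincar\'e--Birkhoff--Witt theorem of Bar-Natan \cite{BN_v,BN_sl}; it is the combinatorial reflection of the fact that the symmetrization map $S(\g)\to U(\g)$ is a coalgebra isomorphism, and $\operatorname{char}\K=0$ is used both in defining $\chi_d$ and in these rational corrections. Granting this, each $\chi_{d,k}$ is an isomorphism; finiteness of the filtration then promotes $\chi_d$ to an isomorphism, and $\opegr(\chi_d)=\bigoplus_k\chi_{d,k}$ records the ``moreover'' statement.
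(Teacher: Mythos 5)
Your proposal is consistent with how the paper itself treats this statement: the paper gives no proof at all, quoting the proposition directly from Bar-Natan, and the only argument it supplies is the remark following the statement --- that any two attachments of the univalent vertices of a $k$-trivalent diagram agree in $\opegr^k(\flA_{d,\ast}(X_n))$ by STU --- which is exactly your ``associated graded'' observation. The parts you actually carry out (well-definedness of the symmetrization map via AS/IHX holding in $\flA(X_n)$, the induced maps $\chi_{d,k}$, surjectivity of each $\chi_{d,k}$ and hence of $\chi_d$ by the finite filtration) are all correct. The one point where your write-up is not a proof is the injectivity: you describe the shape of a left inverse $\sigma_d$ with recursively determined correction terms and then explicitly grant its existence, which is precisely the nontrivial content of Bar-Natan's diagrammatic PBW theorem. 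Since the paper cites that theorem for the entire proposition, deferring there is legitimate, but be aware that as a self-contained argument your proof is incomplete at exactly that step; everything else you wrote stands on its own.
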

   Note that two Jacobi diagrams of $\flA_{d,k}(X_n)$ appearing in the average of the $\prod_{i\in[n]}(k_i)!$ ways are equivalent in the quotient space $\opegr^{k}(\flA_{d,\ast}(X_n))$ by the STU relations. Therefore, the average of the $\prod_{i\in[n]} (k_i)!$ ways of attaching univalent vertices coincides with an arbitrary way of attaching them in $\opegr^{k}(\flA_{d,\ast}(X_n))$.

 \subsection{The category $\A$ of Jacobi diagrams in handlebodies}\label{sscatA}
  Here we briefly review the category $\A$ of Jacobi diagrams in handlebodies defined in \cite{HM_k}.

  The objects in $\A$ are nonnegative integers. To define the hom-set $\A(m,n)$, we need the notion of $(m,n)$-Jacobi diagrams, which we explain below.

  Let $I=[-1,1]$.
  For $m\geq 0$, let $U_m \subset \R^3$ denote the handlebody of genus $m$ that is obtained from the cube $I^3$ by attaching $m$ handles on the top square $I^2 \times \{1\}$ as depicted in Figure \ref{Fig4}.
  We call $l := I \times \{0\} \times \{-1\}$ the \emph{bottom line} of $U_m$.
  We call $S := I^2 \times \{-1\}$ the \emph{bottom square} of $U_m$.
  For $i=1,\cdots, m$, let $x_i$ be a loop which goes through only the $i$-th handle of the handlebody $U_m$ just once and let $x_i$ denote its homotopy class as well. In what follows, for loops $\gamma_1$ and $\gamma_2$ with base points on $l$, let $\gamma_2\gamma_1$ denote the loop that goes through $\gamma_1$ first and then goes through $\gamma_2$. That is, we write a product of elements of the fundamental group of $U_m$ in the opposite order to the usual one.
  Let $\bar{x}_i\in H_1(U_m;\K)$ denote the homology class of $x_i$. We have $\pi_1(U_m)=\langle x_1,\cdots,x_m\rangle$ and $H_1(U_m;\K)=\bigoplus_{i=1}^m \K\bar{x}_i$.
  \begin{figure}[ht]
      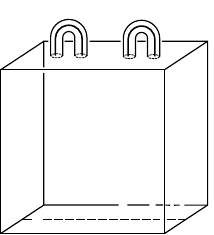
      \caption{The handlebody $U_m$.}
      \label{Fig4}
  \end{figure}

  For $m,n\geq 0$, an \emph{$(m,n)$-Jacobi diagram} $(D,f)$ consists of a Jacobi diagram $D$ on $X_n$ and a map $f: X_n\cup D \rightarrow U_m$ which maps $\partial X_n$ into the bottom line $l$ of $U_m$ in such a way that the endpoints of $X_n$ are uniformly distributed and that for $i=1,\cdots,n$, the $i$-th arc component of $X_n$ goes from the $2i$-th point to the $2i-1$-st point, where we count the endpoints from left to right. In what follows, we simply write $D$ for an $(m,n)$-Jacobi diagram.
  We identify two $(m,n)$-Jacobi diagrams if they are homotopic in $U_m$ relative to the endpoints of $X_n$. Figure \ref{(m,n)-Jacobi diagram} shows a $(2,3)$-Jacobi diagram $D$.
  \begin{figure}[ht]
\begingroup%
  \makeatletter%
  \providecommand\color[2][]{%
    \errmessage{(Inkscape) Color is used for the text in Inkscape, but the package 'color.sty' is not loaded}%
    \renewcommand\color[2][]{}%
  }%
  \providecommand\transparent[1]{%
    \errmessage{(Inkscape) Transparency is used (non-zero) for the text in Inkscape, but the package 'transparent.sty' is not loaded}%
    \renewcommand\transparent[1]{}%
  }%
  \providecommand\rotatebox[2]{#2}%
  \newcommand*\fsize{\dimexpr\f@size pt\relax}%
  \newcommand*\lineheight[1]{\fontsize{\fsize}{#1\fsize}\selectfont}%
  \ifx\svgwidth\undefined%
    \setlength{\unitlength}{283.46456693bp}%
    \ifx\svgscale\undefined%
      \relax%
    \else%
      \setlength{\unitlength}{\unitlength * \real{\svgscale}}%
    \fi%
  \else%
    \setlength{\unitlength}{\svgwidth}%
  \fi%
  \global\let\svgwidth\undefined%
  \global\let\svgscale\undefined%
  \makeatother%
  \begin{picture}(1,0.38)%
    \lineheight{1}%
    \setlength\tabcolsep{0pt}%
    \put(0.14479863,0.15558384){\color[rgb]{0,0,0}\makebox(0,0)[lt]{\lineheight{1.25}\smash{\begin{tabular}[t]{l}$D=$ \\\end{tabular}}}}%
    \put(0,0){\includegraphics[width=\unitlength,page=1]{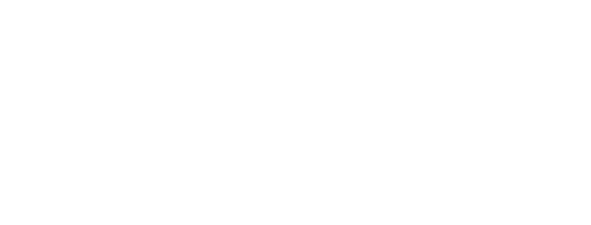}}%
    \put(0.67462822,0.15480127){\makebox(0,0)[lt]{\lineheight{1.45000005}\smash{\begin{tabular}[t]{l}$:2\rightarrow 3$\end{tabular}}}}%
    \put(0,0){\includegraphics[width=\unitlength,page=2]{m,nJac.pdf}}%
  \end{picture}%
\endgroup%

      \caption{A $(2,3)$-Jacobi diagram.}
      \label{(m,n)-Jacobi diagram}
  \end{figure}
  For $m,n\geq 0$, the hom-set $\A(m,n)$ is the $\K$-vector space spanned by $(m,n)$-Jacobi diagrams modulo the STU relations.
  We usually depict $(m,n)$-Jacobi diagrams by drawing their images under the orthogonal projection of $\R^3$ onto $\R\times \{0\}\times \R$.

  In order to define the composition in the category $\A$, we use the \emph{box notation} as depicted in Figure \ref{box}. (See Example 3.2 of \cite{HM_k}.)
  Dashed and solid lines are allowed to go through the box, and a dashed line is attached to the left side of the box. 
  The box notation represents a sum with sign of each Jacobi diagram which is obtained by attaching the univalent vertex of the dashed line to each line which goes through the box.
  The sign of a summand corresponding to a solid line is determined by the compatibility of its orientation with the direction of the box, and the sign of a summand corresponding to a dashed line is determined to be positive.
  We also define the box notation with a dashed line attached to the right side of the box by the box notation with the dashed line attached to the left side of the box as depicted in Figure \ref{box}.
 
  \begin{figure}[ht]
\begingroup%
  \makeatletter%
  \providecommand\color[2][]{%
    \errmessage{(Inkscape) Color is used for the text in Inkscape, but the package 'color.sty' is not loaded}%
    \renewcommand\color[2][]{}%
  }%
  \providecommand\transparent[1]{%
    \errmessage{(Inkscape) Transparency is used (non-zero) for the text in Inkscape, but the package 'transparent.sty' is not loaded}%
    \renewcommand\transparent[1]{}%
  }%
  \providecommand\rotatebox[2]{#2}%
  \newcommand*\fsize{\dimexpr\f@size pt\relax}%
  \newcommand*\lineheight[1]{\fontsize{\fsize}{#1\fsize}\selectfont}%
  \ifx\svgwidth\undefined%
    \setlength{\unitlength}{326.91670308bp}%
    \ifx\svgscale\undefined%
      \relax%
    \else%
      \setlength{\unitlength}{\unitlength * \real{\svgscale}}%
    \fi%
  \else%
    \setlength{\unitlength}{\svgwidth}%
  \fi%
  \global\let\svgwidth\undefined%
  \global\let\svgscale\undefined%
  \makeatother%
  \begin{picture}(1,0.17531172)%
    \lineheight{1}%
    \setlength\tabcolsep{0pt}%
    \put(0,0){\includegraphics[width=\unitlength,page=1]{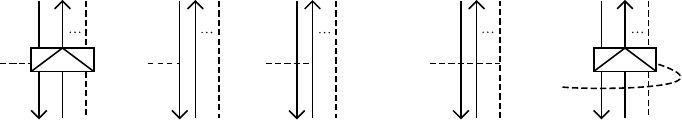}}%
    \put(0.15322383,0.07601594){\makebox(0,0)[lt]{\lineheight{1.45000005}\smash{\begin{tabular}[t]{l}$=-$\end{tabular}}}}%
    \put(0.50346627,0.07579247){\makebox(0,0)[lt]{\lineheight{1.45000005}\smash{\begin{tabular}[t]{l}$\:+\;\cdots\; +$\end{tabular}}}}%
    \put(0.33387567,0.07657436){\makebox(0,0)[lt]{\lineheight{1.45000005}\smash{\begin{tabular}[t]{l}$\:+$\end{tabular}}}}%
    \put(0.76006418,0.07751645){\makebox(0,0)[lt]{\lineheight{1.45000005}\smash{\begin{tabular}[t]{l}$=:$\end{tabular}}}}%
  \end{picture}%
\endgroup%

      \caption{The box notation.}
      \label{box}
  \end{figure}

  For $D:m\rightarrow n$ and $D':p\rightarrow m$, the composition $D\circ D'$ is defined as follows.
  By using isotopies of $U_m$, we can transform $D$ into an $(m,n)$-Jacobi diagram $\ti{D}$ each of whose handle has only solid and dashed lines parallel to the handle core.
  The composition $D\circ D'$ is obtained by stacking on the top of the square part of $\ti{D}$ a suitable cabling of $D'$. Here, the cabling is obtained from $D'$ by replacing each component of $X_m$ with its parallel copies so that the target of the cabling matches the source of $\ti{D}$, and each univalent vertex is replaced by the box notation.
  Figure \ref{composition} shows the composition $D\circ D'$ of the $(2,3)$-Jacobi diagram $D$, which is given in Figure \ref{(m,n)-Jacobi diagram}, and the following $(3,2)$-Jacobi diagram $D'$.
  \begin{figure}[h]
\begingroup%
  \makeatletter%
  \providecommand\color[2][]{%
    \errmessage{(Inkscape) Color is used for the text in Inkscape, but the package 'color.sty' is not loaded}%
    \renewcommand\color[2][]{}%
  }%
  \providecommand\transparent[1]{%
    \errmessage{(Inkscape) Transparency is used (non-zero) for the text in Inkscape, but the package 'transparent.sty' is not loaded}%
    \renewcommand\transparent[1]{}%
  }%
  \providecommand\rotatebox[2]{#2}%
  \newcommand*\fsize{\dimexpr\f@size pt\relax}%
  \newcommand*\lineheight[1]{\fontsize{\fsize}{#1\fsize}\selectfont}%
  \ifx\svgwidth\undefined%
    \setlength{\unitlength}{241.95752342bp}%
    \ifx\svgscale\undefined%
      \relax%
    \else%
      \setlength{\unitlength}{\unitlength * \real{\svgscale}}%
    \fi%
  \else%
    \setlength{\unitlength}{\svgwidth}%
  \fi%
  \global\let\svgwidth\undefined%
  \global\let\svgscale\undefined%
  \makeatother%
  \begin{picture}(1,0.35969966)%
    \lineheight{1}%
    \setlength\tabcolsep{0pt}%
    \put(-0.00313502,0.13070803){\color[rgb]{0,0,0}\makebox(0,0)[lt]{\lineheight{1.25}\smash{\begin{tabular}[t]{l}$D'=$\\\end{tabular}}}}%
    \put(0.48888667,0.121523){\color[rgb]{0,0,0}\makebox(0,0)[lt]{\lineheight{1.25}\smash{\begin{tabular}[t]{l}$D\circ D'=$\end{tabular}}}}%
    \put(0,0){\includegraphics[width=\unitlength,page=1]{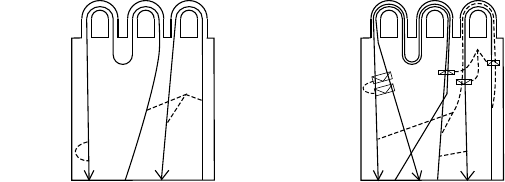}}%
  \end{picture}%
\endgroup%

      \caption{The composition $D\circ D'$.}
      \label{composition}
  \end{figure}

  The identity morphism of an object $n$ is $\centre{
\begingroup%
  \makeatletter%
  \providecommand\color[2][]{%
    \errmessage{(Inkscape) Color is used for the text in Inkscape, but the package 'color.sty' is not loaded}%
    \renewcommand\color[2][]{}%
  }%
  \providecommand\transparent[1]{%
    \errmessage{(Inkscape) Transparency is used (non-zero) for the text in Inkscape, but the package 'transparent.sty' is not loaded}%
    \renewcommand\transparent[1]{}%
  }%
  \providecommand\rotatebox[2]{#2}%
  \newcommand*\fsize{\dimexpr\f@size pt\relax}%
  \newcommand*\lineheight[1]{\fontsize{\fsize}{#1\fsize}\selectfont}%
  \ifx\svgwidth\undefined%
    \setlength{\unitlength}{45.37417238bp}%
    \ifx\svgscale\undefined%
      \relax%
    \else%
      \setlength{\unitlength}{\unitlength * \real{\svgscale}}%
    \fi%
  \else%
    \setlength{\unitlength}{\svgwidth}%
  \fi%
  \global\let\svgwidth\undefined%
  \global\let\svgscale\undefined%
  \makeatother%
  \begin{picture}(1,1.06706211)%
    \lineheight{1}%
    \setlength\tabcolsep{0pt}%
    \put(0,0){\includegraphics[width=\unitlength,page=1]{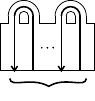}}%
    \put(0.41583703,0.01553918){\makebox(0,0)[lt]{\lineheight{1.45000005}\smash{\begin{tabular}[t]{l}$n$\end{tabular}}}}%
    \put(0,0){\includegraphics[width=\unitlength,page=2]{idn.pdf}}%
  \end{picture}%
\endgroup%
}$.

  The \emph{degree} of an $(m,n)$-Jacobi diagram is the degree of its Jacobi diagram.
  Let $\A_d(m,n)\subset \A(m,n)$ be the subspace spanned by $(m,n)$-Jacobi diagrams of degree $d$.
  We have $\A(m,n)=\bigoplus_{d\geq0} \A_d(m,n)$.
  Note that we have $$\A_d(0,n)\cong \flA_d(X_n).$$

  The category $\A$ has a structure of a linear symmetric strict monoidal category. See \cite{Kassel} for the definition of symmetric strict monoidal categories.
  The tensor product on objects is addition. The monoidal unit is $0$. The tensor product on morphisms is juxtaposition followed by horizontal rescaling and relabelling of indices. For example, Figure \ref{tensor} shows the tensor product of a $(1,1)$-Jacobi diagram and a $(2,2)$-Jacobi diagram.
  \begin{figure}[h]
      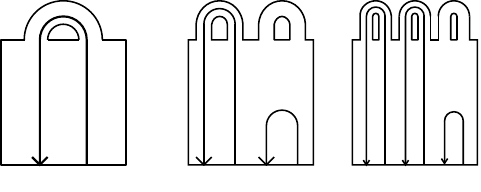
      \caption{The tensor product.}
      \label{tensor}
  \end{figure}
  The symmetry is determined by $P_{1,1}: 2\rightarrow 2$ which is depicted in Figure \ref{symm}.
  \begin{figure}[ht]
\begingroup%
  \makeatletter%
  \providecommand\color[2][]{%
    \errmessage{(Inkscape) Color is used for the text in Inkscape, but the package 'color.sty' is not loaded}%
    \renewcommand\color[2][]{}%
  }%
  \providecommand\transparent[1]{%
    \errmessage{(Inkscape) Transparency is used (non-zero) for the text in Inkscape, but the package 'transparent.sty' is not loaded}%
    \renewcommand\transparent[1]{}%
  }%
  \providecommand\rotatebox[2]{#2}%
  \newcommand*\fsize{\dimexpr\f@size pt\relax}%
  \newcommand*\lineheight[1]{\fontsize{\fsize}{#1\fsize}\selectfont}%
  \ifx\svgwidth\undefined%
    \setlength{\unitlength}{136.87447458bp}%
    \ifx\svgscale\undefined%
      \relax%
    \else%
      \setlength{\unitlength}{\unitlength * \real{\svgscale}}%
    \fi%
  \else%
    \setlength{\unitlength}{\svgwidth}%
  \fi%
  \global\let\svgwidth\undefined%
  \global\let\svgscale\undefined%
  \makeatother%
  \begin{picture}(1,0.63731642)%
    \lineheight{1}%
    \setlength\tabcolsep{0pt}%
    \put(-0.00626769,0.26980707){\color[rgb]{0,0,0}\makebox(0,0)[lt]{\lineheight{1.25}\smash{\begin{tabular}[t]{l}$P_{1,1}=$\\\end{tabular}}}}%
    \put(0,0){\includegraphics[width=\unitlength,page=1]{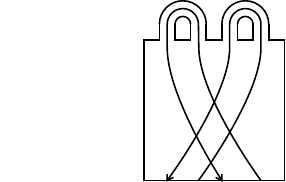}}%
  \end{picture}%
\endgroup%

      \caption{The symmetry.}
      \label{symm}
  \end{figure}



\section{Functors $A_d$ and $B_d$}\label{s3}
 In this section, we define a functor $A_d: \F^\op \rightarrow \fVect$ from the opposite category $\F^\op$ of the category $\F$ of finitely generated free groups to the category $\fVect$ of filtered vector spaces over $\K$.
 We define another functor $B_d: \FAb^\op \rightarrow \gVect$ from the opposite category $\FAb^\op$ of the category $\FAb$ of finitely generated free abelian groups to the category $\gVect$ of graded vector spaces over $\K$.
 We prove that the functor $A_d$ induces the functor $B_d$.

 \subsection{The categories $\F$, $\FAb$, $\fVect$ and $\gVect$}\label{ss31}
  Let us start with the definitions of the categories $\F$, $\FAb$, $\fVect$ and $\gVect$.

  For $n\geq 0$, let $F_n =\langle x_1, \cdots, x_n\rangle$ be the free group of rank $n$.
  The category $\F$ of finitely generated free groups is the full subcategory of the category $\Grp$ of groups such that the class of objects is $\{F_n: n\geq0\}$. We identify the object $F_n$ with the integer $n$. Thus, $\F(n,m)=\Hom(F_n,F_m)\cong F_m^n$.
  The category $\F$ is a symmetric strict monoidal category.

  The category $\FAb$ of finitely generated free abelian groups is the full subcategory of the category $\Ab$ of abelian groups such that the class of objects is $\{\Z^n: n\geq 0\}$. We identify the object $\Z^n$ with the integer $n$. Thus, $\FAb(n,m)=\Hom(\Z^n,\Z^m)\cong\Mat(m,n;\Z)$.
  The category $\FAb$ is also a symmetric strict monoidal category.

  Let $\ab: \F\rightarrow \FAb$ denote the restriction of the abelianization functor $\ab: \Grp\rightarrow \Ab$.
  Here the functor $\ab$ maps $F_n$ to its abelianization $\ab(F_n)=F_n/[F_n,F_n]\cong\bigoplus_{i=1}^n \Z[x_i]$, which is naturally identified with $\Z^n$.
  In the following sections, we use the opposite functor $\ab^{\op}: \F^{\op}\rightarrow \FAb^{\op}$.

  Let $\fVect$ denote the category of filtered vector spaces and filter-preserving morphisms.
  A filtered vector space is a $\K$-vector space $V$ with a decreasing sequence of vector spaces $V=V_0\supset V_1\supset\cdots$.

  Let $\gVect$ denote the category of graded vector spaces and degree-preserving morphisms.
  A graded vector space is a $\K$-vector space $W=\bigoplus_{d\geq 0}W_d$.

  For a filtered vector space $V$, set $\opegr^{d}(V):= V_d/V_{d+1}$ for $d\geq 0$. We call $\gr(V):=\bigoplus_{d\geq 0}\opegr^{d}(V)$ the associated graded vector space of $V$.
  Let $\gr: \fVect\rightarrow \gVect$ be the functor that sends a filtered vector space $V$ to the associated graded vector space $\gr(V)$ and a filter-preserving morphism $f:V\rightarrow W$ to a degree-preserving morphism $\gr(f):\gr(V)\rightarrow \gr(W)$ defined by $\gr(f)([v]_{V_{d+1}})=[f(v)]_{W_{d+1}}$ for $v\in V_d$.

 \subsection{The functor $A_d: \F^\op \rightarrow \fVect$}\label{ss32}
  We define a functor $A_d: \F^\op \rightarrow \fVect$.

  Let $d, n\geq 0$. Set
  $$A_d(n):= \A_d(0,n)\cong\flA_d(X_n).$$
  For $k\geq 0$, let $A_{d,k}(n)\subset A_d(n)$ be the subspace spanned by Jacobi diagrams with at least $k$ trivalent vertices. We have an isomorphism
  $$A_{d,k}(n)\cong\flA_{d,k}(X_n).$$
  Thus, we have $A_0(n)=A_{0,0}(n)\cong\K$.
  For $d\geq 1$, we have a filtration
  $$A_d(n)=A_{d,0}(n)\supset A_{d,1}(n)\supset A_{d,2}(n)\supset\cdots\supset A_{d,2d-1}(n)=0.$$

  Let $\K\F$ be the $\K$-\emph{linearization} of the category $\F$.
  Here, the class of objects in $\K\F$ is the same as that in $\F$ and the hom-set $\K\F(m,n)$ is the $\K$-vector space spanned by all of the morphisms $m\rightarrow n$ in $\F$ for $m,n\geq 0$.
  We have an isomorphism $\K\F^\op(m,n)\xrightarrow{\cong}\A_0(m,n)$ of $\K$-vector spaces (see Section 1.5 of \cite{HM_k}).
  Note that
  \begin{gather*}
    \begin{split}
        \A_0(m,n)&=\K\{(m,n)\text{-Jacobi diagrams with empty Jacobi diagram}\}\\
                 &=\K\{\text{homotopy classes of maps } X_n\rightarrow U_m \text{ relative to the boundary}\}.
    \end{split}
  \end{gather*}
  For a map $f:X_n\rightarrow U_m$ such that $f(\partial X_n)\subset l$, let $\ti{f}=f \cup \id_l:X_n\cup\; l\rightarrow U_m$ and $\ti{f}_{\ast}:\pi_1(X_n\cup l)\cong F_n\rightarrow\pi_1(U_m)\cong F_m$ be the induced map on the fundamental groups. The linear map $\A_0(m,n)\rightarrow\K\F^\op(m,n)$ that sends $f$ to $\ti{f}_{\ast}$
  is an isomorphism.

  We define a map
  $$A_d: \F^\op(m,n) \rightarrow \fVect(A_d(m),A_d(n))$$ by
  $$A_d: \F^\op(m,n)\hookrightarrow \K\F^\op(m,n)\xrightarrow{\cong}\A_0(m,n)\xrightarrow{\circ}\fVect(A_d(m),A_d(n)),
  $$
  where the last map is the composition in the category $\A$ and we recall that $A_d(m)=\A_d(0,m)\subset \A(0,m)$. 
  Note that since any element of $\A_0(m,n)$ has an empty Jacobi diagram, the composition of an element of $\A_0(m,n)$ with an element of $A_d(m)$ preserves the filtration.
  It can be easily checked that $A_d$ is a functor.

 \subsection{The functor $B_d: \FAb^\op \rightarrow \gVect$}\label{ss33}
  In this section, we define a functor $B_d: \FAb^\op \rightarrow \gVect$.

  Let
  $$V_n:= H^{1}(U_n;\K)=\Hom(H_1(U_n;\K),\K)$$
  and let $\{v_i\}$ denote the dual basis of $\{\bar{x}_i\}$. We fix the basis $\{v_i\}$ for $V_n$ and we have $V_n= \bigoplus_{i=1}^n \K v_i$.

  Let $B_d(n)$ denote the $\K$-vector space spanned by $V_n$-colored open Jacobi diagrams of degree $d$ modulo the AS, IHX and multilinearity relations, where the multilinearity relation is shown in Figure \ref{multi}.
  Since $V_n=\bigoplus_{i=1}^n \K v_i$, the space $B_d(n)$ is isomorphic to the space $\flB_d([n])$ defined in Section \ref{ssJacobi}.
  \begin{figure}[h]
    $\centre{
\begingroup%
  \makeatletter%
  \providecommand\color[2][]{%
    \errmessage{(Inkscape) Color is used for the text in Inkscape, but the package 'color.sty' is not loaded}%
    \renewcommand\color[2][]{}%
  }%
  \providecommand\transparent[1]{%
    \errmessage{(Inkscape) Transparency is used (non-zero) for the text in Inkscape, but the package 'transparent.sty' is not loaded}%
    \renewcommand\transparent[1]{}%
  }%
  \providecommand\rotatebox[2]{#2}%
  \newcommand*\fsize{\dimexpr\f@size pt\relax}%
  \newcommand*\lineheight[1]{\fontsize{\fsize}{#1\fsize}\selectfont}%
  \ifx\svgwidth\undefined%
    \setlength{\unitlength}{44.12409082bp}%
    \ifx\svgscale\undefined%
      \relax%
    \else%
      \setlength{\unitlength}{\unitlength * \real{\svgscale}}%
    \fi%
  \else%
    \setlength{\unitlength}{\svgwidth}%
  \fi%
  \global\let\svgwidth\undefined%
  \global\let\svgscale\undefined%
  \makeatother%
  \begin{picture}(1,0.99751256)%
    \lineheight{1}%
    \setlength\tabcolsep{0pt}%
    \put(-0.01029148,0.89287143){\makebox(0,0)[lt]{\lineheight{1.45000005}\smash{\begin{tabular}[t]{l}$aw_1+bw_2$\end{tabular}}}}%
    \put(0,0){\includegraphics[width=\unitlength,page=1]{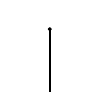}}%
  \end{picture}%
\endgroup%
}
    =a\:\centre{
\begingroup%
  \makeatletter%
  \providecommand\color[2][]{%
    \errmessage{(Inkscape) Color is used for the text in Inkscape, but the package 'color.sty' is not loaded}%
    \renewcommand\color[2][]{}%
  }%
  \providecommand\transparent[1]{%
    \errmessage{(Inkscape) Transparency is used (non-zero) for the text in Inkscape, but the package 'transparent.sty' is not loaded}%
    \renewcommand\transparent[1]{}%
  }%
  \providecommand\rotatebox[2]{#2}%
  \newcommand*\fsize{\dimexpr\f@size pt\relax}%
  \newcommand*\lineheight[1]{\fontsize{\fsize}{#1\fsize}\selectfont}%
  \ifx\svgwidth\undefined%
    \setlength{\unitlength}{19.31545887bp}%
    \ifx\svgscale\undefined%
      \relax%
    \else%
      \setlength{\unitlength}{\unitlength * \real{\svgscale}}%
    \fi%
  \else%
    \setlength{\unitlength}{\svgwidth}%
  \fi%
  \global\let\svgwidth\undefined%
  \global\let\svgscale\undefined%
  \makeatother%
  \begin{picture}(1,2.22227014)%
    \lineheight{1}%
    \setlength\tabcolsep{0pt}%
    \put(-0.02350978,1.98322873){\makebox(0,0)[lt]{\lineheight{1.45000005}\smash{\begin{tabular}[t]{l}$w_1$\end{tabular}}}}%
    \put(0,0){\includegraphics[width=\unitlength,page=1]{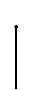}}%
  \end{picture}%
\endgroup%
}+b\:\centre{
\begingroup%
  \makeatletter%
  \providecommand\color[2][]{%
    \errmessage{(Inkscape) Color is used for the text in Inkscape, but the package 'color.sty' is not loaded}%
    \renewcommand\color[2][]{}%
  }%
  \providecommand\transparent[1]{%
    \errmessage{(Inkscape) Transparency is used (non-zero) for the text in Inkscape, but the package 'transparent.sty' is not loaded}%
    \renewcommand\transparent[1]{}%
  }%
  \providecommand\rotatebox[2]{#2}%
  \newcommand*\fsize{\dimexpr\f@size pt\relax}%
  \newcommand*\lineheight[1]{\fontsize{\fsize}{#1\fsize}\selectfont}%
  \ifx\svgwidth\undefined%
    \setlength{\unitlength}{19.31545887bp}%
    \ifx\svgscale\undefined%
      \relax%
    \else%
      \setlength{\unitlength}{\unitlength * \real{\svgscale}}%
    \fi%
  \else%
    \setlength{\unitlength}{\svgwidth}%
  \fi%
  \global\let\svgwidth\undefined%
  \global\let\svgscale\undefined%
  \makeatother%
  \begin{picture}(1,2.22227014)%
    \lineheight{1}%
    \setlength\tabcolsep{0pt}%
    \put(-0.02350978,1.98322873){\makebox(0,0)[lt]{\lineheight{1.45000005}\smash{\begin{tabular}[t]{l}$w_2$\end{tabular}}}}%
    \put(0,0){\includegraphics[width=\unitlength,page=1]{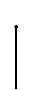}}%
  \end{picture}%
\endgroup%
}$
    for $a,b\in\K,w_1,w_2\in V_n$.
    \caption{Multilinearity.}
    \label{multi}
  \end{figure}

  For $k\geq 0$, let $B_{d,k}(n)\subset B_d(n)$ be the subspace spanned by open Jacobi diagrams with exactly $k$ trivalent vertices.
  We have an isomorphism $$B_{d,k}(n)\cong \flB_{d,k}([n]).$$
  Thus, we have $B_0(n)=B_{0,0}(n)=\K\emptyset$.
  For $d\geq 1$, we have a grading
  $$
    B_d(n)=\bigoplus_{k=0}^{2d-2} B_{d,k}(n).
  $$

  Let $T$ be a finite set.
  A $T$-colored open Jacobi diagram $D$ is called \emph{special} if the map $\{\text{univalent vertices of $D$}\}\rightarrow T$ that gives the coloring of $D$ is a bijection.

  Define $D_{d,k}$ as the $\K$-vector space spanned by special $[2d-k]$-colored open Jacobi diagrams of degree $d$ modulo the AS and IHX relations.
  The space $D_{d,k}$ has an $\gpS_{2d-k}$ action given by the action on the colorings.
  Considering $V_n^{\otimes 2d-k}$ as a right $\gpS_{2d-k}$-module by the action which permutes the factors, we have an isomorphism
  \begin{equation}\label{eqBD}
    B_{d,k}(n)\cong V_n^{\otimes 2d-k}\otimes_{\K\gpS_{2d-k}} D_{d,k}.
  \end{equation}
  Thus, any element of $B_{d,k}(n)$ can be written in the form
  $$
    u(w_1,\cdots,w_{2d-k}):=(w_1\otimes\cdots\otimes w_{2d-k})\otimes u
  $$
  for $u\in D_{d,k}$ and $w_1,\cdots, w_{2d-k}\in V_n$.

  For $m,n\geq 0$, we define a map
  $$B_d: \FAb^\op(m,n)\rightarrow \gVect(B_d(m),B_d(n))$$ as follows.
  We consider an element of $\FAb^\op(m,n)=\Mat(m,n;\Z)$ as an $(m\times n)$-matrix and an element of $V_n$ as a $(1\times n)$-matrix.
  For example, we consider $v_i\in V_n$ as the $i$-th standard basis.
  For $f\in\FAb^\op(m,n)$ and $u(w_1,\cdots,w_{2d-k})\in B_d(m)$, we define
  $$
    B_d(f)(u(w_1,\cdots,w_{2d-k})):=u(w_1\cdot f,\cdots,w_{2d-k}\cdot f).
  $$
  It can be easily seen that $B_d$ is a functor.

  We can apply the definition of  $B_d:\FAb^\op(m,n)\rightarrow\gVect(B_d(m),B_d(n))$ to the opposite group $\GL(n;\K)^{\op}$ of the general linear group $\GL(n;\K)$ with coefficient in $\K$ to obtain a group homomorphism
   $$
     B_d:\GL(n;\K)^\op\rightarrow\Aut_{\gVect}(B_d(n)).
   $$
  Then we have a $\GL(n;\K)$-action on $B_d(n)$ by identifying $\GL(n;\K)$ with $\GL(n;\K)^{\op}$ by taking an element to its inverse.

  On the other hand, we consider the $\GL(V_n)$-action on $B_d(n)$ that is determined by the standard action of $\GL(V_n)$ on each coloring. Here, we consider an element of $V_n=\bigoplus_{i=1}^n\K v_i$ as an $(n\times 1)$-matrix.
  The $\GL(n;\K)$-action on $B_d(n)$ factors through the dual action of $\GL(n;\K)$ on $V_n$ and the standard action of $\GL(V_n)$ on $B_d(n)$:
  \begin{equation}\label{GLaction}
   \GL(n;\K)\xrightarrow{{}^t\!(\cdot)^{-1}}\GL(V_n)
   \rightarrow\Aut_{\gVect}(B_d(n)).
  \end{equation}
   Note that the isomorphism (\ref{eqBD}) is a $\GL(V_n)$-module isomorphism.

 \subsection{Relation between the functors $A_d$ and $B_d$}\label{ss34}
  In this section, we show that the functor $A_d$ defined in Section \ref{ss32} induces the functor $B_d$ defined in Section \ref{ss33}.

  In the following lemma, we observe that we can identify the associated graded vector space $\gr(A_d(n))$ of the filtered vector space $A_d(n)$ with the graded vector space $B_d(n)$.
  \begin{lemma}\label{PBW'}
    For $d,n,k\geq 0$, we have an isomorphism of $\K$-vector spaces
    $$
      \theta_{d,n,k}: \opegr^{k}(A_d(n))\xrightarrow{\cong}B_{d,k}(n),
    $$
    which maps a Jacobi diagram $D$ on $X_n$ to an open Jacobi diagram $\theta_{d,n,k}(D)$ that is obtained from $D$ by assigning the color $v_i$ to a univalent vertex which is attached to the $i$-th arc component of $X_n$ for any $i=1,\cdots,n$.

    Taking direct sum, we have an isomorphism of graded vector spaces
    $$
      \theta_{d,n}: \gr(A_d(n))\xrightarrow{\cong}B_d(n).
    $$
    We call $\theta_{d,n}$ the PBW map.
  \end{lemma}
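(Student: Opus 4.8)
The plan is to realize $\theta_{d,n,k}$ as essentially the PBW isomorphism $\chi_{d,k}$ of Proposition \ref{PBW} read ``backwards''. Recall that under the identifications $A_d(n)\cong\flA_d(X_n)$ and $A_{d,k}(n)\cong\flA_{d,k}(X_n)$ we have $\opegr^k(A_d(n))=\opegr^k(\flA_{d,\ast}(X_n))$, and under $B_{d,k}(n)\cong\flB_{d,k}([n])$ the target is $\flB_{d,k}([n])$. Proposition \ref{PBW} already supplies an isomorphism $\chi_{d,k}\colon\flB_{d,k}([n])\xrightarrow{\cong}\opegr^k(\flA_{d,\ast}(X_n))$, so the first step is simply to \emph{define} $\theta_{d,n,k}:=\chi_{d,k}^{-1}$ and check that it agrees with the stated recipe (assign $v_i$ to a univalent vertex sitting on the $i$-th arc). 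This is exactly the content of the last paragraph before Proposition \ref{PBW}: in the quotient $\opegr^k(\flA_{d,\ast}(X_n))$ the position of the univalent vertices along each arc no longer matters modulo STU (moving a vertex past another changes the diagram by a term with one more trivalent vertex, hence is zero in the associated graded), so ``averaging over all ways of attaching'' coincides with ``any particular way of attaching''. Therefore $\chi_{d,k}$ sends the open Jacobi diagram colored by the multiset $\{v_i\}$ to the class of the Jacobi diagram with those vertices placed on the corresponding arcs, and its inverse is precisely the color-reading map $\theta_{d,n,k}$.

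The second step is to verify that $\theta_{d,n,k}$ is well-defined: a Jacobi diagram $D\in\flA_{d,k}(X_n)$ must be sent to a well-defined element of $B_{d,k}(n)\cong\flB_{d,k}([n])$, and the image must only depend on the class of $D$ in $\opegr^k(A_d(n))$. Well-definedness of the assignment on diagrams is immediate since colors are determined by which arc each univalent vertex lies on; the STU relations among degree-$d$, $\geq k$-trivalent diagrams either involve $\geq k+1$ trivalent vertices (hence lie in $A_{d,k+1}(n)$ and map to $0$) or, when they involve exactly $k$ trivalent vertices, reduce under the color-reading to the IHX/AS relations that are quotiented out in $\flB_{d,k}([n])$. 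Conversely one checks bijectivity either directly by exhibiting the two-sided inverse $\chi_{d,k}$, or by a dimension/basis count: both spaces are spanned by (classes of) uni-trivalent graphs of degree $d$ with exactly $k$ trivalent vertices and univalent vertices distributed among $n$ labeled arcs/colors, modulo AS and IHX, and the maps are inverse on these spanning sets.

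The third step is to assemble the graded isomorphism $\theta_{d,n}:=\bigoplus_k\theta_{d,n,k}\colon\gr(A_d(n))\xrightarrow{\cong}B_d(n)$, using $\gr(A_d(n))=\bigoplus_{k\geq 0}\opegr^k(A_d(n))$ and $B_d(n)=\bigoplus_{k=0}^{2d-2}B_{d,k}(n)$ (the ranges agree because $\opegr^k(A_d(n))=0$ for $k\geq 2d-1$), so $\theta_{d,n}$ is degree-preserving by construction and an isomorphism in each degree.

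I do not expect a genuine obstacle here: the statement is essentially a bookkeeping repackaging of the PBW theorem (Proposition \ref{PBW}) together with the already-noted fact that, in the associated graded, attaching univalent vertices to an arc is insensitive to their order. The only point requiring a little care is the compatibility check in step two --- confirming that STU relations on $k$-trivalent diagrams become precisely AS and IHX after reading off colors, with no stray relations --- but this is a routine local computation on the three diagrams appearing in an STU move. One should also remark that $\theta_{d,n}$ is natural and $\GL(V_n)$-equivariant, which sets up Proposition \ref{proptheta}, though strictly that is the content of the following lemma/proposition rather than of this statement.
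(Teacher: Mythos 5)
Your proposal is correct and follows essentially the same route as the paper: the paper's proof simply identifies $B_{d,k}(n)$ with $\flB_{d,k}([n])$ and $A_{d,k}(n)$ with $\flA_{d,k}(X_n)$ and sets $\theta_{d,n,k}:=\chi_{d,n,k}^{-1}$, invoking Proposition \ref{PBW}. Your additional verifications (that the inverse agrees with the color-reading recipe, via the remark that attaching order is irrelevant in the associated graded) are exactly the content the paper delegates to the note following Proposition \ref{PBW}.
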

  \begin{proof}
   By identifying $B_{d,k}(n)$ with $\flB_{d,k}([n])$ and $A_{d,k}(n)$ with $\flA_{d,k}(X_n)$ through the canonical isomorphisms, it follows from Proposition \ref{PBW} that we have an isomorphism
   $$\chi_{d,n,k}: B_{d,k}(n)\xrightarrow{\cong} \opegr^k(A_d(n)).$$
   Thus, we have an isomorphism $\theta_{d,n,k}:={\chi_{d,n,k}}^{-1}$.
  \end{proof}

  For $d\geq 0$, we define another functor $\wti{B_d}:\F^\op\rightarrow \gVect$ as follows.
  For an object $n\geq 0$, let $\wti{B_d}(n):= B_d(n)$.
  For a morphism $f:m\rightarrow n$ in $\F^\op$, let
  $$
   \wti{B_d}(f) := \theta_{d,n}\circ\gr(A_d(f))\circ\theta_{d,m}^{-1}
   : B_d(m) \rightarrow B_d(n).
  $$
  The family of morphisms $\theta_d:=(\theta_{d,n})_{n\geq 0}:\gr\circ A_d\Rightarrow\wti{B_d}$ is a natural isomorphism. This is because the PBW maps $\theta_{d,m}$ and $\theta_{d,n}$ are isomorphisms and because the following diagram commutes:
  \begin{gather*}
   \xymatrix{
    \gr(A_d(m))\ar[rr]^{\gr(A_d(f))}\ar[d]^{\cong}_{\theta_{d,m}}
    &&
     \gr(A_d(n))\ar[d]^{\theta_{d,n}}_{\cong}
      \\
     B_d(m)\ar[rr]_{\wti{B_d}(f)}
    &&
    B_d(n)\ar@{}[llu]|{\circlearrowright}.
   }
  \end{gather*}

  \begin{proposition}\label{prop341}
    For $d\geq 0$, we have $\wti{B_d}=B_d\circ\ab^\op$.
    Thus, the family of the PBW maps $\theta_d$ can be rewritten as a natural isomorphism $\theta_d:\gr\circ A_d\overset{\cong}\Rightarrow B_d\circ\ab^\op$. In diagram, we have
    \begin{gather*}
     \xymatrix{
      \F^\op\ar[rr]^{A_d}\ar[d]_{\ab^\op}
      &&
       \fVect\ar[d]^{\gr}
        \\
       \FAb^\op\ar[rr]_{B_d}
      &&
      \gVect \ar@{}[llu]|{{\cong}\:\big\Downarrow\:{\theta_d}}.
     }
    \end{gather*}
  \end{proposition}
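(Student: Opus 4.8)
\emph{Outline.} Since $\ab^{\op}$ sends the object $n$ to $\Z^n$, which we identify with $n$, both $\wti{B_d}$ and $B_d\circ\ab^{\op}$ send the object $n$ to $B_d(n)$; hence the only thing to prove is that the $\K$-linear maps $\wti{B_d}(f)$ and $B_d(\ab^{\op}f)$ from $B_d(m)$ to $B_d(n)$ coincide for every morphism $f\colon m\to n$ of $\F^{\op}$, i.e.\ for every group homomorphism $\varphi\colon F_n\to F_m$. Granting this, the two remaining assertions of the proposition — that $\theta_d$ is then a natural isomorphism $\gr\circ A_d\Rightarrow B_d\circ\ab^{\op}$, and that the displayed square commutes — are mere restatements, because $\theta_d\colon\gr\circ A_d\Rightarrow\wti{B_d}$ has just been identified as a natural isomorphism.

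\emph{Reduction to Jacobi diagrams.} Both maps are $\K$-linear, and by the PBW maps the space $B_d(m)$ is spanned by the elements $\theta_{d,m}(D)$ for Jacobi diagrams $D$ on $X_m$ with, say, exactly $k$ trivalent vertices, so that $D\in A_{d,k}(m)$. Using $\wti{B_d}(f)=\theta_{d,n}\circ\gr(A_d(f))\circ\theta_{d,m}^{-1}$ together with $A_d(f)(D)=u_f\circ D$ — where $u_f\in\A_0(m,n)$ is the $(m,n)$-Jacobi diagram with empty uni-trivalent graph that corresponds to $\varphi$ under the isomorphism $\A_0(m,n)\cong\K\F^{\op}(m,n)$ — and the fact that $A_d(f)$ preserves filtrations, the equality to be shown becomes
$$\theta_{d,n}\bigl([\,u_f\circ D\,]\bigr)=B_d(\ab^{\op}f)\bigl(\theta_{d,m}(D)\bigr)\quad\text{in }B_{d,k}(n),$$
where $[\,\cdot\,]$ denotes the class in $\opegr^{k}(A_d(n))$.

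\emph{Computing $u_f\circ D$ in the associated graded.} I would realize $u_f$ as a map $X_n\to U_m$ whose $i$-th arc winds around the handles of $U_m$ according to the word $\varphi(x_i)\in F_m=\pi_1(U_m)$, and then apply the composition rule of $\A$. The resulting diagram is a cabling of $D$: the $j$-th arc of $X_m$ occurring in $D$ is replaced by one parallel strand for each occurrence of $x_j^{\pm1}$ in the tuple $(\varphi(x_1),\dots,\varphi(x_n))$, such an occurrence appearing with the sign $+1$ or $-1$ dictated by the box and STU conventions, a strand coming from an occurrence inside $\varphi(x_i)$ being absorbed into the $i$-th arc of $X_n$ of the output, and each univalent vertex lying on the $j$-th arc of $X_m$ being summed over the strands that replace that arc. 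This operation creates no new trivalent vertices, so it is homogeneous of filtration degree $k$ and already computes $\gr(A_d(f))$ on $[D]$. In $\opegr^{k}(A_d(n))$ the various ways of attaching the resulting univalent vertices to the arcs of $X_n$ all agree (the remark following Proposition \ref{PBW}), and $\theta_{d,n}$ then simply records, for each univalent vertex, which arc of $X_n$ carries it. The upshot is that $\theta_{d,n}([u_f\circ D])$ is obtained from $\theta_{d,m}(D)$ by the colouring substitution $v_j\mapsto\sum_{i=1}^{n}\varepsilon_{j,i}\,v_i\in V_n$, where $\varepsilon_{j,i}$ is the exponent sum of $x_j$ in $\varphi(x_i)$.

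\emph{Matching with $B_d(\ab^{\op}f)$ and the main obstacle.} The integer matrix $(\varepsilon_{j,i})$ is exactly the matrix of $\ab^{\op}f\in\FAb^{\op}(m,n)$, equivalently of the abelianization $\ab(\varphi)\colon\Z^n\to\Z^m$; and substituting $v_j\mapsto\sum_i\varepsilon_{j,i}v_i$ in each colour is, by the definition of $B_d$ on $\FAb^{\op}$, precisely the map $B_d(\ab^{\op}f)$. This establishes the displayed equality on all $\theta_{d,m}(D)$, hence, by linearity, on all of $B_d(m)$, and the proposition follows. I expect the only genuine difficulty to lie in the third step: extracting the clean ``cabling plus sum over strands'' picture of $u_f\circ D$ from the definition of composition in $\A$, and in particular checking that an inverse occurrence $x_j^{-1}$ contributes with the sign that matches the abelianized matrix and that cabling does not change the number of trivalent vertices. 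Should the general diagram surgery prove unwieldy, one may instead verify the identity only for a generating set of morphisms of the symmetric monoidal category $\F^{\op}$ — on which $u_f\circ D$ is a short, explicit combination — and then propagate it to all of $\F^{\op}$ using functoriality of $\wti{B_d}$ and $B_d\circ\ab^{\op}$.
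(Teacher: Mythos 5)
Your proposal is correct and follows essentially the same route as the paper: reduce to diagrams colored by basis vectors, realize $A_d(f)$ as composition with the degree-$0$ morphism $u_f$, observe that the resulting signed cabling creates no trivalent vertices and that in $\opegr^k$ the attachment order is irrelevant, so only the exponent-sum matrix of $\ab^{\op}(f)$ survives, which is exactly $B_d(\ab^{\op}f)$. The paper handles the sign issue you flag (occurrences of $x_j^{-1}$ cancelling against $x_j$ in the associated graded) by the same argument, illustrated with the example $f(x_1)=x_1x_2x_1^{-1}$.
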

  \begin{proof}
   We show that $\wti{B_d}=B_d\circ\ab^\op$.
   For an element $f\in\F^\op(m,n)=\F(n,m)$, let $\ti{a}_{i,j}\in \N$ (resp. $a_{i,j}\in \Z$) be the number (resp. the sum of signs) of copies of $x_i^{\pm 1}$ that appear in the word $f(x_j)$ for $i=1,\cdots,m$ and $j=1,\cdots,n.$
   For example, if $f:F_2\rightarrow F_2$ is defined by
   \begin{equation}\label{exf}
     f(x_1)=x_1x_2x_1^{-1},\; f(x_2)=x_1^{-1}x_2,
   \end{equation}
   then the corresponding matrices $(\ti{a}_{i,j})$ and $(a_{i,j})$ are
   $$(\ti{a}_{i,j})=
   \begin{pmatrix}
     2&1\\
     1&1\\
   \end{pmatrix},\quad
   (a_{i,j})=
   \begin{pmatrix}
     0&-1\\
     1&1\\
   \end{pmatrix}.
   $$
   Note that the matrix $A=(a_{i,j})\in\Mat(m,n;\Z)$ corresponds to the morphism $\ab^{\op}(f)\in\FAb^{\op}(m,n)$.

   For a diagram $u'=u(w_1,\cdots,w_{2d-k})\in B_{d,k}(m)$, we prove that $\wti{B_d}(f)(u')=B_d\circ\ab^\op(f)(u').$
   It suffices to prove the case where $w_l=v_{i_l}$ for $l\in[2d-k]$ by multilinearity. By the definition of the map $B_d$, we have
   $$B_d\circ\ab^\op(f)(u(v_{i_1},\cdots,v_{i_{2d-k}}))=u(v_{i_1}\cdot A,\cdots,v_{i_{2d-k}}\cdot A).$$

   By Lemma \ref{PBW'}, $\theta_{d,m}^{-1}(u')$ is obtained from $u$ by attaching the $l$-colored univalent vertex of $u$ to the $i_l$-th component of $X_m$ for $l\in[2d-k]$.
   We consider the image of $\theta_{d,m}^{-1}(u')$ under the map $\gr(A_d(f))$.
   First, we take a look at an example. For the morphism $f:F_2\rightarrow F_2$ defined by (\ref{exf}), we have
   \begin{align*}
       &\gr(A_2(f))(\theta_{2,2}^{-1}(\centre{
\begingroup%
  \makeatletter%
  \providecommand\color[2][]{%
    \errmessage{(Inkscape) Color is used for the text in Inkscape, but the package 'color.sty' is not loaded}%
    \renewcommand\color[2][]{}%
  }%
  \providecommand\transparent[1]{%
    \errmessage{(Inkscape) Transparency is used (non-zero) for the text in Inkscape, but the package 'transparent.sty' is not loaded}%
    \renewcommand\transparent[1]{}%
  }%
  \providecommand\rotatebox[2]{#2}%
  \newcommand*\fsize{\dimexpr\f@size pt\relax}%
  \newcommand*\lineheight[1]{\fontsize{\fsize}{#1\fsize}\selectfont}%
  \ifx\svgwidth\undefined%
    \setlength{\unitlength}{69.02081041bp}%
    \ifx\svgscale\undefined%
      \relax%
    \else%
      \setlength{\unitlength}{\unitlength * \real{\svgscale}}%
    \fi%
  \else%
    \setlength{\unitlength}{\svgwidth}%
  \fi%
  \global\let\svgwidth\undefined%
  \global\let\svgscale\undefined%
  \makeatother%
  \begin{picture}(1,0.22819203)%
    \lineheight{1}%
    \setlength\tabcolsep{0pt}%
    \put(-0.00438613,0.08828141){\makebox(0,0)[lt]{\lineheight{1.45000005}\smash{\begin{tabular}[t]{l}$v_1$\end{tabular}}}}%
    \put(0.82217786,0.08968742){\makebox(0,0)[lt]{\lineheight{1.45000005}\smash{\begin{tabular}[t]{l}$v_2$\end{tabular}}}}%
    \put(0,0){\includegraphics[width=\unitlength,page=1]{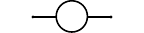}}%
  \end{picture}%
\endgroup%
}))
         =\gr(A_2(f))(\centre{
\begingroup%
  \makeatletter%
  \providecommand\color[2][]{%
    \errmessage{(Inkscape) Color is used for the text in Inkscape, but the package 'color.sty' is not loaded}%
    \renewcommand\color[2][]{}%
  }%
  \providecommand\transparent[1]{%
    \errmessage{(Inkscape) Transparency is used (non-zero) for the text in Inkscape, but the package 'transparent.sty' is not loaded}%
    \renewcommand\transparent[1]{}%
  }%
  \providecommand\rotatebox[2]{#2}%
  \newcommand*\fsize{\dimexpr\f@size pt\relax}%
  \newcommand*\lineheight[1]{\fontsize{\fsize}{#1\fsize}\selectfont}%
  \ifx\svgwidth\undefined%
    \setlength{\unitlength}{50.22484093bp}%
    \ifx\svgscale\undefined%
      \relax%
    \else%
      \setlength{\unitlength}{\unitlength * \real{\svgscale}}%
    \fi%
  \else%
    \setlength{\unitlength}{\svgwidth}%
  \fi%
  \global\let\svgwidth\undefined%
  \global\let\svgscale\undefined%
  \makeatother%
  \begin{picture}(1,0.86371758)%
    \lineheight{1}%
    \setlength\tabcolsep{0pt}%
    \put(0,0){\includegraphics[width=\unitlength,page=1]{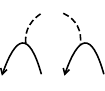}}%
    \put(0.14967359,0.01403842){\makebox(0,0)[lt]{\lineheight{1.45000005}\smash{\begin{tabular}[t]{l}$1$\end{tabular}}}}%
    \put(0.76176847,0.01576404){\makebox(0,0)[lt]{\lineheight{1.45000005}\smash{\begin{tabular}[t]{l}$2$\end{tabular}}}}%
    \put(0,0){\includegraphics[width=\unitlength,page=2]{thetau2.pdf}}%
  \end{picture}%
\endgroup%
})\\
         &=\centre{
\begingroup%
  \makeatletter%
  \providecommand\color[2][]{%
    \errmessage{(Inkscape) Color is used for the text in Inkscape, but the package 'color.sty' is not loaded}%
    \renewcommand\color[2][]{}%
  }%
  \providecommand\transparent[1]{%
    \errmessage{(Inkscape) Transparency is used (non-zero) for the text in Inkscape, but the package 'transparent.sty' is not loaded}%
    \renewcommand\transparent[1]{}%
  }%
  \providecommand\rotatebox[2]{#2}%
  \newcommand*\fsize{\dimexpr\f@size pt\relax}%
  \newcommand*\lineheight[1]{\fontsize{\fsize}{#1\fsize}\selectfont}%
  \ifx\svgwidth\undefined%
    \setlength{\unitlength}{53.99897926bp}%
    \ifx\svgscale\undefined%
      \relax%
    \else%
      \setlength{\unitlength}{\unitlength * \real{\svgscale}}%
    \fi%
  \else%
    \setlength{\unitlength}{\svgwidth}%
  \fi%
  \global\let\svgwidth\undefined%
  \global\let\svgscale\undefined%
  \makeatother%
  \begin{picture}(1,1.06076948)%
    \lineheight{1}%
    \setlength\tabcolsep{0pt}%
    \put(0,0){\includegraphics[width=\unitlength,page=1]{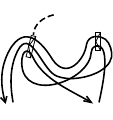}}%
    \put(0.00834735,0.0192672){\makebox(0,0)[lt]{\lineheight{1.45000005}\smash{\begin{tabular}[t]{l}$1$\end{tabular}}}}%
    \put(0.7734996,0.01305724){\makebox(0,0)[lt]{\lineheight{1.45000005}\smash{\begin{tabular}[t]{l}$2$\end{tabular}}}}%
    \put(0,0){\includegraphics[width=\unitlength,page=2]{grA2fu21.pdf}}%
  \end{picture}%
\endgroup%
}
         =\centre{
\begingroup%
  \makeatletter%
  \providecommand\color[2][]{%
    \errmessage{(Inkscape) Color is used for the text in Inkscape, but the package 'color.sty' is not loaded}%
    \renewcommand\color[2][]{}%
  }%
  \providecommand\transparent[1]{%
    \errmessage{(Inkscape) Transparency is used (non-zero) for the text in Inkscape, but the package 'transparent.sty' is not loaded}%
    \renewcommand\transparent[1]{}%
  }%
  \providecommand\rotatebox[2]{#2}%
  \newcommand*\fsize{\dimexpr\f@size pt\relax}%
  \newcommand*\lineheight[1]{\fontsize{\fsize}{#1\fsize}\selectfont}%
  \ifx\svgwidth\undefined%
    \setlength{\unitlength}{53.99897926bp}%
    \ifx\svgscale\undefined%
      \relax%
    \else%
      \setlength{\unitlength}{\unitlength * \real{\svgscale}}%
    \fi%
  \else%
    \setlength{\unitlength}{\svgwidth}%
  \fi%
  \global\let\svgwidth\undefined%
  \global\let\svgscale\undefined%
  \makeatother%
  \begin{picture}(1,1.05975031)%
    \lineheight{1}%
    \setlength\tabcolsep{0pt}%
    \put(0,0){\includegraphics[width=\unitlength,page=1]{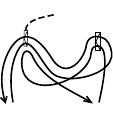}}%
    \put(0.00951967,0.01860718){\makebox(0,0)[lt]{\lineheight{1.45000005}\smash{\begin{tabular}[t]{l}$1$\end{tabular}}}}%
    \put(0.78924989,0.01305724){\makebox(0,0)[lt]{\lineheight{1.45000005}\smash{\begin{tabular}[t]{l}$2$\end{tabular}}}}%
    \put(0,0){\includegraphics[width=\unitlength,page=2]{grA2fu23.pdf}}%
  \end{picture}%
\endgroup%
}\;-\;\centre{
\begingroup%
  \makeatletter%
  \providecommand\color[2][]{%
    \errmessage{(Inkscape) Color is used for the text in Inkscape, but the package 'color.sty' is not loaded}%
    \renewcommand\color[2][]{}%
  }%
  \providecommand\transparent[1]{%
    \errmessage{(Inkscape) Transparency is used (non-zero) for the text in Inkscape, but the package 'transparent.sty' is not loaded}%
    \renewcommand\transparent[1]{}%
  }%
  \providecommand\rotatebox[2]{#2}%
  \newcommand*\fsize{\dimexpr\f@size pt\relax}%
  \newcommand*\lineheight[1]{\fontsize{\fsize}{#1\fsize}\selectfont}%
  \ifx\svgwidth\undefined%
    \setlength{\unitlength}{53.99897926bp}%
    \ifx\svgscale\undefined%
      \relax%
    \else%
      \setlength{\unitlength}{\unitlength * \real{\svgscale}}%
    \fi%
  \else%
    \setlength{\unitlength}{\svgwidth}%
  \fi%
  \global\let\svgwidth\undefined%
  \global\let\svgscale\undefined%
  \makeatother%
  \begin{picture}(1,1.06416964)%
    \lineheight{1}%
    \setlength\tabcolsep{0pt}%
    \put(0,0){\includegraphics[width=\unitlength,page=1]{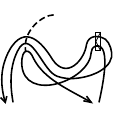}}%
    \put(0.00951967,0.01860718){\makebox(0,0)[lt]{\lineheight{1.45000005}\smash{\begin{tabular}[t]{l}$1$\end{tabular}}}}%
    \put(0.76862641,0.01305724){\makebox(0,0)[lt]{\lineheight{1.45000005}\smash{\begin{tabular}[t]{l}$2$\end{tabular}}}}%
    \put(0,0){\includegraphics[width=\unitlength,page=2]{grA2f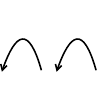}}%
  \end{picture}%
\endgroup%
}
         \in \opegr^2(A_2(2)).
   \end{align*}
   The first term vanishes because two Jacobi diagrams with the same uni-trivalent graph and with different ways of attaching univalent vertices to a component of $X_2$ are equivalent in $\opegr^2(A_2(2))$ by the STU relations. Thus, we have
   $$
      \gr(A_2(f))(\theta_{2,2}^{-1}(\centre{}))
      =\:-\;\centre{}.
   $$
   As we observed in the example, the map $\gr(A_d(f))$ sends $\theta_{d,m}^{-1}(u')$ to a linear combination of diagrams which are obtained from $u$ by attaching univalent vertices to $X_n$.
   In particular, the map $\gr(A_d(f))$ sends the $l$-colored univalent vertex of $u$ to the signed sum of $\ti{a}_{i_l,j}$ copies of the vertex which are attached to the $j$-th component of $X_n$ for any $j=1,\cdots,n$. In the associated graded vector space $\opegr^k(A_d(n))$, the image of the $l$-colored vertex is actually the signed sum of $|a_{i_l,j}|$ copies of the vertex which are attached to the $j$-th component of $X_n$.

   Through the PBW map $\theta_{d,n}$ again, the Jacobi diagram of  $\wti{B_d}(f)(u')$ is $u$. The coloring of $\wti{B_d}(f)(u')$ that corresponds to the $l$-colored univalent vertex of $u$ is $\sum_{j=1}^n a_{i_l,j} v_j=v_{i_l}\cdot A$, which is equal to that of $B_d\circ\ab^\op(f)(u')$.
  \end{proof}


\section{The functors $A_1$ and $B_1$}\label{s4}
 In this section, we compute the functors $A_1$ and $B_1$.
 
 The vector space $B_1(n)$ has a basis $\{d_{i,j}=\centre{
\begingroup%
  \makeatletter%
  \providecommand\color[2][]{%
    \errmessage{(Inkscape) Color is used for the text in Inkscape, but the package 'color.sty' is not loaded}%
    \renewcommand\color[2][]{}%
  }%
  \providecommand\transparent[1]{%
    \errmessage{(Inkscape) Transparency is used (non-zero) for the text in Inkscape, but the package 'transparent.sty' is not loaded}%
    \renewcommand\transparent[1]{}%
  }%
  \providecommand\rotatebox[2]{#2}%
  \newcommand*\fsize{\dimexpr\f@size pt\relax}%
  \newcommand*\lineheight[1]{\fontsize{\fsize}{#1\fsize}\selectfont}%
  \ifx\svgwidth\undefined%
    \setlength{\unitlength}{36.32825996bp}%
    \ifx\svgscale\undefined%
      \relax%
    \else%
      \setlength{\unitlength}{\unitlength * \real{\svgscale}}%
    \fi%
  \else%
    \setlength{\unitlength}{\svgwidth}%
  \fi%
  \global\let\svgwidth\undefined%
  \global\let\svgscale\undefined%
  \makeatother%
  \begin{picture}(1,0.05849441)%
    \lineheight{1}%
    \setlength\tabcolsep{0pt}%
    \put(0,0){\includegraphics[width=\unitlength,page=1]{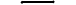}}%
    \put(-0.00416665,0.00970427){\makebox(0,0)[lt]{\lineheight{1.45000005}\smash{\begin{tabular}[t]{l}$v_i$\end{tabular}}}}%
    \put(0.84720478,0.01612895){\makebox(0,0)[lt]{\lineheight{1.45000005}\smash{\begin{tabular}[t]{l}$v_j$\end{tabular}}}}%
  \end{picture}%
\endgroup%
}:1\leq i\leq j\leq n\}$. 
 We have a linear isomorphism
 \begin{equation}\label{B_1}
   B_1(n)\xrightarrow{\cong} \Sym^2(V_n)
 \end{equation}
 that maps $d_{i,j}$ to $v_i\cdot v_j$ for $i\leq j$, where $\Sym^2(V_n)$ isthe symmetric square of $V_n$.
 
 We can compute the functor $B_1$ explicitly as follows.
 We extend the notation $d_{i,j}$ by letting $d_{j,i}:=d_{i,j}$ for $i<j$ and let $D:=(d_{i,j})\in \Mat(n,n; B_1(n))$.
 For a morphism $P\in \FAb^{\op}(m,n)=\Mat(m,n;\Z)$,
 it is easily checked that
 $$B_1(P)(d_{i,j})=(P\;D\;{}^t\!P)_{i,j}$$
 for $1\leq i\leq j\leq m$.
 Let $\Sym^2:\Vect \rightarrow \Vect$ denote the functor that maps a vector space $V$ to its symmetric square $\Sym^2(V)$.
 \begin{proposition}\label{B1}
  We have
  $$B_1\cong \Sym^2(\Hom(-,\K)).$$
  Therefore, the linear isomorphism (\ref{B_1}) gives a $\GL(V_n)$-module isomorphism.
 \end{proposition}
 
  Since $A_{1,1}(n)=0$, we have
 $$A_1(n)=A_{1,0}(n)=\gr(A_1(n))\xrightarrow[\cong]{\theta_{1,n}} B_1(n)=B_{1,0}(n)$$ via the PBW map.
 The space $A_1(n)$ has a basis $\{c_{i,j}:1\leq i\leq j\leq n\}$ corresponding to $\{d_{i,j}\}$, where
 $$c_{i,j}=
  \begin{cases}
    \centre{
\begingroup%
  \makeatletter%
  \providecommand\color[2][]{%
    \errmessage{(Inkscape) Color is used for the text in Inkscape, but the package 'color.sty' is not loaded}%
    \renewcommand\color[2][]{}%
  }%
  \providecommand\transparent[1]{%
    \errmessage{(Inkscape) Transparency is used (non-zero) for the text in Inkscape, but the package 'transparent.sty' is not loaded}%
    \renewcommand\transparent[1]{}%
  }%
  \providecommand\rotatebox[2]{#2}%
  \newcommand*\fsize{\dimexpr\f@size pt\relax}%
  \newcommand*\lineheight[1]{\fontsize{\fsize}{#1\fsize}\selectfont}%
  \ifx\svgwidth\undefined%
    \setlength{\unitlength}{75.29748464bp}%
    \ifx\svgscale\undefined%
      \relax%
    \else%
      \setlength{\unitlength}{\unitlength * \real{\svgscale}}%
    \fi%
  \else%
    \setlength{\unitlength}{\svgwidth}%
  \fi%
  \global\let\svgwidth\undefined%
  \global\let\svgscale\undefined%
  \makeatother%
  \begin{picture}(1,0.38109198)%
    \lineheight{1}%
    \setlength\tabcolsep{0pt}%
    \put(0,0){\includegraphics[width=\unitlength,page=1]{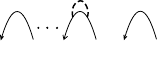}}%
    \put(0.08004113,0.01082258){\color[rgb]{0,0,0}\makebox(0,0)[lt]{\lineheight{1.25}\smash{\begin{tabular}[t]{l}$1$\end{tabular}}}}%
    \put(0.49148156,0.00895937){\color[rgb]{0,0,0}\makebox(0,0)[lt]{\lineheight{1.25}\smash{\begin{tabular}[t]{l}$i$\end{tabular}}}}%
    \put(0.87455503,0.01382061){\color[rgb]{0,0,0}\makebox(0,0)[lt]{\lineheight{1.25}\smash{\begin{tabular}[t]{l}$n$\end{tabular}}}}%
    \put(0,0){\includegraphics[width=\unitlength,page=2]{c_i,i.pdf}}%
  \end{picture}%
\endgroup%
} & (i=j)\\
    \centre{
\begingroup%
  \makeatletter%
  \providecommand\color[2][]{%
    \errmessage{(Inkscape) Color is used for the text in Inkscape, but the package 'color.sty' is not loaded}%
    \renewcommand\color[2][]{}%
  }%
  \providecommand\transparent[1]{%
    \errmessage{(Inkscape) Transparency is used (non-zero) for the text in Inkscape, but the package 'transparent.sty' is not loaded}%
    \renewcommand\transparent[1]{}%
  }%
  \providecommand\rotatebox[2]{#2}%
  \newcommand*\fsize{\dimexpr\f@size pt\relax}%
  \newcommand*\lineheight[1]{\fontsize{\fsize}{#1\fsize}\selectfont}%
  \ifx\svgwidth\undefined%
    \setlength{\unitlength}{73.70679851bp}%
    \ifx\svgscale\undefined%
      \relax%
    \else%
      \setlength{\unitlength}{\unitlength * \real{\svgscale}}%
    \fi%
  \else%
    \setlength{\unitlength}{\svgwidth}%
  \fi%
  \global\let\svgwidth\undefined%
  \global\let\svgscale\undefined%
  \makeatother%
  \begin{picture}(1,0.34680935)%
    \lineheight{1}%
    \setlength\tabcolsep{0pt}%
    \put(0,0){\includegraphics[width=\unitlength,page=1]{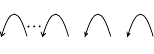}}%
    \put(0.06309489,0.01240675){\color[rgb]{0,0,0}\makebox(0,0)[lt]{\lineheight{1.25}\smash{\begin{tabular}[t]{l}$1$\end{tabular}}}}%
    \put(0.3452176,0.00726286){\color[rgb]{0,0,0}\makebox(0,0)[lt]{\lineheight{1.25}\smash{\begin{tabular}[t]{l}$i$\end{tabular}}}}%
    \put(0.6188899,0.01207061){\color[rgb]{0,0,0}\makebox(0,0)[lt]{\lineheight{1.25}\smash{\begin{tabular}[t]{l}$j$\end{tabular}}}}%
    \put(0.89185603,0.00804456){\color[rgb]{0,0,0}\makebox(0,0)[lt]{\lineheight{1.25}\smash{\begin{tabular}[t]{l}$n$\end{tabular}}}}%
    \put(0,0){\includegraphics[width=\unitlength,page=2]{c_i,j.pdf}}%
  \end{picture}%
\endgroup%
} & (i< j).
  \end{cases}
 $$

 Considering the target categories of the functors $A_1$ and $B_1$ as the category $\Vect$ of vector spaces over $\K$, we have
 $$A_1\cong B_1\circ \ab^{\op}$$ by Proposition \ref{prop341}.


\section{Action of $\gr(\IA(n))$ on $B_{d}(n)$}\label{s5}
 The functor $A_d$ gives an $\Aut(F_n)^{\op}$-action on $A_d(n)$, where $\Aut(F_n)^{\op}$ denotes the opposite group of $\Aut(F_n)$.
 We have a right $\Aut(F_n)$-action on $A_d(n)$ by letting $$u\cdot g:=A_d(g)(u)$$ for $u\in A_d(n)$ and $g\in\Aut(F_n)$.

 We first consider the case $n=1$.
 We have
 $$\Aut(F_1)=\{1,s\}\cong \GL(1;\Z)\cong \Z/2\Z.$$
 The action of $s$ on $B_{d,k}(1)$ is multiplication by
  $(-1)^{2d-k}=(-1)^{k}$, although it is known that $B_{d,k}(1)=0$ for $d\leq 9$ and odd $k$ and it is open whether or not we have $B_{d,k}(1)=0$ for all odd $k$ \cite{BN_v}.

 Let $\IA(n)$ denote the \emph{IA-automorphism group} of $F_n$, which is the kernel of the canonical homomorphism $\Aut(F_n)\rightarrow\Aut(H_1(F_n;\Z))\cong \GL(n;\Z)$.
 In this section, we construct an action of the associated graded Lie algebra $\gr(\IA(n))$ of the lower central series of $\IA(n)$ on the graded vector space $B_d(n)$, consisting of group homomorphisms
 $$\beta_{d,k}^{r}:\opegr^r(\IA(n))
 \rightarrow\Hom(B_{d,k}(n),B_{d,k+r}(n))$$
 for $k\geq 0$ and $r\geq 1$, which we define in Section \ref{ss51}.
 In Section \ref{ss53}, we extend this action by adding the case where $r=0$, to obtain an action of an extended graded Lie algebra $\gr(\Aut(F_n)^\op)$ on the graded vector space $B_d(n)$.
 See Appendix \ref{ss52} for extended graded Lie algebras. 
 \subsection{$\Out(F_n)$-action on $A_d(n)$}
  The \emph{inner automorphism group} $\Inn(F_n)$ of $F_n$ is the normal subgroup of $\Aut(F_n)$ consisting of automorphisms $\sigma_a$ for any $a\in F_n$, defined by $\sigma_a(x)=axa^{-1}$ for $x\in F_n$.
  By the definitions of $\Inn(F_n)$ and $\IA(n)$, it follows that $\Inn(F_n)$ is a normal subgroup of $\IA(n)$ for any $n\geq 1$.
  Here, we consider the $\Inn(F_n)$-action on $A_d(n)$.

  \begin{theorem}\label{out}
   The $\Inn(F_n)$-action on $A_d(n)$ is trivial for any $d,n \geq 0$.
   Therefore, the $\Aut(F_n)$-action on $A_d(n)$ induces an action on $A_d(n)$ of the \emph{outer automorphism group} $\Out(F_n)=\Aut(F_n)/\Inn(F_n)$ of $F_n$.

   Thus, the functor $A_d$ is an \emph{outer functor} in the sense of \cite{POWELLVESPA} for any $d\geq 0$.
  \end{theorem}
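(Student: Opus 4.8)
The plan is to show that for any $a\in F_n$, the inner automorphism $\sigma_a$ acts as the identity on $A_d(n)$, by a direct diagrammatic computation using the description of the $\Aut(F_n)$-action coming from the functor $A_d$ constructed in Section \ref{ss32}. Recall that $A_d(g)$ is realized by precomposition in the category $\A$ with the element of $\A_0(n,n)$ corresponding to $g\in\Aut(F_n)$; under the isomorphism $\K\F^\op(n,n)\cong\A_0(n,n)$, the automorphism $g$ is sent to the homotopy class of the map $X_n\to U_n$ whose $i$-th arc traces out the word $g(x_i)$ in the generators. So the action of $g$ on a Jacobi diagram $D$ on $X_n$ is obtained by replacing the $i$-th arc component of $X_n$ by a cabling/parallel pushoff that realizes the loop $g(x_i)\subset U_n$ and then re-expressing the result in $\A_d(0,n)$ via the STU relations.

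The first step is to reduce to generators of $\Inn(F_n)$: since $\{\sigma_{x_1},\dots,\sigma_{x_n}\}$ generate $\Inn(F_n)$ and the action is by (anti)homomorphism, it suffices to treat $\sigma=\sigma_{x_k}$ for a single $k$, i.e. the automorphism with $\sigma(x_k)=x_k$ and $\sigma(x_i)=x_k x_i x_k^{-1}$ for $i\neq k$ — or even more conveniently, reduce to the automorphism conjugating \emph{all} generators simultaneously by $x_k$, $x_i\mapsto x_k x_i x_k^{-1}$ for all $i$ (which is $\sigma_{x_k}$ on the nose), and then exploit that an arbitrary $\sigma_a$ is a product of such. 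Geometrically $A_d(\sigma_{x_k})$ amounts to taking the whole tangle-with-diagram and conjugating it by a meridian-type loop $x_k$ running along the $k$-th handle, i.e. pushing a parallel strand labelled $x_k$ down the left of the whole picture and a parallel strand labelled $x_k^{-1}$ down the right.

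The key step is then a "slide/cancellation" argument: the extra $x_k$-strand on the far left and the $x_k^{-1}$-strand on the far right bound an annulus in $U_n$ (they are parallel copies of the same loop with opposite orientations), so that after isotopy in $U_n$ — which is exactly the equivalence relation defining $(0,n)$-Jacobi diagrams — the two parallel strands can be slid past the entire diagram $D$ and cancelled against each other. Concretely, since each arc of $X_n$ goes from the $2i$-th to the $(2i-1)$-st boundary point and the endpoints all lie on the bottom line $l$, the conjugating loops attach at the two ends of the collection of arcs and can be isotoped together (dragging them up and over everything) and then freely homotoped to the constant loop. No STU relations or new trivalent vertices are needed: this is purely an isotopy in $U_n$, hence an equality of $(0,n)$-Jacobi diagrams, so $A_d(\sigma_{x_k})(D)=D$ for every diagram $D$. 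Taking linear combinations, $A_d(\sigma_a)=\mathrm{id}$ for all $a$, which is the claim; the statements about $\Out(F_n)$ and $A_d$ being an outer functor are then formal.

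The main obstacle I anticipate is purely bookkeeping rather than conceptual: making the "slide the parallel strands past $D$ and cancel" move rigorous requires care with the convention that products in $\pi_1(U_n)$ are written in the opposite order, with the normalization of the boundary points of $X_n$ on $l$, and with the box-notation conventions for how cablings interact with trivalent vertices and with the STU relations when the $x_k$-strand momentarily has to be pushed through a handle. One must check that at no intermediate stage does one genuinely create trivalent vertices that survive — equivalently, that the relevant moves are homotopies in $U_n$ (relative to $\partial X_n$) and not merely regular-homotopies requiring STU corrections. I expect this is handled cleanly by first isotoping the given $(0,n)$-Jacobi diagram so that the two conjugating strands run parallel to all the handle cores (as in the description of composition in $\A$), at which point the cancellation is manifestly an isotopy.
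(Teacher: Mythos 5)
Your strategy is the paper's strategy: reduce to the generators $\sigma_{x_i}$ of $\Inn(F_n)$ (the paper reduces further to $\sigma_{x_1}$ alone by conjugating with the permutation automorphisms $P_{1,i}$), and then kill the conjugating strands by a diagrammatic slide-and-cancel argument. The one point where your write-up glosses over the actual content is the claim that the cancellation is ``purely an isotopy in $U_n$'' with ``no STU relations or new trivalent vertices needed.'' By the definition of composition in $\A$, the morphism $\tilde{\sigma}_{x_1}$ sends $2n-1$ strands through the first handle, so forming $u\cdot\sigma_{x_1}$ cables the first arc component of $u$ into $2n-1$ parallel copies and distributes each of its dashed legs over all of these copies via the box notation; the result is a priori a signed sum of Jacobi diagrams, not a single diagram that one can isotope back to $u$. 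The term in which every leg stays on the original copy is $u$ (up to homotopy in the contractible cube), and the whole point of the proof is that the remaining cross terms cancel; this cancellation is carried out in the paper by a four-step box-notation computation, i.e.\ precisely the box-sliding calculus that packages the STU relation, rather than by a bare homotopy. So the mechanism you flag in your last paragraph as ``bookkeeping'' is in fact where the proof lives, but since you correctly identified both the geometric idea and this risk, the proposal is essentially the paper's argument.
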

  \begin{proof}
   We show that the $\Inn(F_n)$-action on $A_d(n)$ is trivial.
   Since $\Inn(F_0)=\Inn(F_1)=1$ and since $A_0(n)=\K\emptyset$, we have only to consider for $n\geq 2, d\geq 1$.
   
   Since $F_n\cong \Inn(F_n)$ for $n\geq 2$,
   the inner automorphism group $\Inn(F_n)$ is generated by $\sigma_{x_1},\cdots, \sigma_{x_n}$.
   For each $i=2,\cdots,n$, define $P_{1,i}\in \Aut(F_n)$ by
   $$P_{1,i}(x_1)=x_i,\quad P_{1,i}(x_i)=x_1,\quad P_{1,i}(x_j)=x_j\quad(j\neq 1,i).$$
   Then, we have
   $\sigma_{x_i}= P_{1,i}\sigma_{x_1}P_{1,i}$.
   If we have $u\cdot \sigma_{x_1} =u$ for any $u\in A_d(n)$, then we have 
   $$u\cdot \sigma_{x_i}=u\cdot P_{1,i}\sigma_{x_1}P_{1,i}
   =(u\cdot P_{1,i})\cdot \sigma_{x_1}P_{1,i}
   =u\cdot P_{1,i}P_{1,i}=u.$$
   Therefore, we need to prove $u\cdot \sigma_{x_1} =u$ for any $u\in A_d(n)$.

   For $u=\centre{
\begingroup%
  \makeatletter%
  \providecommand\color[2][]{%
    \errmessage{(Inkscape) Color is used for the text in Inkscape, but the package 'color.sty' is not loaded}%
    \renewcommand\color[2][]{}%
  }%
  \providecommand\transparent[1]{%
    \errmessage{(Inkscape) Transparency is used (non-zero) for the text in Inkscape, but the package 'transparent.sty' is not loaded}%
    \renewcommand\transparent[1]{}%
  }%
  \providecommand\rotatebox[2]{#2}%
  \newcommand*\fsize{\dimexpr\f@size pt\relax}%
  \newcommand*\lineheight[1]{\fontsize{\fsize}{#1\fsize}\selectfont}%
  \ifx\svgwidth\undefined%
    \setlength{\unitlength}{72.71900792bp}%
    \ifx\svgscale\undefined%
      \relax%
    \else%
      \setlength{\unitlength}{\unitlength * \real{\svgscale}}%
    \fi%
  \else%
    \setlength{\unitlength}{\svgwidth}%
  \fi%
  \global\let\svgwidth\undefined%
  \global\let\svgscale\undefined%
  \makeatother%
  \begin{picture}(1,0.69989492)%
    \lineheight{1}%
    \setlength\tabcolsep{0pt}%
    \put(0,0){\includegraphics[width=\unitlength,page=1]{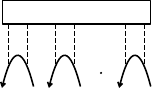}}%
    \put(0.07870766,0.00969592){\makebox(0,0)[lt]{\lineheight{1.45000005}\smash{\begin{tabular}[t]{l}$1$\end{tabular}}}}%
    \put(0.38880482,0.01103444){\makebox(0,0)[lt]{\lineheight{1.45000005}\smash{\begin{tabular}[t]{l}$2$\end{tabular}}}}%
    \put(0.85610042,0.01576503){\makebox(0,0)[lt]{\lineheight{1.45000005}\smash{\begin{tabular}[t]{l}$n$\end{tabular}}}}%
    \put(0,0){\includegraphics[width=\unitlength,page=2]{u.pdf}}%
    \put(0.46151562,0.58874351){\makebox(0,0)[lt]{\lineheight{1.45000005}\smash{\begin{tabular}[t]{l}$u$\end{tabular}}}}%
  \end{picture}%
\endgroup%
}\in A_d(n)$,
   we have $u\cdot \sigma_{x_1}=\scalebox{0.7}{$\centre{
\begingroup%
  \makeatletter%
  \providecommand\color[2][]{%
    \errmessage{(Inkscape) Color is used for the text in Inkscape, but the package 'color.sty' is not loaded}%
    \renewcommand\color[2][]{}%
  }%
  \providecommand\transparent[1]{%
    \errmessage{(Inkscape) Transparency is used (non-zero) for the text in Inkscape, but the package 'transparent.sty' is not loaded}%
    \renewcommand\transparent[1]{}%
  }%
  \providecommand\rotatebox[2]{#2}%
  \newcommand*\fsize{\dimexpr\f@size pt\relax}%
  \newcommand*\lineheight[1]{\fontsize{\fsize}{#1\fsize}\selectfont}%
  \ifx\svgwidth\undefined%
    \setlength{\unitlength}{104.85941456bp}%
    \ifx\svgscale\undefined%
      \relax%
    \else%
      \setlength{\unitlength}{\unitlength * \real{\svgscale}}%
    \fi%
  \else%
    \setlength{\unitlength}{\svgwidth}%
  \fi%
  \global\let\svgwidth\undefined%
  \global\let\svgscale\undefined%
  \makeatother%
  \begin{picture}(1,0.75790196)%
    \lineheight{1}%
    \setlength\tabcolsep{0pt}%
    \put(0,0){\includegraphics[width=\unitlength,page=1]{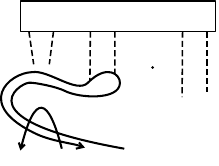}}%
    \put(0.1477284,0.00995825){\makebox(0,0)[lt]{\lineheight{1.45000005}\smash{\begin{tabular}[t]{l}$1$\end{tabular}}}}%
    \put(0.43446711,0.00515219){\makebox(0,0)[lt]{\lineheight{1.45000005}\smash{\begin{tabular}[t]{l}$2$\end{tabular}}}}%
    \put(0.86291602,0.01186214){\makebox(0,0)[lt]{\lineheight{1.45000005}\smash{\begin{tabular}[t]{l}$n$\end{tabular}}}}%
    \put(0,0){\includegraphics[width=\unitlength,page=2]{uK22.pdf}}%
    \put(0.48301583,0.65358089){\makebox(0,0)[lt]{\lineheight{1.45000005}\smash{\begin{tabular}[t]{l}$u$\end{tabular}}}}%
    \put(0,0){\includegraphics[width=\unitlength,page=3]{uK22.pdf}}%
  \end{picture}%
\endgroup%
}$}$.
   Since the box notation satisfies $\centre{
\begingroup%
  \makeatletter%
  \providecommand\color[2][]{%
    \errmessage{(Inkscape) Color is used for the text in Inkscape, but the package 'color.sty' is not loaded}%
    \renewcommand\color[2][]{}%
  }%
  \providecommand\transparent[1]{%
    \errmessage{(Inkscape) Transparency is used (non-zero) for the text in Inkscape, but the package 'transparent.sty' is not loaded}%
    \renewcommand\transparent[1]{}%
  }%
  \providecommand\rotatebox[2]{#2}%
  \newcommand*\fsize{\dimexpr\f@size pt\relax}%
  \newcommand*\lineheight[1]{\fontsize{\fsize}{#1\fsize}\selectfont}%
  \ifx\svgwidth\undefined%
    \setlength{\unitlength}{34.12500065bp}%
    \ifx\svgscale\undefined%
      \relax%
    \else%
      \setlength{\unitlength}{\unitlength * \real{\svgscale}}%
    \fi%
  \else%
    \setlength{\unitlength}{\svgwidth}%
  \fi%
  \global\let\svgwidth\undefined%
  \global\let\svgscale\undefined%
  \makeatother%
  \begin{picture}(1,0.89946402)%
    \lineheight{1}%
    \setlength\tabcolsep{0pt}%
    \put(0,0){\includegraphics[width=\unitlength,page=1]{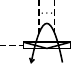}}%
  \end{picture}%
\endgroup%
}=\centre{
\begingroup%
  \makeatletter%
  \providecommand\color[2][]{%
    \errmessage{(Inkscape) Color is used for the text in Inkscape, but the package 'color.sty' is not loaded}%
    \renewcommand\color[2][]{}%
  }%
  \providecommand\transparent[1]{%
    \errmessage{(Inkscape) Transparency is used (non-zero) for the text in Inkscape, but the package 'transparent.sty' is not loaded}%
    \renewcommand\transparent[1]{}%
  }%
  \providecommand\rotatebox[2]{#2}%
  \newcommand*\fsize{\dimexpr\f@size pt\relax}%
  \newcommand*\lineheight[1]{\fontsize{\fsize}{#1\fsize}\selectfont}%
  \ifx\svgwidth\undefined%
    \setlength{\unitlength}{34.12500065bp}%
    \ifx\svgscale\undefined%
      \relax%
    \else%
      \setlength{\unitlength}{\unitlength * \real{\svgscale}}%
    \fi%
  \else%
    \setlength{\unitlength}{\svgwidth}%
  \fi%
  \global\let\svgwidth\undefined%
  \global\let\svgscale\undefined%
  \makeatother%
  \begin{picture}(1,0.89946402)%
    \lineheight{1}%
    \setlength\tabcolsep{0pt}%
    \put(0,0){\includegraphics[width=\unitlength,page=1]{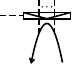}}%
  \end{picture}%
\endgroup%
}$, by pulling the first arc component to the right, we have 
   $$u\cdot \sigma_{x_1}=\centre{
\begingroup%
  \makeatletter%
  \providecommand\color[2][]{%
    \errmessage{(Inkscape) Color is used for the text in Inkscape, but the package 'color.sty' is not loaded}%
    \renewcommand\color[2][]{}%
  }%
  \providecommand\transparent[1]{%
    \errmessage{(Inkscape) Transparency is used (non-zero) for the text in Inkscape, but the package 'transparent.sty' is not loaded}%
    \renewcommand\transparent[1]{}%
  }%
  \providecommand\rotatebox[2]{#2}%
  \newcommand*\fsize{\dimexpr\f@size pt\relax}%
  \newcommand*\lineheight[1]{\fontsize{\fsize}{#1\fsize}\selectfont}%
  \ifx\svgwidth\undefined%
    \setlength{\unitlength}{105.29880857bp}%
    \ifx\svgscale\undefined%
      \relax%
    \else%
      \setlength{\unitlength}{\unitlength * \real{\svgscale}}%
    \fi%
  \else%
    \setlength{\unitlength}{\svgwidth}%
  \fi%
  \global\let\svgwidth\undefined%
  \global\let\svgscale\undefined%
  \makeatother%
  \begin{picture}(1,0.63683198)%
    \lineheight{1}%
    \setlength\tabcolsep{0pt}%
    \put(0,0){\includegraphics[width=\unitlength,page=1]{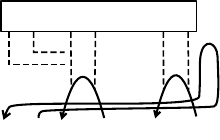}}%
    \put(0.0613269,0.00882228){\makebox(0,0)[lt]{\lineheight{1.45000005}\smash{\begin{tabular}[t]{l}$1$\end{tabular}}}}%
    \put(0.34348328,0.01004021){\makebox(0,0)[lt]{\lineheight{1.45000005}\smash{\begin{tabular}[t]{l}$2$\end{tabular}}}}%
    \put(0.76867393,0.01434453){\makebox(0,0)[lt]{\lineheight{1.45000005}\smash{\begin{tabular}[t]{l}$n$\end{tabular}}}}%
    \put(0,0){\includegraphics[width=\unitlength,page=2]{uK23.pdf}}%
    \put(0.54076533,0.01426736){\makebox(0,0)[lt]{\lineheight{1.45000005}\smash{\begin{tabular}[t]{l}$\cdots$\end{tabular}}}}%
    \put(0.39796955,0.53824668){\makebox(0,0)[lt]{\lineheight{1.45000005}\smash{\begin{tabular}[t]{l}$u$\end{tabular}}}}%
    \put(0,0){\includegraphics[width=\unitlength,page=3]{uK23.pdf}}%
  \end{picture}%
\endgroup%
}.$$
   Moreover, by pulling the leftmost dashed line up, we obtain 
   $$u\cdot \sigma_{x_1} =\centre{
\begingroup%
  \makeatletter%
  \providecommand\color[2][]{%
    \errmessage{(Inkscape) Color is used for the text in Inkscape, but the package 'color.sty' is not loaded}%
    \renewcommand\color[2][]{}%
  }%
  \providecommand\transparent[1]{%
    \errmessage{(Inkscape) Transparency is used (non-zero) for the text in Inkscape, but the package 'transparent.sty' is not loaded}%
    \renewcommand\transparent[1]{}%
  }%
  \providecommand\rotatebox[2]{#2}%
  \newcommand*\fsize{\dimexpr\f@size pt\relax}%
  \newcommand*\lineheight[1]{\fontsize{\fsize}{#1\fsize}\selectfont}%
  \ifx\svgwidth\undefined%
    \setlength{\unitlength}{112.13948329bp}%
    \ifx\svgscale\undefined%
      \relax%
    \else%
      \setlength{\unitlength}{\unitlength * \real{\svgscale}}%
    \fi%
  \else%
    \setlength{\unitlength}{\svgwidth}%
  \fi%
  \global\let\svgwidth\undefined%
  \global\let\svgscale\undefined%
  \makeatother%
  \begin{picture}(1,0.82320112)%
    \lineheight{1}%
    \setlength\tabcolsep{0pt}%
    \put(0,0){\includegraphics[width=\unitlength,page=1]{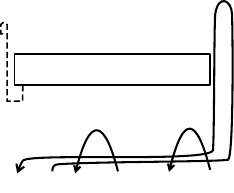}}%
    \put(0.11665926,0.00484196){\makebox(0,0)[lt]{\lineheight{1.45000005}\smash{\begin{tabular}[t]{l}$1$\end{tabular}}}}%
    \put(0.38160369,0.00598549){\makebox(0,0)[lt]{\lineheight{1.45000005}\smash{\begin{tabular}[t]{l}$2$\end{tabular}}}}%
    \put(0.7808571,0.01002725){\makebox(0,0)[lt]{\lineheight{1.45000005}\smash{\begin{tabular}[t]{l}$n$\end{tabular}}}}%
    \put(0.56685127,0.00995483){\makebox(0,0)[lt]{\lineheight{1.45000005}\smash{\begin{tabular}[t]{l}$\cdots$\end{tabular}}}}%
    \put(0,0){\includegraphics[width=\unitlength,page=2]{uK3.pdf}}%
    \put(0.39970209,0.50925388){\makebox(0,0)[lt]{\lineheight{1.45000005}\smash{\begin{tabular}[t]{l}$u$\end{tabular}}}}%
    \put(0,0){\includegraphics[width=\unitlength,page=3]{uK3.pdf}}%
  \end{picture}%
\endgroup%
}.$$
   By iterating similar operations, we obtain
   \begin{gather*}
     u\cdot \sigma_{x_1}
     =\centre{
\begingroup%
  \makeatletter%
  \providecommand\color[2][]{%
    \errmessage{(Inkscape) Color is used for the text in Inkscape, but the package 'color.sty' is not loaded}%
    \renewcommand\color[2][]{}%
  }%
  \providecommand\transparent[1]{%
    \errmessage{(Inkscape) Transparency is used (non-zero) for the text in Inkscape, but the package 'transparent.sty' is not loaded}%
    \renewcommand\transparent[1]{}%
  }%
  \providecommand\rotatebox[2]{#2}%
  \newcommand*\fsize{\dimexpr\f@size pt\relax}%
  \newcommand*\lineheight[1]{\fontsize{\fsize}{#1\fsize}\selectfont}%
  \ifx\svgwidth\undefined%
    \setlength{\unitlength}{119.63948339bp}%
    \ifx\svgscale\undefined%
      \relax%
    \else%
      \setlength{\unitlength}{\unitlength * \real{\svgscale}}%
    \fi%
  \else%
    \setlength{\unitlength}{\svgwidth}%
  \fi%
  \global\let\svgwidth\undefined%
  \global\let\svgscale\undefined%
  \makeatother%
  \begin{picture}(1,0.77482238)%
    \lineheight{1}%
    \setlength\tabcolsep{0pt}%
    \put(0,0){\includegraphics[width=\unitlength,page=1]{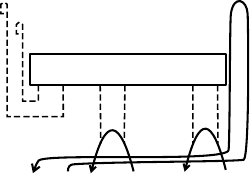}}%
    \put(0.17203442,0.00776479){\makebox(0,0)[lt]{\lineheight{1.45000005}\smash{\begin{tabular}[t]{l}$1$\end{tabular}}}}%
    \put(0.42036992,0.00883664){\makebox(0,0)[lt]{\lineheight{1.45000005}\smash{\begin{tabular}[t]{l}$2$\end{tabular}}}}%
    \put(0.7945948,0.01262502){\makebox(0,0)[lt]{\lineheight{1.45000005}\smash{\begin{tabular}[t]{l}$n$\end{tabular}}}}%
    \put(0,0){\includegraphics[width=\unitlength,page=2]{uK31.pdf}}%
    \put(0.59400464,0.01255715){\makebox(0,0)[lt]{\lineheight{1.45000005}\smash{\begin{tabular}[t]{l}$\cdots$\end{tabular}}}}%
    \put(0,0){\includegraphics[width=\unitlength,page=3]{uK31.pdf}}%
    \put(0.43733377,0.48055597){\makebox(0,0)[lt]{\lineheight{1.45000005}\smash{\begin{tabular}[t]{l}$u$\end{tabular}}}}%
    \put(0,0){\includegraphics[width=\unitlength,page=4]{uK31.pdf}}%
  \end{picture}%
\endgroup%
}=\centre{
\begingroup%
  \makeatletter%
  \providecommand\color[2][]{%
    \errmessage{(Inkscape) Color is used for the text in Inkscape, but the package 'color.sty' is not loaded}%
    \renewcommand\color[2][]{}%
  }%
  \providecommand\transparent[1]{%
    \errmessage{(Inkscape) Transparency is used (non-zero) for the text in Inkscape, but the package 'transparent.sty' is not loaded}%
    \renewcommand\transparent[1]{}%
  }%
  \providecommand\rotatebox[2]{#2}%
  \newcommand*\fsize{\dimexpr\f@size pt\relax}%
  \newcommand*\lineheight[1]{\fontsize{\fsize}{#1\fsize}\selectfont}%
  \ifx\svgwidth\undefined%
    \setlength{\unitlength}{116.63948329bp}%
    \ifx\svgscale\undefined%
      \relax%
    \else%
      \setlength{\unitlength}{\unitlength * \real{\svgscale}}%
    \fi%
  \else%
    \setlength{\unitlength}{\svgwidth}%
  \fi%
  \global\let\svgwidth\undefined%
  \global\let\svgscale\undefined%
  \makeatother%
  \begin{picture}(1,0.79475102)%
    \lineheight{1}%
    \setlength\tabcolsep{0pt}%
    \put(0,0){\includegraphics[width=\unitlength,page=1]{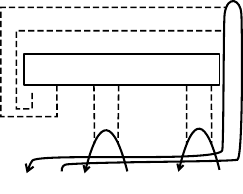}}%
    \put(0.15073892,0.00796451){\makebox(0,0)[lt]{\lineheight{1.45000005}\smash{\begin{tabular}[t]{l}$1$\end{tabular}}}}%
    \put(0.40546167,0.00906392){\makebox(0,0)[lt]{\lineheight{1.45000005}\smash{\begin{tabular}[t]{l}$2$\end{tabular}}}}%
    \put(0.78931173,0.01294974){\makebox(0,0)[lt]{\lineheight{1.45000005}\smash{\begin{tabular}[t]{l}$n$\end{tabular}}}}%
    \put(0,0){\includegraphics[width=\unitlength,page=2]{uK4.pdf}}%
    \put(0.58356232,0.01288012){\makebox(0,0)[lt]{\lineheight{1.45000005}\smash{\begin{tabular}[t]{l}$\cdots$\end{tabular}}}}%
    \put(0,0){\includegraphics[width=\unitlength,page=3]{uK4.pdf}}%
    \put(0.42286184,0.492916){\makebox(0,0)[lt]{\lineheight{1.45000005}\smash{\begin{tabular}[t]{l}$u$\end{tabular}}}}%
  \end{picture}%
\endgroup%
}=u.
   \end{gather*}
   This completes the proof.
  \end{proof}

 \subsection{$\IA(n)$-action on $A_d(n)$ for $n=1,2$}\label{ss50}
  For $n=1$, we have $\IA(1)=1$. Therefore, the $\IA(1)$-action on $A_d(1)$ is trivial.

  We use the following fact due to Nielsen \cite{Nielsen} and Magnus \cite{Magnus19}. See also \cite{Magnus} for the statement.
  \begin{theorem}[Nielsen ($n\leq 3$), Magnus (for all $n$)] \label{f611}
    Let $n\geq 2$. The IA-automorphism group $\IA(n)$ is normally generated in $\Aut(F_n)$ by an element
    $K_{2,1}$ defined by
    $$K_{2,1}(x_2)=x_1x_2x_1^{-1},\quad K_{2,1}(x_j)=x_j\quad \text{for }j\neq 2.$$
  \end{theorem}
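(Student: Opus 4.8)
The statement is the classical Nielsen--Magnus theorem, so the plan is to deduce it from Magnus's finite-generation result for $\IA(n)$, which I would take as the essential input: $\IA(n)$ is generated as a group by the automorphisms $K_{ij}\colon x_i\mapsto x_jx_ix_j^{-1}$ (for $i\neq j$) and $K_{ijk}\colon x_i\mapsto x_i[x_j,x_k]$ (for $i,j,k$ pairwise distinct), each fixing the remaining free generators. Granting this, it is enough to show that every such generator lies in the normal closure $N$ of the single element $K_{2,1}$ in $\Aut(F_n)$. Indeed, this gives $\IA(n)\subseteq N$, while conversely $K_{2,1}\in\IA(n)$ and $\IA(n)$ is normal in $\Aut(F_n)$ (it is the kernel of $\Aut(F_n)\to\GL(n;\Z)$), so $N\subseteq\IA(n)$; hence $\IA(n)=N$, which is the assertion. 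For $n=2$ there are no generators of the form $K_{ijk}$, and the claim reduces to the first step below.

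First I would treat the $K_{ij}$. Given $i\neq j$, choose a permutation $\sigma$ of $\{1,\dots,n\}$ with $\sigma(2)=i$ and $\sigma(1)=j$, and let $P_\sigma\in\Aut(F_n)$ be the permutation automorphism $x_l\mapsto x_{\sigma(l)}$. Evaluating on the free generators shows $P_\sigma K_{2,1}P_\sigma^{-1}=K_{ij}$; thus every $K_{ij}$ is an $\Aut(F_n)$-conjugate of $K_{2,1}$ and in particular lies in $N$.

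Next I would treat the $K_{ijk}$, by conjugating a suitable $K_{jk}$ by a Nielsen transvection. Fix pairwise distinct $i,j,k$, and let $\tau\in\Aut(F_n)$ be the automorphism $x_i\mapsto x_ix_j$ fixing the other generators (so $\tau^{-1}$ is $x_i\mapsto x_ix_j^{-1}$). Evaluating on generators --- the only nonobvious line is on $x_i$, where $\tau^{-1}(x_i)=x_ix_j^{-1}$, then $K_{jk}$ sends this to $x_i\,x_kx_j^{-1}x_k^{-1}$, and then $\tau$ sends it to $x_ix_jx_kx_j^{-1}x_k^{-1}=x_i[x_j,x_k]$ --- one obtains the identity $\tau K_{jk}\tau^{-1}=K_{ijk}\circ K_{jk}$, and therefore $K_{ijk}=(\tau K_{jk}\tau^{-1})\circ K_{jk}^{-1}$. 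Since $K_{jk}\in N$ by the previous step and $N$ is normal in $\Aut(F_n)$, both factors on the right lie in $N$, hence so does $K_{ijk}$. This exhausts Magnus's generating set and finishes the argument.

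The only real obstacle here is the input itself: proving that the $K_{ij}$ and $K_{ijk}$ generate $\IA(n)$ is the substantial theorem (Nielsen for $n\leq 3$, Magnus in general), and a self-contained proof would have to go through the Magnus embedding and Fox free differential calculus to control automorphisms acting trivially on homology. Since the present paper only needs the normal-generation statement and cites Nielsen--Magnus for it, I would simply invoke that result and record the two conjugation computations above as the content of this proof.
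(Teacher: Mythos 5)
Your argument is correct, but note that the paper itself offers no proof of this statement: it is quoted as a classical fact with references to Nielsen and Magnus, so there is no ``paper's proof'' to match. What you have done is reduce the normal-generation statement to the (equally classical, and genuinely deep) Magnus generation theorem that $\IA(n)$ is generated by the $K_{ij}$ and $K_{ijk}$, and then supply the two conjugation identities needed to place each generator in the normal closure $N$ of $K_{2,1}$. Both identities check out: $P_\sigma K_{2,1}P_\sigma^{-1}=K_{ij}$ for any permutation automorphism with $\sigma(2)=i$, $\sigma(1)=j$, and $\tau K_{jk}\tau^{-1}=K_{ijk}\circ K_{jk}$ for the transvection $\tau\colon x_i\mapsto x_ix_j$ (on $x_i$ one gets $x_i[x_j,x_k]$ as you compute; on $x_j$ both sides give $x_kx_jx_k^{-1}$; all other generators are fixed), whence $K_{ijk}=(\tau K_{jk}\tau^{-1})K_{jk}^{-1}\in N$. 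The reverse inclusion $N\subseteq\IA(n)$ from normality of $\IA(n)$ is also handled, and the $n=2$ degeneration is correctly noted. You are right that the honest mathematical content remains the cited generation theorem of Nielsen--Magnus; your write-up makes explicit the elementary bridge from that statement to the normal-generation form the paper actually uses, which is a reasonable thing to record even though the paper chooses simply to cite the result.
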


  For $n=2$, we have $\IA(2)=\Inn(F_2)$ by the above theorem. Therefore, we have the following corollary of Theorem \ref{out}.
  \begin{corollary}\label{IA2}
    The $\IA(2)$-action on $A_d(2)$ is trivial for any $d\geq 0$.
    Therefore, the $\Aut(F_2)$-action on $A_d(2)$ induces an action of $\GL(2;\Z)$ on $A_d(2)$.
  \end{corollary}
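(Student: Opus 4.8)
The plan is to reduce the statement to Theorem \ref{out} via the identification $\IA(2)=\Inn(F_2)$. First I would observe that in rank two the normal generator $K_{2,1}$ of Theorem \ref{f611} coincides with the inner automorphism $\sigma_{x_1}$: the only index $j$ with $j\neq 2$ is $j=1$, and $K_{2,1}(x_1)=x_1=x_1x_1x_1^{-1}$ while $K_{2,1}(x_2)=x_1x_2x_1^{-1}$, so $K_{2,1}=\sigma_{x_1}\in\Inn(F_2)$. Since $\Inn(F_2)$ is a normal subgroup of $\Aut(F_2)$, the normal closure in $\Aut(F_2)$ of $K_{2,1}$ is contained in $\Inn(F_2)$; by Theorem \ref{f611} this normal closure is all of $\IA(2)$, hence $\IA(2)\subseteq\Inn(F_2)$. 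Combined with the inclusion $\Inn(F_2)\subseteq\IA(2)$ recalled before Theorem \ref{out}, this yields $\IA(2)=\Inn(F_2)$.

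Next I would invoke Theorem \ref{out}, which asserts that the $\Inn(F_n)$-action on $A_d(n)$ is trivial for all $n,d\geq 0$. Specializing to $n=2$ and using $\IA(2)=\Inn(F_2)$, the $\IA(2)$-action on $A_d(2)$ is trivial for every $d\geq 0$. Finally, since $\IA(2)$ is by definition the kernel of the canonical surjection $\Aut(F_2)\twoheadrightarrow\GL(2;\Z)$ and this kernel acts trivially on $A_d(2)$, the right $\Aut(F_2)$-action on $A_d(2)$ factors through $\Aut(F_2)/\IA(2)\cong\GL(2;\Z)$, giving the claimed $\GL(2;\Z)$-action.

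I do not expect any real obstacle: the corollary is a formal consequence of Theorems \ref{f611} and \ref{out}. The only points needing a moment's care are checking the equality $K_{2,1}=\sigma_{x_1}$ for $n=2$, and recalling that the kernel of $\Aut(F_2)\to\GL(2;\Z)$ is exactly $\IA(2)$ by definition, so that the descent of the action to $\GL(2;\Z)$ is automatic once triviality on $\IA(2)$ is established.
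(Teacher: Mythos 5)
Your proof is correct and follows essentially the same route as the paper: the paper also deduces $\IA(2)=\Inn(F_2)$ from Theorem \ref{f611} (noting $K_{2,1}=\sigma_{x_1}$ in rank two) and then applies Theorem \ref{out}. You have merely spelled out the details the paper leaves implicit.
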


 \subsection{Bracket map $[\cdot,\cdot]: B_{d,k}(n)\otimes_{\Z} \opegr^r(\IA(n)) \rightarrow B_{d,k+r}(n)$}\label{ss51}
  We define
  \begin{equation}\label{braA}
    [\cdot,\cdot]:A_d(n)\times\IA(n)\rightarrow A_d(n)
  \end{equation}
  by $[u,g]:=u\cdot g-u$ for $u\in A_d(n)$, $g\in\IA(n)$, which we call the \emph{bracket map}.
  \begin{lemma}\label{l611}
   Let $k\geq 0$.
   We have $$[A_{d,k}(n), \IA(n)]\subset A_{d,k+1}(n).$$
  \end{lemma}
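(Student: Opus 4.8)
The plan is to reduce the statement to a single generator of $\IA(n)$ using the normal generation result of Nielsen--Magnus (Theorem \ref{f611}), then verify the bracket inclusion by a direct diagrammatic computation. First I would observe that for $g,h\in\IA(n)$ and $u\in A_{d,k}(n)$ one has the cocycle-type identity
\[
  [u,gh]=[u,g]\cdot h+[u,h],
\]
and, since $A_d(n)$ is an $\Aut(F_n)$-module preserving the filtration, $[u\cdot f,\,f^{-1}gf]=[u,g]\cdot f$ for any $f\in\Aut(F_n)$. Because the filtration is $\Aut(F_n)$-stable and $A_{d,k+1}(n)$ is a submodule, these identities show that the set of $g\in\IA(n)$ with $[A_{d,k}(n),g]\subset A_{d,k+1}(n)$ is a normal subgroup of $\Aut(F_n)$. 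Hence by Theorem \ref{f611} it suffices to check $[u, K_{2,1}]\in A_{d,k+1}(n)$ for all $u\in A_{d,k}(n)$.

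Next I would compute $u\cdot K_{2,1}$ explicitly. The automorphism $K_{2,1}$ sends $x_2\mapsto x_1x_2x_1^{-1}$ and fixes the other generators, so under the functor $A_d$ the element $u\cdot K_{2,1}=A_d(K_{2,1})(u)$ is obtained from $u$ by the cabling operation on the second arc component of $X_n$ corresponding to the word $x_1x_2x_1^{-1}$: concretely, each univalent vertex of $u$ lying on the second component is replaced by the appropriate signed sum of copies distributed over the three parallel strands labelled $1$, $2$, $1$ (the middle one retained, the two outer ones with opposite signs, via the box notation of Figure \ref{box}). The identity summand — in which every univalent vertex stays on the ``middle'' strand — reproduces $u$ itself, so $[u,K_{2,1}]=u\cdot K_{2,1}-u$ is a linear combination of diagrams each of which has at least one univalent vertex moved onto one of the two new strands created by the handle sliding. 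The key point is that such a move, done via the STU/box relation, strictly increases the number of trivalent vertices: resolving the box in the STU relation trades one univalent vertex on an arc for a new trivalent vertex. Thus every nonzero term of $[u,K_{2,1}]$ has at least $k+1$ trivalent vertices, giving $[u,K_{2,1}]\in A_{d,k+1}(n)$.

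The main obstacle I expect is making the ``cabling of $K_{2,1}$ increases trivalent-vertex count'' step precise at the level of diagrams: one must track the box notation carefully and confirm that the only summand of the cabled diagram with exactly $k$ trivalent vertices is the one equal to $u$, and that every other summand genuinely acquires a fresh trivalent vertex (rather than, say, producing a diagram that re-simplifies back into $A_{d,k}(n)\setminus A_{d,k+1}(n)$ after applying STU). This should follow from the description of composition in $\A$ and the fact that conjugating the $x_1^{-1}\cdots x_1$ strands around a handle forces at least one STU resolution per displaced univalent vertex, but the bookkeeping is where the care is needed. A secondary, more routine point is checking the cocycle and conjugation identities for the bracket map and confirming that they indeed yield a normal subgroup; this is a short formal computation using $u\cdot(gh)=(u\cdot g)\cdot h$ and the $\Aut(F_n)$-invariance of the filtration established in Section \ref{ss32}.
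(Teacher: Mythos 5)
Your reduction to the single normal generator $K_{2,1}$ --- via the cocycle identity $[u,gh]=[u,g]\cdot h+[u,h]$, the conjugation identity $[u\cdot f,f^{-1}gf]=[u,g]\cdot f$, and Theorem \ref{f611} --- is correct and is exactly how the paper's direct proof proceeds. (The paper also records a one-line alternative: by Proposition \ref{prop341} the induced action on $\gr(A_d(n))\cong B_d(n)$ factors through abelianization, hence through $\GL(n;\Z)=\Aut(F_n)/\IA(n)$, so $\IA(n)$ acts trivially on each graded quotient, which is precisely the statement of the lemma.)

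The gap is in the diagrammatic step. Your key claim --- that ``resolving the box in the STU relation trades one univalent vertex on an arc for a new trivalent vertex,'' so that every non-identity summand of $u\cdot K_{2,1}$ already has at least $k+1$ trivalent vertices --- is false. A box summand in which the incoming dashed edge attaches to a \emph{solid} strand merely relocates that univalent vertex onto that strand; no trivalent vertex is created (only attachment to a dashed line creates one). Hence each non-identity summand still has exactly $k$ trivalent vertices and, taken individually, does \emph{not} lie in $A_{d,k+1}(n)$. A sanity check that your mechanism cannot be right: it nowhere uses that the signed count of $x_1$ in $x_1x_2x_1^{-1}$ vanishes, i.e., that $K_{2,1}\in\IA(n)$; applied verbatim to $U_{1,2}\colon x_1\mapsto x_1x_2$ it would show that all of $\Aut(F_n)$ acts trivially on $\gr(A_d(n))$, contradicting the nontrivial $\GL(n;\Z)$-action on $B_d(n)$. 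The correct mechanism is a signed cancellation: each univalent vertex of $u$ on the \emph{first} component (a minor slip in your write-up: it is the first component's vertices that get redistributed, since handle $1$ is the handle traversed three times) is spread over three parallel strands, one belonging to component $1$ and two belonging to component $2$ with \emph{opposite} signs. Modulo $A_{d,k+1}(n)$, the position at which a univalent vertex is attached to a given component is immaterial, because two choices of position differ by STU terms having one more trivalent vertex; so the two oppositely-signed contributions to component $2$ cancel modulo $A_{d,k+1}(n)$, each box collapses to its identity summand, and $u\cdot K_{2,1}\equiv u\pmod{A_{d,k+1}(n)}$. This cancellation argument is what must replace the ``fresh trivalent vertex per displaced vertex'' assertion.
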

  \begin{proof}
   If we identify the associated graded vector space $\gr(A_d(n))$ with $B_d(n)$, then Proposition \ref{prop341} implies that the $\Aut(F_n)$-action on $A_d(n)$ induces the $\GL(n;\Z)$-action on $B_d(n)$. It follows that the restriction of the $\Aut(F_n)$-action on $\gr(A_d(n))$ to $\IA(n)$ is trivial. This implies that $[A_{d,k}(n), \IA(n)]\subset A_{d,k+1}(n)$.
  \end{proof}

  The following lemma easily follows from the definition of the bracket map.
  \begin{lemma}\label{l612}
   \begin{enumerate}
      \item For $g,h\in\IA(n)$, $u\in A_{d,k}(n)$, we have
         $$[u,gh]=[u,g]+[u,h]+[[u,g],h].$$\label{l6121}
      \item For $g\in\IA(n)$, $u\in A_{d,k}(n)$, we have
         $$[u,g^{-1}]=-[u,g]-[[u,g],g^{-1}].$$\label{l6122}
   \end{enumerate}
  \end{lemma}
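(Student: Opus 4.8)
The plan is to derive both identities purely formally from the definition $[u,g] = u\cdot g - u$ of the bracket map in~(\ref{braA}), using only that $g\mapsto A_d(g)$ defines a \emph{right} action of $\Aut(F_n)$ on $A_d(n)$ by $\K$-linear maps. In particular, neither the combinatorics of Jacobi diagrams nor Lemma~\ref{l611} will be needed.

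Before computing, I would record two trivial facts. Since each $A_d(h)$ is $\K$-linear, the bracket is additive in its first argument: $[u_1+u_2,h] = [u_1,h]+[u_2,h]$. And, directly from the definition, $w\cdot h = w + [w,h]$ for every $w\in A_d(n)$.

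For part~(1), the key input is that the action is on the \emph{right}, so $u\cdot(gh) = (u\cdot g)\cdot h$. The computation then reads
\begin{align*}
  [u,gh] &= (u\cdot g)\cdot h - u = \bigl(u\cdot g + [u\cdot g, h]\bigr) - u = [u,g] + [u\cdot g, h] \\
         &= [u,g] + [u + [u,g],\, h] = [u,g] + [u,h] + [[u,g],h],
\end{align*}
where the third equality uses $u\cdot g = u + [u,g]$ and the last uses additivity of the bracket in its first slot.

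For part~(2), I would specialize part~(1) to $h = g^{-1}$, which lies in $\IA(n)$ because $\IA(n)$ is a group. The left-hand side becomes $[u, g g^{-1}] = [u,1] = u\cdot 1 - u = 0$, since $A_d$ is a functor and hence sends the identity automorphism to the identity map. Therefore $0 = [u,g] + [u,g^{-1}] + [[u,g],g^{-1}]$, which rearranges to the assertion. There is essentially no obstacle here; the only points deserving attention are bookkeeping ones — that $A_d$ gives a right rather than a left action, so $u\cdot(gh)$ unfolds as $(u\cdot g)\cdot h$, and that linearity of $A_d(g)$ is exactly what legitimizes splitting $[u+[u,g],h]$ into $[u,h]+[[u,g],h]$.
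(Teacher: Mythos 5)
Your proof is correct, and it is exactly the argument the paper has in mind: the paper states that the lemma ``easily follows from the definition of the bracket map'' and omits the computation, which is precisely the formal manipulation you carry out using the right-action identity $u\cdot(gh)=(u\cdot g)\cdot h$, linearity of $A_d(h)$, and (for part (2)) the specialization $h=g^{-1}$ together with $[u,1]=0$.
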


  \begin{proposition} \label{p611}
   The bracket map (\ref{braA}) induces a map
   \begin{equation}\label{braB}
     [\cdot, \cdot]:B_{d,k}(n)\times\IA(n)\rightarrow B_{d,k+1}(n).
   \end{equation}
   The map
   $$\beta_{d,k}:\IA(n)\rightarrow \Hom(B_{d,k}(n),B_{d,k+1}(n))$$
   defined by $\beta_{d,k}(g)(u)=[u,g]$ for $g\in\IA(n),u\in B_{d,k}(n)$ is a group homomorphism.
  \end{proposition}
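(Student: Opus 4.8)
The plan is to verify the two assertions of Proposition~\ref{p611} in order: first that the bracket map $[u,g]=u\cdot g-u$ descends to a well-defined map $B_{d,k}(n)\times\IA(n)\to B_{d,k+1}(n)$ via the PBW identification $B_{d,k}(n)\cong\opegr^k(A_d(n))$, and second that, with this definition, $\beta_{d,k}(gh)=\beta_{d,k}(g)+\beta_{d,k}(h)$ as homomorphisms into $\Hom(B_{d,k}(n),B_{d,k+1}(n))$. For the first part, I would argue as follows. By Lemma~\ref{l611} we have $[A_{d,k}(n),\IA(n)]\subset A_{d,k+1}(n)$, so for $u\in A_{d,k}(n)$ the class $[u,g]$ lives in $A_{d,k+1}(n)$, and we may consider its image in $\opegr^{k+1}(A_d(n))\cong B_{d,k+1}(n)$. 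It remains to check this image depends only on the class of $u$ in $\opegr^k(A_d(n))$, i.e. that if $u\in A_{d,k+1}(n)$ then $[u,g]\in A_{d,k+2}(n)$. But this is again Lemma~\ref{l611} applied with $k$ replaced by $k+1$. Hence $\beta_{d,k}(g)\colon B_{d,k}(n)\to B_{d,k+1}(n)$, $[u]_{k}\mapsto[[u,g]]_{k+1}$, is well defined and clearly $\K$-linear.

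For the homomorphism property, the key input is Lemma~\ref{l612}(1): for $g,h\in\IA(n)$ and $u\in A_{d,k}(n)$,
$$[u,gh]=[u,g]+[u,h]+[[u,g],h].$$
Here $[u,g]\in A_{d,k+1}(n)$ by Lemma~\ref{l611}, so a further application of Lemma~\ref{l611} gives $[[u,g],h]\in A_{d,k+2}(n)$. Passing to $\opegr^{k+1}(A_d(n))$, the correction term dies and we obtain $\beta_{d,k}(gh)=\beta_{d,k}(g)+\beta_{d,k}(h)$ in $\Hom(B_{d,k}(n),B_{d,k+1}(n))$. Since $\Hom(B_{d,k}(n),B_{d,k+1}(n))$ is an abelian group under addition and $\beta_{d,k}$ is multiplicative into it with $\beta_{d,k}(1)=0$ (as $[u,1]=0$), this is exactly the statement that $\beta_{d,k}$ is a group homomorphism; compatibility with inverses then follows automatically, or directly from Lemma~\ref{l612}(2) by the same degree-shift argument.

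The routine part is the bookkeeping of which filtration stratum each term lands in; the only genuinely load-bearing facts are the degree-raising property of the bracket (Lemma~\ref{l611}) and the quasi-derivation identity (Lemma~\ref{l612}(1)), both already in hand. I do not anticipate a serious obstacle: everything is a formal consequence of those two lemmas together with the PBW identification of Lemma~\ref{PBW'}. The one point requiring a little care is making sure the definition of $\beta_{d,k}(g)$ on $B_{d,k}(n)$ via the isomorphism $\theta_{d,n,k}$ is stated consistently throughout — i.e. that we are computing $[u,g]$ for a \emph{lift} $u\in A_{d,k}(n)$ of a given class in $B_{d,k}(n)$ and reading off the answer in $B_{d,k+1}(n)$ — but once the well-definedness above is established this is unambiguous.
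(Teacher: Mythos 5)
Your proposal is correct and follows essentially the same route as the paper: well-definedness on the associated graded via Lemma \ref{l611} (applied in degrees $k$ and $k+1$), then the homomorphism property from the identity $[u,gh]=[u,g]+[u,h]+[[u,g],h]$ of Lemma \ref{l612}, with the correction term killed in $\opegr^{k+1}$ because it lies in $A_{d,k+2}(n)$. The only difference is that you spell out the well-definedness check explicitly, which the paper leaves implicit.
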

  \begin{proof}
   By Lemma \ref{l611}, the map (\ref{braA}) induces a map
   $$
   [\cdot,\cdot]:\opegr^k(A_{d,\ast}(n))\times \IA(n)\rightarrow \opegr^{k+1}(A_{d,\ast}(n)).
   $$
   By identifying $\opegr^{k}(A_{d,\ast}(n))$ with $B_{d,k}(n)$ via the PBW map, we have a map (\ref{braB}).
   Since we have $[u,gh]=[u,g]+[u,h]+[[u,g],h]$ and $[[u,g],h]\in A_{d,k+2}(n)$ for $g,h\in \IA(n), u\in A_{d,k}(n)$ by Lemmas \ref{l611} and \ref{l612}, it follows that
   $$\beta_{d,k}(gh)(u)=[u,gh]=[u,g]+[u,h] =\beta_{d,k}(g)(u) + \beta_{d,k}(h)(u)$$
   for $g,h\in \IA(n), u\in B_{d,k}(n)$, so the map $\beta_{d,k}$ is a group homomorphism.
  \end{proof}

  Now we consider the lower central series $\Gamma_{\ast}(\IA(n))$ of $\IA(n)$:
  $$
    \IA(n)=\Gamma_1(\IA(n))\rhd \Gamma_2(\IA(n))\rhd \cdots,
  $$
  where $\Gamma_{r+1}(\IA(n))= [\Gamma_{r}(\IA(n)), \IA(n)]$ for $r\geq 1$.
  Note that the commutator bracket $[x,y]$ of $x$ and $y$ is defined to be $[x,y]:=xyx^{-1}y^{-1}$ for elements $x,y$ of a group.
  Let $\gr(\IA(n)):=\bigoplus_{r\geq 1}\opegr^r(\IA(n))=\bigoplus_{r\geq 1}\Gamma_{r}(\IA(n))/\Gamma_{r+1}(\IA(n))$ denote the associated graded Lie algebra with respect to the lower central series of $\IA(n)$.
  We improve the bracket map (\ref{braB}) and the map $\beta_{d,k}$ by restricting the maps to the lower central series.

  \begin{lemma}\label{l613}
   Let $r\geq 1$.
   We have
   \begin{equation}\label{eqbr}
     [A_{d,k}(n),\Gamma_r(\IA(n))]\subset A_{d,k+r}(n).
   \end{equation}
  \end{lemma}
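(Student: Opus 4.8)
The plan is to induct on $r$; the base case $r=1$ is exactly Lemma~\ref{l611}, so suppose $r\ge 1$ and that $[A_{d,k}(n),\Gamma_r(\IA(n))]\subseteq A_{d,k+r}(n)$ holds for every $k\ge 0$. I would reformulate everything in operator language. For $g\in\Aut(F_n)$ let $T_g\colon A_d(n)\to A_d(n)$ be $T_g(u):=u\cdot g$; since this is a right action, $T_{gh}=T_h\circ T_g$ and $[u,g]=(T_g-\id)(u)$. On $R:=\End_{\K}(A_d(n))$ introduce the filtration
$$R=R_0\supseteq R_1\supseteq R_2\supseteq\cdots,\qquad R_s:=\{\phi\in R:\phi(A_{d,k}(n))\subseteq A_{d,k+s}(n)\ \text{for all }k\ge 0\}.$$
Then $R_sR_t\subseteq R_{s+t}$, each $R_s$ is a two-sided ideal, and because $A_d(n)$ carries a finite filtration the ideal $R_1$ is nilpotent, so for $s\ge 1$ every $\id+\phi$ with $\phi\in R_s$ is invertible in $R$ with $(\id+\phi)^{-1}-\id\in R_s$; in particular $\id+R_s$ is a subgroup of $R^{\times}$. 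In this language Lemma~\ref{l611} says precisely that $T_g-\id\in R_1$ for all $g\in\IA(n)$, i.e.\ $g\mapsto T_g$ is a group anti-homomorphism $\IA(n)\to\id+R_1$, and the inductive hypothesis says it carries $\Gamma_r(\IA(n))$ into $\id+R_r$.

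Next I would record the elementary estimate $[\id+R_s,\id+R_t]\subseteq\id+R_{s+t}$ for $s,t\ge 1$, where $[\,\cdot\,,\,\cdot\,]$ is the group commutator in $R^{\times}$: for $a\in R_s$, $b\in R_t$ one has $(\id+a)(\id+b)-(\id+b)(\id+a)=ab-ba\in R_{s+t}$, hence
$$(\id+a)(\id+b)(\id+a)^{-1}(\id+b)^{-1}=\id+(ab-ba)(\id+a)^{-1}(\id+b)^{-1}\in\id+R_{s+t},$$
using that $R_{s+t}$ is an ideal. Iterating this (with $t=1$) gives $\Gamma_j(\id+R_1)\subseteq\id+R_j$ for all $j\ge1$. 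Since a group anti-homomorphism carries a lower central series into the lower central series of its image, we get
$$\{\,T_g:g\in\Gamma_{r+1}(\IA(n))\,\}\subseteq\Gamma_{r+1}(\id+R_1)\subseteq\id+R_{r+1},$$
which unwinds to $[A_{d,k}(n),\Gamma_{r+1}(\IA(n))]\subseteq A_{d,k+r+1}(n)$ for all $k$, completing the induction.

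If one prefers to stay closer to the bracket calculus already set up, the same induction can be run directly: the set of $g\in\IA(n)$ with $[A_{d,k}(n),g]\subseteq A_{d,k+r+1}(n)$ for all $k$ is a subgroup of $\IA(n)$ — closure under products and inverses is Lemma~\ref{l612}, the cross terms $[[u,g],h]$ being pushed into $A_{d,k+r+2}(n)$ by Lemma~\ref{l611} — and it contains every generator $xyx^{-1}y^{-1}$ of $\Gamma_{r+1}(\IA(n))=[\Gamma_r(\IA(n)),\IA(n)]$ by the inductive hypothesis together with Lemmas~\ref{l611} and~\ref{l612}; hence it contains all of $\Gamma_{r+1}(\IA(n))$.

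The only genuinely geometric input — that the associated graded $\IA(n)$-action on $\gr(A_d(n))\cong B_d(n)$ is trivial — is already available as Lemma~\ref{l611}; everything else is formal. Accordingly I expect the one delicate point to be the bookkeeping in the commutator estimate $[\id+R_s,\id+R_t]\subseteq\id+R_{s+t}$ (equivalently, verifying that $g\mapsto T_g$ respects lower central series), and, in the direct version, checking carefully that the cross terms land deep enough in the filtration.
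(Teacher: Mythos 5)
Your argument is correct, and your preferred (operator-theoretic) route is a genuinely different packaging from the paper's. The paper runs the induction entirely in the bracket calculus: for $g\in\Gamma_{r-1}(\IA(n))$ and $h\in\IA(n)$ it computes $[u,[g,h]]=([[u,g],h]-[[u,h],g])\cdot g^{-1}h^{-1}$, places this in $A_{d,k+r}(n)$ using the induction hypothesis and Lemma \ref{l611}, and then uses Lemma \ref{l612} to pass from these commutator generators to all of $\Gamma_r(\IA(n))$ --- which is exactly your ``direct version''. Your main route instead transports everything into the ring $R_0$ of filtration-preserving endomorphisms, where Lemma \ref{l611} says that $g\mapsto T_g$ is an anti-homomorphism into the unipotent group $\id+R_1$, and the single estimate $[\id+R_s,\id+R_t]\subseteq\id+R_{s+t}$ yields $\Gamma_j(\id+R_1)\subseteq\id+R_j$ for all $j$ at once; no induction on $r$ is actually needed there, and the observation that anti-homomorphisms respect lower central series (since $f(xyx^{-1}y^{-1})=[f(y)^{-1},f(x)^{-1}]$) finishes the proof. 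This buys conceptual clarity and reusability --- it isolates the general fact that a filtration-preserving action whose associated graded action is trivial in degree $0$ automatically shifts degree by $r$ on $\Gamma_r$ --- at the cost of setting up the filtered ring; the paper's computation is shorter and self-contained. One small correction: $R_s$ is a two-sided ideal of the subring $R_0$ of filtered endomorphisms, not of all of $\End_{\K}(A_d(n))$ as your identification $R=R_0$ suggests, but since every operator you manipulate ($T_g$, and the inverses $(\id+a)^{-1}$, which lie in $\id+R_s\subseteq R_0$) is filtered, nothing in the argument is affected.
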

  \begin{proof}
   We prove (\ref{eqbr}) by induction on $r$.
   The case $r=1$ is Lemma \ref{l611}.
   Suppose that (\ref{eqbr}) holds for $r-1\geq 1$. By Lemma \ref{l612}, we have
   \begin{gather*}
    \begin{split}
      [u,[g,h]]
      &=u\cdot (ghg^{-1}h^{-1})-u=(u\cdot gh-u\cdot hg)\cdot g^{-1}h^{-1}\\
      &=([u,gh]-[u,hg])g^{-1}h^{-1}=([[u,g],h]-[[u,h],g])\cdot g^{-1}h^{-1}
    \end{split}
   \end{gather*}
   for any $g\in \Gamma_{r-1}(\IA(n))$ and $h\in\IA(n)$.
   From the induction hypothesis and Lemma \ref{l611}, we have $[u,[g,h]]\in A_{d,k+r}(n)$.
   Therefore, by Lemma \ref{l612}, we have $[u,g]\in A_{d,k+r}(n)$ for any  $g\in\Gamma_r(\IA(n))$ and $u\in A_{d,k}(n)$.
  \end{proof}

  Since $A_{d,2d-1}(n)=0$ for any $d\geq 1$ and $n\geq 0$, we have the following corollary.
  \begin{corollary}
    The $\Aut(F_n)$-action on $A_d(n)$ induces an $\Aut(F_n)/\Gamma_{2d-1}(\IA(n))$-action on $A_d(n)$ for $d\geq 1$.
  \end{corollary}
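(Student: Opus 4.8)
The plan is to read the statement off Lemma~\ref{l613} together with the vanishing $A_{d,2d-1}(n)=0$ already recorded in Section~\ref{ss32}. First I would apply Lemma~\ref{l613} with $k=0$ and $r=2d-1$: for every $g\in\Gamma_{2d-1}(\IA(n))$ and every $u\in A_d(n)=A_{d,0}(n)$ this gives $[u,g]\in A_{d,2d-1}(n)$. Since $A_{d,2d-1}(n)=0$ for $d\geq 1$ (a Jacobi diagram on $X_n$ with a single univalent vertex vanishes by the STU relations), we obtain $[u,g]=u\cdot g-u=0$, that is $u\cdot g=u$. Thus $\Gamma_{2d-1}(\IA(n))$ acts trivially on $A_d(n)$.

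Next I would check that $\Gamma_{2d-1}(\IA(n))$ is normal in $\Aut(F_n)$, so that the quotient $\Aut(F_n)/\Gamma_{2d-1}(\IA(n))$ is a well-defined group. Indeed $\IA(n)$ is normal in $\Aut(F_n)$, being the kernel of the canonical homomorphism $\Aut(F_n)\to\GL(n;\Z)$, and every term of the lower central series of a group is a characteristic subgroup of that group; since a characteristic subgroup of a normal subgroup is normal, we get $\Gamma_{2d-1}(\IA(n))\lhd\Aut(F_n)$.

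Finally, the right $\Aut(F_n)$-action on $A_d(n)$ is a homomorphism from $\Aut(F_n)^{\op}$ to the group of filtration-preserving linear automorphisms of $A_d(n)$, and by the first step its kernel contains $\Gamma_{2d-1}(\IA(n))$; since that subgroup is normal, the action factors through $\Aut(F_n)/\Gamma_{2d-1}(\IA(n))$, which is precisely the asserted induced action. There is no genuine obstacle here: all the content sits in Lemma~\ref{l613} and the equality $A_{d,2d-1}(n)=0$, and the only point deserving a sentence is the normality of $\Gamma_{2d-1}(\IA(n))$ in $\Aut(F_n)$.
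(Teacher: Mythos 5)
Your proposal is correct and follows exactly the paper's route: the corollary is read off from Lemma~\ref{l613} with $k=0$, $r=2d-1$ together with the vanishing $A_{d,2d-1}(n)=0$. The extra sentence verifying that $\Gamma_{2d-1}(\IA(n))$ is normal in $\Aut(F_n)$ (as a characteristic subgroup of the normal subgroup $\IA(n)$) is a correct and harmless addition that the paper leaves implicit.
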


  We have a canonical isomorphism $\GL(n;\Z)\cong \Aut(F_n)/\IA(n)$ by the definition of $\IA(n)$.
  For any $r\geq 1$, the abelian group $\opegr^r(\IA(n))$ is a right $\GL(n;\Z)$-module by the action induced from the adjoint action of $\Aut(F_n)$ on $\IA(n)$.

  \begin{proposition}\label{p612}
   Let $r\geq 1$.
   The bracket map (\ref{braA}) induces a $\GL(n;\Z)$-module homomorphism
   \begin{equation}\label{braBn}
     [\cdot,\cdot]: B_{d,k}(n)\otimes_{\Z} \opegr^r(\IA(n)) \rightarrow B_{d,k+r}(n).
   \end{equation}
  \end{proposition}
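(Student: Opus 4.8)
The plan is to deduce everything from Lemma~\ref{l613}, used with several choices of indices, together with Lemmas~\ref{l611} and~\ref{l612} and the PBW identification $\opegr^k(A_{d,\ast}(n))\cong B_{d,k}(n)$ of Lemma~\ref{PBW'}. First I would fix $g\in\Gamma_r(\IA(n))$ and note that the bracket map $u\mapsto[u,g]=u\cdot g-u$ of \eqref{braA}, where $u\cdot g:=A_d(g)(u)$, is $\K$-linear in $u$. By Lemma~\ref{l613} it restricts to a linear map $A_{d,k}(n)\to A_{d,k+r}(n)$, and applying Lemma~\ref{l613} with $k$ replaced by $k+1$ shows that it sends $A_{d,k+1}(n)$ into $A_{d,k+r+1}(n)$. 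Composing with the projection $A_{d,k+r}(n)\twoheadrightarrow\opegr^{k+r}(A_{d,\ast}(n))$ and passing to the quotient of the source by $A_{d,k+1}(n)$, we obtain, through the PBW maps of Lemma~\ref{PBW'}, a $\K$-linear map $B_{d,k}(n)\to B_{d,k+r}(n)$, $\bar u\mapsto[\bar u,g]$, for each $g\in\Gamma_r(\IA(n))$.

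Next I would check that for fixed $\bar u\in B_{d,k}(n)$ the assignment $g\mapsto[\bar u,g]$ is a homomorphism from $\Gamma_r(\IA(n))$ to the abelian group $B_{d,k+r}(n)$ that kills $\Gamma_{r+1}(\IA(n))$. Additivity follows from Lemma~\ref{l612}(1): for $g,h\in\Gamma_r(\IA(n))\subset\IA(n)$ and $u\in A_{d,k}(n)$ we have $[u,gh]=[u,g]+[u,h]+[[u,g],h]$, and since $[u,g]\in A_{d,k+r}(n)$ by Lemma~\ref{l613}, a further application of Lemma~\ref{l613} (with $k+r$ in place of $k$ and $r=1$) gives $[[u,g],h]\in A_{d,k+r+1}(n)$, so $[u,gh]\equiv[u,g]+[u,h]$ in $B_{d,k+r}(n)$. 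That $\Gamma_{r+1}(\IA(n))$ acts by $0$ is Lemma~\ref{l613} with $r$ replaced by $r+1$. Hence the map factors through $\opegr^r(\IA(n))=\Gamma_r(\IA(n))/\Gamma_{r+1}(\IA(n))$, and combined with the $\K$-linearity in $\bar u$ this produces a well-defined $\Z$-bilinear map $B_{d,k}(n)\times\opegr^r(\IA(n))\to B_{d,k+r}(n)$, equivalently a linear map $B_{d,k}(n)\otimes_{\Z}\opegr^r(\IA(n))\to B_{d,k+r}(n)$.

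Finally I would verify $\GL(n;\Z)$-equivariance. Let $g\in\Aut(F_n)$ have image $\bar g\in\GL(n;\Z)$, let $h\in\Gamma_r(\IA(n))$ represent $\xi\in\opegr^r(\IA(n))$, and let $u\in A_{d,k}(n)$. Using that $u\mapsto u\cdot g$ is a right action and that $\IA(n)$ is normal in $\Aut(F_n)$, a direct computation already at the level of $A_d(n)$ gives
$$[u\cdot g,\,g^{-1}hg]=(u\cdot g)\cdot(g^{-1}hg)-u\cdot g=u\cdot(hg)-u\cdot g=(u\cdot h-u)\cdot g=[u,h]\cdot g.$$
Passing to the associated graded, Proposition~\ref{prop341} identifies the induced $\Aut(F_n)$-action on $\gr(A_d(n))$ with the $\GL(n;\Z)$-action on $B_d(n)$, so the class of $u\cdot g$ is $\bar u\cdot\bar g$; and the class of $g^{-1}hg$ in $\opegr^r(\IA(n))$ is $\xi\cdot\bar g$ by definition of the (right) adjoint action. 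Thus the displayed identity reads $[\bar u\cdot\bar g,\,\xi\cdot\bar g]=[\bar u,\xi]\cdot\bar g$ in $B_{d,k+r}(n)$, which is exactly $\GL(n;\Z)$-equivariance of the bilinear map just constructed with respect to the diagonal action on $B_{d,k}(n)\otimes_{\Z}\opegr^r(\IA(n))$; this yields the asserted $\GL(n;\Z)$-module homomorphism.

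The only genuinely delicate point, and where I expect to spend the most care, is the last step: one must fix the left/right conventions for the $\Aut(F_n)$-action on $A_d(n)$ (coming from the covariant functor $A_d$ on $\F^{\op}$) and for the adjoint action on $\IA(n)$ consistently, so that the displayed chain of equalities comes out with no stray inverses. Everything else is bookkeeping with the filtration indices in Lemma~\ref{l613}.
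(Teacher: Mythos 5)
Your proposal is correct and takes essentially the same route as the paper: well-definedness and $\Z$-bilinearity are deduced from Lemmas \ref{l612} and \ref{l613} (together with the PBW identification), and equivariance comes from the identity $[u,h]\cdot g=[u\cdot g,\,g^{-1}hg]$, which is exactly the computation in the paper's proof. You simply spell out the filtration bookkeeping that the paper leaves implicit.
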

  \begin{proof}
   By Lemmas \ref{l612} and \ref{l613}, we have a $\Z$-bilinear map
   $$[\cdot,\cdot]:B_{d,k}(n)\times \opegr^r(\IA(n))\rightarrow B_{d,k+r}(n)$$
   and thus, we have a $\K$-linear map (\ref{braBn}).

   Since we have $[u, h]\cdot g= [u\cdot g, g^{-1}hg]$ for $g\in\Aut(F_n),h\in\Gamma_r(\IA(n)),u\in A_d(n)$, it follows that the following diagram commutes:
   \begin{gather*}
    \xymatrix{
      B_{d,k}(n)\otimes_{\Z}\opegr^r(\IA(n))\ar[d]_{\cdot A}\ar[r]^-{[\cdot,\cdot]}
      &
      B_{d,k+r}(n)\ar[d]^{\cdot A}
      \\
      B_{d,k}(n)\otimes_{\Z}\opegr^r(\IA(n))\ar[r]_-{[\cdot,\cdot]}
      &
      B_{d,k+r}(n),
    }
   \end{gather*}
   for any $A\in \GL(n;\Z)$.
   Therefore, (\ref{braBn}) is a right $\GL(n;\Z)$-module map.
  \end{proof}

  We define a group homomorphism
  \begin{equation}\label{beta}
    \beta_{d,k}^r: \opegr^r(\IA(n)) \rightarrow \Hom(B_{d,k}(n), B_{d,k+r}(n))
  \end{equation}
  by $\beta_{d,k}^r(g)(u)=[u,g]$ for $g\in\opegr^r(\IA(n))$ and $u\in B_{d,k}(n)$.

 
 \subsection{Action of $\gr(\IA(n))$ on $B_d(n)$}\label{ss53}
 Here we state the $\Aut(F_n)$-action on $A_d(n)$ in terms of extended N-series and the two induced actions on $B_d(n)$ of $\GL(n;\Z)$ and $\gr(\IA(n))$, the latter of which means the $\GL(n;\Z)$-module homomorphisms \eqref{braBn}, in terms of extended graded Lie algebras. See Appendix \ref{ss52} for extended N-series and extended graded Lie algebras.
 
  Set $$\Aut_r(F_n):=\begin{cases}
        \Aut(F_n) &(r=0)\\
        \Gamma_r(\IA(n)) &(r\geq 1).
  \end{cases}$$
  The descending series $\Aut_{\ast}(F_n)=(\Aut_r(F_n))_{r\geq 0}$ is an extended N-series, so the descending series $\Aut_{\ast}(F_n)^{\op}:=(\Aut_r(F_n)^{\op})_{r\geq 0}$ of opposite groups of $\Aut_r(F_n)$ is also an extended N-series.
  Let $\gr(\Aut(F_n)^{\op})$ denote the image of $\Aut_{\ast}(F_n)^{\op}$ under the functor $\gr_{\bullet}$.
  The functor $A_d$ induces an action of $\Aut_{\ast}(F_n)^{\op}$ on the filtered vector space $A_{d,\ast}(n)$:
  $$A_d:\Aut_{\ast}(F_n)^{\op}\rightarrow \Aut_{\ast}(A_{d,\ast}(n)).$$

  \begin{theorem}\label{th531}
    The $\Aut_{\ast}(F_n)^{\op}$-action on the filtered vector space $A_{d,\ast}(n)$ induces an action of the extended graded Lie algebra $\gr(\Aut(F_n)^{\op})$ on the graded vector space $B_{d}(n)$. This action is determined by the functor $B_d$ and the group homomorphisms $\beta_{d,k}^r$ in (\ref{beta}).
    
    In particular, we have a right action of the graded Lie algebra $\gr(\IA(n))$ on the graded vector space $B_d(n)$, which consists of $\GL(n;\Z)$-module homomorphisms
    $$[\cdot,\cdot]:B_{d,k}(n)\otimes_{\Z} \opegr^r(\IA(n)) \rightarrow B_{d,k+r}(n).$$
  \end{theorem}
  \begin{proof}
    By Proposition \ref{p621}, the action of $\Aut_{\ast}(F_n)^{\op}$ on $A_{d,\ast}(n)$ induces an action of the extended graded Lie algebra $\gr(\Aut(F_n)^{\op})$ on the graded vector space $\gr(A_{d,\ast}(n))\cong B_{d}(n)$.
    This action is a pair of a group homomorphism
    $$B_d:\GL(n;\Z)^{\op}\rightarrow\Aut_{\gVect}(B_d(n))$$
    and a graded Lie algebra homomorphism
    $$\bigoplus_{r\geq 1}
    (\opegr^r(\Aut_{\ast}(F_n)^{\op}))
    \rightarrow\bigoplus_{r\geq 1}\End_r(B_d(n)),$$
    which can be regarded as the group homomorphisms $\beta_{d,k}^r$
    by considering the action of $\opegr^r(\Aut_{\ast}(F_n)^{\op})$ on $B_d(n)$ as a right action of $\opegr^r(\IA(n))$ on $B_d(n)$.
  \end{proof}


\section{The $\GL(V_n)$-module $B_d(n)$}\label{sBd}
 In this section, we recall elementary facts about the representation theory to establish the notation and consider the dimension of the $\K$-vector space $B_d(n)$, which is equal to that of $A_d(n)$.
 
  Let $N$ be a nonnegative integer and $\lambda$ a partition of $N$.
  Let $c_{\lambda}\in \K\gpS_N$ denote the Young symmetrizer.
  Let $S^{\lambda}=\K\gpS_N\cdot c_{\lambda}$ denote the Specht module corresponding to $\lambda$, which is a simple $\gpS_N$-module.
  Let $\repS_{\lambda}V_n=V_n^{\otimes N}\cdot c_{\lambda}$ denote the image of $V_n$ under the Schur functor $\repS_{\lambda}$ corresponding to $\lambda$, which is a $\GL(V_n)$-module.
  Let $r(\lambda)$ be the number of rows of $\lambda$.
  If $r(\lambda)\leq n$, then the $\GL(V_n)$-module $\repS_{\lambda}V_n\neq 0$ is simple.
  If $r(\lambda)>n$, we have $\repS_{\lambda}V_n=0$.
  It is well known that any $\gpS_{N}$-module can be decomposed into a direct sum of the Specht modules and that any polynomial representation of $\GL(V_n)$ can be decomposed into a direct sum of the images of $V_n$ under the Schur functors corresponding to partitions.
  See \cite{F,FH} for some basic facts about the representation theory of $\GL(n;\Z)$ and $\GL(V_n)$.

 \begin{proposition}
  The dimension of the $\K$-vector space $B_{d,k}(n)$ is a polynomial of degree $2d-k$ on $n$.
  Therefore, the dimension of the $\K$-vector space $A_d(n)\cong B_d(n)$ is a polynomial of degree $2d$ on $n$. 
 \end{proposition}
\begin{proof}
 Since $B_{d,k}(n)$ is a polynomial representation of $\GL(V_n)$ corresponding to the $\gpS_{2d-k}$-module $D_{d,k}$, the $\GL(V_n)$-module $B_{d,k}(n)$ can be decomposed into a direct sum of $\repS_{\lambda}V_n$ for $\lambda\vdash 2d-k$.
 The dimension of the $\K$-vector space $\repS_{\lambda}V_n$ is a polynomial of degree $2d-k$ on $n$ and so is the dimension of the $\K$-vector space $B_{d,k}(n)$.
\end{proof}
 
\section{$\Aut(F_n)$-module structure of $A_2(n)$ and indecomposable decomposition of the functor $A_2$}\label{s6}
 In this section, we study the $\Aut(F_n)$-module structure of $A_2(n)$ and give an indecomposable decomposition of the functor $A_2$.

 \subsection{Irreducible decomposition of the $\GL(V_n)$-module $B_2(n)$}
  Here we give an irreducible decomposition of the $\GL(V_n)$-module $B_2(n)$.

  Let $B_{2,k}^c(n)\subset B_{2,k}(n)$ be the \emph{connected part} of $B_{2,k}(n)$, which is spanned by connected $V_n$-colored open Jacobi diagrams, and $D_{2,k}^c \subset D_{2,k}$ the \emph{connected part} of $D_{2,k}$, which is spanned by connected special $[4-k]$-colored open Jacobi diagrams. The subspace $D_{2,k}^c$ is an $\gpS_{4-k}$-submodule of $D_{2,k}$.
  We have an isomorphism of $\GL(V_n)$-modules
  \begin{equation}\label{eqBDc}
    B_{2,k}^c(n)\cong V_n^{\otimes 4-k}\otimes_{\K\gpS_{4-k}} D_{2,k}^c,
  \end{equation}
  which is the connected version of (\ref{eqBD}).

  \begin{proposition}[Bar-Natan \cite{BN}]\label{pbn}
   We have isomorphisms of $\gpS_{4-k}$-modules
   $$D^c_{2,1}\cong S^{(1,1,1)},\quad D^c_{2,2}\cong S^{(2)}.$$
  \end{proposition}
  \begin{proposition}\label{prop531}
   We have $B_2(n)=B_{2,0}(n)\oplus B_{2,1}(n)\oplus B_{2,2}(n),$ where
     \begin{gather*}
       \begin{split}
         B_{2,0}(n)&\cong\repS_{(4)}V_n\oplus\repS_{(2,2)}V_n,\\
         B_{2,1}(n)&=B^c_{2,1}(n)\cong\repS_{(1,1,1)}V_n,\\
         B_{2,2}(n)&=B^c_{2,2}(n)\cong\repS_{(2)}V_n
       \end{split}
     \end{gather*}
     as $\GL(V_n)$-modules.
  \end{proposition}
  \begin{proof}
    The cases where $k=1,2$ follow from the isomorphism (\ref{eqBDc}) and Proposition \ref{pbn}.
    For $k=0$, we have an isomorphism of $\GL(V_n)$-modules
    $$
      \Phi:B_{2,0}(n)\xrightarrow{\cong} \Sym^2(B^c_{1,0}(n))=\repS_{(2)}(B^c_{1,0}(n))
    $$
    defined by $\Phi(\centre{
\begingroup%
  \makeatletter%
  \providecommand\color[2][]{%
    \errmessage{(Inkscape) Color is used for the text in Inkscape, but the package 'color.sty' is not loaded}%
    \renewcommand\color[2][]{}%
  }%
  \providecommand\transparent[1]{%
    \errmessage{(Inkscape) Transparency is used (non-zero) for the text in Inkscape, but the package 'transparent.sty' is not loaded}%
    \renewcommand\transparent[1]{}%
  }%
  \providecommand\rotatebox[2]{#2}%
  \newcommand*\fsize{\dimexpr\f@size pt\relax}%
  \newcommand*\lineheight[1]{\fontsize{\fsize}{#1\fsize}\selectfont}%
  \ifx\svgwidth\undefined%
    \setlength{\unitlength}{79.66166656bp}%
    \ifx\svgscale\undefined%
      \relax%
    \else%
      \setlength{\unitlength}{\unitlength * \real{\svgscale}}%
    \fi%
  \else%
    \setlength{\unitlength}{\svgwidth}%
  \fi%
  \global\let\svgwidth\undefined%
  \global\let\svgscale\undefined%
  \makeatother%
  \begin{picture}(1,0.0257271)%
    \lineheight{1}%
    \setlength\tabcolsep{0pt}%
    \put(0,0){\includegraphics[width=\unitlength,page=1]{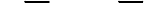}}%
    \put(-0.00190013,0.00594414){\makebox(0,0)[lt]{\lineheight{1.45000005}\smash{\begin{tabular}[t]{l}$w_1$\end{tabular}}}}%
    \put(0.35465186,0.00442546){\makebox(0,0)[lt]{\lineheight{1.45000005}\smash{\begin{tabular}[t]{l}$w_2$\end{tabular}}}}%
    \put(0.54873588,0.0054381){\makebox(0,0)[lt]{\lineheight{1.45000005}\smash{\begin{tabular}[t]{l}$w_3$\end{tabular}}}}%
    \put(0.91727706,0.0064071){\makebox(0,0)[lt]{\lineheight{1.45000005}\smash{\begin{tabular}[t]{l}$w_4$\end{tabular}}}}%
  \end{picture}%
\endgroup%
}\:)=\centre{
\begingroup%
  \makeatletter%
  \providecommand\color[2][]{%
    \errmessage{(Inkscape) Color is used for the text in Inkscape, but the package 'color.sty' is not loaded}%
    \renewcommand\color[2][]{}%
  }%
  \providecommand\transparent[1]{%
    \errmessage{(Inkscape) Transparency is used (non-zero) for the text in Inkscape, but the package 'transparent.sty' is not loaded}%
    \renewcommand\transparent[1]{}%
  }%
  \providecommand\rotatebox[2]{#2}%
  \newcommand*\fsize{\dimexpr\f@size pt\relax}%
  \newcommand*\lineheight[1]{\fontsize{\fsize}{#1\fsize}\selectfont}%
  \ifx\svgwidth\undefined%
    \setlength{\unitlength}{33.51865928bp}%
    \ifx\svgscale\undefined%
      \relax%
    \else%
      \setlength{\unitlength}{\unitlength * \real{\svgscale}}%
    \fi%
  \else%
    \setlength{\unitlength}{\svgwidth}%
  \fi%
  \global\let\svgwidth\undefined%
  \global\let\svgscale\undefined%
  \makeatother%
  \begin{picture}(1,0.05719829)%
    \lineheight{1}%
    \setlength\tabcolsep{0pt}%
    \put(0,0){\includegraphics[width=\unitlength,page=1]{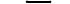}}%
    \put(-0.00451591,0.01128167){\makebox(0,0)[lt]{\lineheight{1.45000005}\smash{\begin{tabular}[t]{l}$w_1$\end{tabular}}}}%
    \put(0.80339764,0.0105177){\makebox(0,0)[lt]{\lineheight{1.45000005}\smash{\begin{tabular}[t]{l}$w_2$\end{tabular}}}}%
  \end{picture}%
\endgroup%
}\;\cdot \centre{
\begingroup%
  \makeatletter%
  \providecommand\color[2][]{%
    \errmessage{(Inkscape) Color is used for the text in Inkscape, but the package 'color.sty' is not loaded}%
    \renewcommand\color[2][]{}%
  }%
  \providecommand\transparent[1]{%
    \errmessage{(Inkscape) Transparency is used (non-zero) for the text in Inkscape, but the package 'transparent.sty' is not loaded}%
    \renewcommand\transparent[1]{}%
  }%
  \providecommand\rotatebox[2]{#2}%
  \newcommand*\fsize{\dimexpr\f@size pt\relax}%
  \newcommand*\lineheight[1]{\fontsize{\fsize}{#1\fsize}\selectfont}%
  \ifx\svgwidth\undefined%
    \setlength{\unitlength}{33.51865928bp}%
    \ifx\svgscale\undefined%
      \relax%
    \else%
      \setlength{\unitlength}{\unitlength * \real{\svgscale}}%
    \fi%
  \else%
    \setlength{\unitlength}{\svgwidth}%
  \fi%
  \global\let\svgwidth\undefined%
  \global\let\svgscale\undefined%
  \makeatother%
  \begin{picture}(1,0.05719829)%
    \lineheight{1}%
    \setlength\tabcolsep{0pt}%
    \put(0,0){\includegraphics[width=\unitlength,page=1]{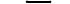}}%
    \put(-0.00451591,0.01128167){\makebox(0,0)[lt]{\lineheight{1.45000005}\smash{\begin{tabular}[t]{l}$w_3$\end{tabular}}}}%
    \put(0.80339764,0.0105177){\makebox(0,0)[lt]{\lineheight{1.45000005}\smash{\begin{tabular}[t]{l}$w_4$\end{tabular}}}}%
  \end{picture}%
\endgroup%
}$\; for $w_1,\cdots,w_4\in V_n$.
    By Proposition \ref{B1} and plethysm, it follows that
    $$
      B_{2,0}(n)\cong\Sym^2(B^c_{1,0}(n))\cong\repS_{(2)}(\repS_{(2)}V_n)\cong\repS_{(4)}V_n\oplus\repS_{(2,2)}V_n.
    $$
  \end{proof}

  Let $B_{2,0}'(n)$ (resp. $B_{2,0}''(n)$) denote the subspace of $B_{2,0}(n)$ that is isomorphic to $\repS_{(4)}V_n$ (resp. $\repS_{(2,2)}V_n$).
  By Proposition \ref{prop531}, we have an irreducible decomposition of the $\GL(V_n)$-module $B_2(n)$
  \begin{gather}\label{eqdecompB2}
      B_2(n)=B_{2,0}'(n)\oplus B_{2,0}''(n)\oplus B_{2,1}(n)\oplus B_{2,2}(n).
  \end{gather}
  Here $B_{2,0}''(n)$ vanishes only when $n=0,1$, $B_{2,1}(n)$ vanishes only when $n=0,1,2$, and $B_{2,0}'(n)$ and $B_{2,2}(n)$ vanish only when $n=0$.
  By (\ref{GLaction}), the $\GL(n;\Z)$-action on $B_d(n)$ factors through the $\GL(V_n)$-action on $B_d(n)$:
  $$\GL(n;\Z)\hookrightarrow \GL(n;\K)\xrightarrow{{}^t\!(\cdot)^{-1}}\GL(V_n)\rightarrow \Aut_{\gVect}(B_d(n)).$$
  Therefore, the irreducible decomposition (\ref{eqdecompB2}) of $B_2(n)$ holds as the $\GL(n;\Z)$-modules.

  \begin{remark}
    For $n=2$, $\repS_{(2,2)}V_n\cong \det^{2}=\K$ is the simple $\GL(2;\Z)$-module given by the square of the determinant, so it is trivial.
    For $n=3$, $\repS_{(1,1,1)}V_n\cong \det$ is the simple $\GL(3;\Z)$-module given by the determinant.
  \end{remark}

 \subsection{Direct decomposition of $A_2$}\label{ss62}
  Here we give a direct decomposition of the functor $A_2$.

  The category $\A$ has morphisms
  $$
  \mu=\centre{
\begingroup%
  \makeatletter%
  \providecommand\color[2][]{%
    \errmessage{(Inkscape) Color is used for the text in Inkscape, but the package 'color.sty' is not loaded}%
    \renewcommand\color[2][]{}%
  }%
  \providecommand\transparent[1]{%
    \errmessage{(Inkscape) Transparency is used (non-zero) for the text in Inkscape, but the package 'transparent.sty' is not loaded}%
    \renewcommand\transparent[1]{}%
  }%
  \providecommand\rotatebox[2]{#2}%
  \newcommand*\fsize{\dimexpr\f@size pt\relax}%
  \newcommand*\lineheight[1]{\fontsize{\fsize}{#1\fsize}\selectfont}%
  \ifx\svgwidth\undefined%
    \setlength{\unitlength}{30.7499995bp}%
    \ifx\svgscale\undefined%
      \relax%
    \else%
      \setlength{\unitlength}{\unitlength * \real{\svgscale}}%
    \fi%
  \else%
    \setlength{\unitlength}{\svgwidth}%
  \fi%
  \global\let\svgwidth\undefined%
  \global\let\svgscale\undefined%
  \makeatother%
  \begin{picture}(1,1.30247283)%
    \lineheight{1}%
    \setlength\tabcolsep{0pt}%
    \put(0,0){\includegraphics[width=\unitlength,page=1]{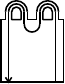}}%
  \end{picture}%
\endgroup%
}:2\rightarrow 1,\quad
  \eta=\centre{
\begingroup%
  \makeatletter%
  \providecommand\color[2][]{%
    \errmessage{(Inkscape) Color is used for the text in Inkscape, but the package 'color.sty' is not loaded}%
    \renewcommand\color[2][]{}%
  }%
  \providecommand\transparent[1]{%
    \errmessage{(Inkscape) Transparency is used (non-zero) for the text in Inkscape, but the package 'transparent.sty' is not loaded}%
    \renewcommand\transparent[1]{}%
  }%
  \providecommand\rotatebox[2]{#2}%
  \newcommand*\fsize{\dimexpr\f@size pt\relax}%
  \newcommand*\lineheight[1]{\fontsize{\fsize}{#1\fsize}\selectfont}%
  \ifx\svgwidth\undefined%
    \setlength{\unitlength}{30.7499995bp}%
    \ifx\svgscale\undefined%
      \relax%
    \else%
      \setlength{\unitlength}{\unitlength * \real{\svgscale}}%
    \fi%
  \else%
    \setlength{\unitlength}{\svgwidth}%
  \fi%
  \global\let\svgwidth\undefined%
  \global\let\svgscale\undefined%
  \makeatother%
  \begin{picture}(1,1.01165838)%
    \lineheight{1}%
    \setlength\tabcolsep{0pt}%
    \put(0,0){\includegraphics[width=\unitlength,page=1]{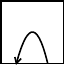}}%
  \end{picture}%
\endgroup%
}:0\rightarrow 1,\quad
   c=\centre{
\begingroup%
  \makeatletter%
  \providecommand\color[2][]{%
    \errmessage{(Inkscape) Color is used for the text in Inkscape, but the package 'color.sty' is not loaded}%
    \renewcommand\color[2][]{}%
  }%
  \providecommand\transparent[1]{%
    \errmessage{(Inkscape) Transparency is used (non-zero) for the text in Inkscape, but the package 'transparent.sty' is not loaded}%
    \renewcommand\transparent[1]{}%
  }%
  \providecommand\rotatebox[2]{#2}%
  \newcommand*\fsize{\dimexpr\f@size pt\relax}%
  \newcommand*\lineheight[1]{\fontsize{\fsize}{#1\fsize}\selectfont}%
  \ifx\svgwidth\undefined%
    \setlength{\unitlength}{30.75118016bp}%
    \ifx\svgscale\undefined%
      \relax%
    \else%
      \setlength{\unitlength}{\unitlength * \real{\svgscale}}%
    \fi%
  \else%
    \setlength{\unitlength}{\svgwidth}%
  \fi%
  \global\let\svgwidth\undefined%
  \global\let\svgscale\undefined%
  \makeatother%
  \begin{picture}(1,1.01163876)%
    \lineheight{1}%
    \setlength\tabcolsep{0pt}%
    \put(0,0){\includegraphics[width=\unitlength,page=1]{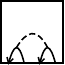}}%
  \end{picture}%
\endgroup%
}:0\rightarrow 2.
  $$
  Depict $c$ as
  $\centre{}\:.$
  The iterated multiplications $\mu^{[q]}: q\rightarrow 1$ for $q\geq 0$ are inductively defined by
  $$\mu^{[0]}=\eta,\quad\mu^{[1]}=\id_1, \quad \mu^{[q+1]}=\mu\circ(\mu^{[q]}\otimes\id_1)\quad (q\geq 1).$$
  For $m\geq 0$, there is a group homomorphism
  $$
    \gpS_{m}\rightarrow \A(m,m),\quad \sigma \mapsto P_{\sigma},
  $$
  where $P_{\sigma}$ is the symmetry in $\A$ corresponding to $\sigma$.
  Set
  $$\centre{
\begingroup%
  \makeatletter%
  \providecommand\color[2][]{%
    \errmessage{(Inkscape) Color is used for the text in Inkscape, but the package 'color.sty' is not loaded}%
    \renewcommand\color[2][]{}%
  }%
  \providecommand\transparent[1]{%
    \errmessage{(Inkscape) Transparency is used (non-zero) for the text in Inkscape, but the package 'transparent.sty' is not loaded}%
    \renewcommand\transparent[1]{}%
  }%
  \providecommand\rotatebox[2]{#2}%
  \newcommand*\fsize{\dimexpr\f@size pt\relax}%
  \newcommand*\lineheight[1]{\fontsize{\fsize}{#1\fsize}\selectfont}%
  \ifx\svgwidth\undefined%
    \setlength{\unitlength}{30.75117737bp}%
    \ifx\svgscale\undefined%
      \relax%
    \else%
      \setlength{\unitlength}{\unitlength * \real{\svgscale}}%
    \fi%
  \else%
    \setlength{\unitlength}{\svgwidth}%
  \fi%
  \global\let\svgwidth\undefined%
  \global\let\svgscale\undefined%
  \makeatother%
  \begin{picture}(1,1.09751899)%
    \lineheight{1}%
    \setlength\tabcolsep{0pt}%
    \put(0,0){\includegraphics[width=\unitlength,page=1]{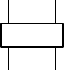}}%
    \put(0.07929683,0.50062537){\makebox(0,0)[lt]{\lineheight{1.45000005}\smash{\begin{tabular}[t]{l}$sym_m$\end{tabular}}}}%
    \put(0,0){\includegraphics[width=\unitlength,page=2]{symn.pdf}}%
  \end{picture}%
\endgroup%
}:= \sum_{\sigma\in\gpS_m}P_{\sigma},\quad
  \centre{
\begingroup%
  \makeatletter%
  \providecommand\color[2][]{%
    \errmessage{(Inkscape) Color is used for the text in Inkscape, but the package 'color.sty' is not loaded}%
    \renewcommand\color[2][]{}%
  }%
  \providecommand\transparent[1]{%
    \errmessage{(Inkscape) Transparency is used (non-zero) for the text in Inkscape, but the package 'transparent.sty' is not loaded}%
    \renewcommand\transparent[1]{}%
  }%
  \providecommand\rotatebox[2]{#2}%
  \newcommand*\fsize{\dimexpr\f@size pt\relax}%
  \newcommand*\lineheight[1]{\fontsize{\fsize}{#1\fsize}\selectfont}%
  \ifx\svgwidth\undefined%
    \setlength{\unitlength}{30.75117737bp}%
    \ifx\svgscale\undefined%
      \relax%
    \else%
      \setlength{\unitlength}{\unitlength * \real{\svgscale}}%
    \fi%
  \else%
    \setlength{\unitlength}{\svgwidth}%
  \fi%
  \global\let\svgwidth\undefined%
  \global\let\svgscale\undefined%
  \makeatother%
  \begin{picture}(1,1.09751899)%
    \lineheight{1}%
    \setlength\tabcolsep{0pt}%
    \put(0,0){\includegraphics[width=\unitlength,page=1]{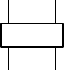}}%
    \put(0.22078711,0.48330002){\makebox(0,0)[lt]{\lineheight{1.45000005}\smash{\begin{tabular}[t]{l}$alt_m$\end{tabular}}}}%
    \put(0,0){\includegraphics[width=\unitlength,page=2]{altn.pdf}}%
  \end{picture}%
\endgroup%
}:= \sum_{\sigma\in\gpS_m}\sgn(\sigma)P_{\sigma}\in\A(m,m).$$

  By Habiro--Massuyeau \cite[Lemma 5.16]{HM_k}, every element of $A_2(n)$ is a linear combination of morphisms of the form
  $$
    (\mu^{[q_1]}\otimes\cdots\otimes\mu^{[q_n]})\circ P_{\sigma}\circ c^{\otimes 2}
  $$
  for $\sigma\in\gpS_4$ and $q_1,\cdots, q_n\geq 0$ such that $q_1+\cdots+q_n=4$. 
  For example, we have 
  $$\centre{}=(\mu^{[1]}\otimes\mu^{[2]}\otimes\mu^{[1]}\otimes(\mu^{[0]})^{\otimes n-3})\circ (P_{(2,3)}-P_{\id})\circ c^{\otimes 2}.$$
  
  Since we have $(\mu^{[q_1]}\otimes\cdots\otimes\mu^{[q_n]})\circ P_{\sigma}\in \A_0(4,n)$ and $\A_0(m,n)\cong \K\F^{\op}(m,n)$, by the above decomposition of morphisms of $A_2(n)$, we have the following lemma.
  \begin{lemma}\label{l1021}
   For $n\geq 0$, we have
   $$
     A_2(n)=\Span_\K\{A_2(f)(c\otimes c):f\in\F^\op(4,n)\}.
   $$
  \end{lemma}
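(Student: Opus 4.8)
The plan is to obtain the lemma as a direct repackaging of the Habiro--Massuyeau normal form recalled just above \cite[Lemma 5.16]{HM_k}. One inclusion is immediate: for any $f\in\F^\op(4,n)$ the map $A_2(f)$ carries $A_2(4)$ into $A_2(n)$, and $c\otimes c=c^{\otimes 2}\in A_2(4)$, so $\Span_\K\{A_2(f)(c\otimes c):f\in\F^\op(4,n)\}\subseteq A_2(n)$; the content is the reverse inclusion.

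For the reverse inclusion I would fix $\sigma\in\gpS_4$ and $\mathbf{q}=(q_1,\dots,q_n)$ with $q_i\geq 0$ and $q_1+\cdots+q_n=4$, and set $h_{\sigma,\mathbf{q}}:=(\mu^{[q_1]}\otimes\cdots\otimes\mu^{[q_n]})\circ P_\sigma\colon 4\to n$, so that the corresponding generator of $A_2(n)$ is $h_{\sigma,\mathbf{q}}\circ c^{\otimes 2}$. Since $h_{\sigma,\mathbf{q}}$ is assembled only from the structural morphisms $\mu$, $\eta$ (the latter occurring as $\mu^{[0]}$) and $P_\sigma$, its underlying Jacobi diagram is empty, so $h_{\sigma,\mathbf{q}}\in\A_0(4,n)$. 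The identification $\F^\op(4,n)\hookrightarrow\K\F^\op(4,n)\xrightarrow{\cong}\A_0(4,n)$ of \cite{HM_k} is compatible with composition (this is exactly what makes $A_d$ a functor), and under it each of $\mu$, $\eta$, $P_\sigma$ is the image of a \emph{single} morphism of $\F^\op$ --- namely the homomorphism of free groups it induces on fundamental groups --- so $h_{\sigma,\mathbf{q}}$ is itself the image of a single morphism $f_{\sigma,\mathbf{q}}\in\F^\op(4,n)$. By the definition of $A_2$ in Section \ref{ss32}, $A_2(f_{\sigma,\mathbf{q}})$ is precisely composition with $h_{\sigma,\mathbf{q}}$, hence $A_2(f_{\sigma,\mathbf{q}})(c\otimes c)=h_{\sigma,\mathbf{q}}\circ c^{\otimes 2}$ is exactly the chosen generator. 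Letting $\sigma$ and $\mathbf{q}$ range and invoking \cite[Lemma 5.16]{HM_k} then yields $A_2(n)\subseteq\Span_\K\{A_2(f)(c\otimes c):f\in\F^\op(4,n)\}$.

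The only step that needs even a line of justification --- and it is a very mild obstacle --- is the assertion that $h_{\sigma,\mathbf{q}}$ comes from an honest morphism of $\F^\op$ rather than merely from a $\K$-linear combination of such, since this is what lets us express the generator as $A_2$ of a single $f$. This is transparent once one recalls that the isomorphism $\A_0\cong\K\F^\op$ sends a map $X_n\to U_m$ (with boundary on $l$) to the map $F_n\to F_m$ it induces on fundamental groups: the ``algebra''-type morphisms $\mu^{[q]}$, $\eta$ and the symmetries $P_\sigma$ all correspond to genuine group homomorphisms, and a composite of genuine morphisms is a genuine morphism.
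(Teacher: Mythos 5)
Your proof is correct and follows exactly the route the paper intends: the paper derives this lemma directly from the Habiro--Massuyeau normal form \cite[Lemma 5.16]{HM_k}, stating only that it ``easily follows,'' and your argument supplies precisely the missing details (that each $(\mu^{[q_1]}\otimes\cdots\otimes\mu^{[q_n]})\circ P_\sigma$ lies in $\A_0(4,n)$ and is the image of a single morphism of $\F^\op$, so each generator is of the form $A_2(f)(c\otimes c)$).
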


  Set
  \begin{gather*}
    \begin{split}
    P'&=\centre{}=8\:c\otimes c+8\:\centre{
\begingroup%
  \makeatletter%
  \providecommand\color[2][]{%
    \errmessage{(Inkscape) Color is used for the text in Inkscape, but the package 'color.sty' is not loaded}%
    \renewcommand\color[2][]{}%
  }%
  \providecommand\transparent[1]{%
    \errmessage{(Inkscape) Transparency is used (non-zero) for the text in Inkscape, but the package 'transparent.sty' is not loaded}%
    \renewcommand\transparent[1]{}%
  }%
  \providecommand\rotatebox[2]{#2}%
  \newcommand*\fsize{\dimexpr\f@size pt\relax}%
  \newcommand*\lineheight[1]{\fontsize{\fsize}{#1\fsize}\selectfont}%
  \ifx\svgwidth\undefined%
    \setlength{\unitlength}{19.46367335bp}%
    \ifx\svgscale\undefined%
      \relax%
    \else%
      \setlength{\unitlength}{\unitlength * \real{\svgscale}}%
    \fi%
  \else%
    \setlength{\unitlength}{\svgwidth}%
  \fi%
  \global\let\svgwidth\undefined%
  \global\let\svgscale\undefined%
  \makeatother%
  \begin{picture}(1,0.47293477)%
    \lineheight{1}%
    \setlength\tabcolsep{0pt}%
    \put(0,0){\includegraphics[width=\unitlength,page=1]{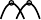}}%
  \end{picture}%
\endgroup%
}+8\:\centre{
\begingroup%
  \makeatletter%
  \providecommand\color[2][]{%
    \errmessage{(Inkscape) Color is used for the text in Inkscape, but the package 'color.sty' is not loaded}%
    \renewcommand\color[2][]{}%
  }%
  \providecommand\transparent[1]{%
    \errmessage{(Inkscape) Transparency is used (non-zero) for the text in Inkscape, but the package 'transparent.sty' is not loaded}%
    \renewcommand\transparent[1]{}%
  }%
  \providecommand\rotatebox[2]{#2}%
  \newcommand*\fsize{\dimexpr\f@size pt\relax}%
  \newcommand*\lineheight[1]{\fontsize{\fsize}{#1\fsize}\selectfont}%
  \ifx\svgwidth\undefined%
    \setlength{\unitlength}{15.67079398bp}%
    \ifx\svgscale\undefined%
      \relax%
    \else%
      \setlength{\unitlength}{\unitlength * \real{\svgscale}}%
    \fi%
  \else%
    \setlength{\unitlength}{\svgwidth}%
  \fi%
  \global\let\svgwidth\undefined%
  \global\let\svgscale\undefined%
  \makeatother%
  \begin{picture}(1,0.59024833)%
    \lineheight{1}%
    \setlength\tabcolsep{0pt}%
    \put(0,0){\includegraphics[width=\unitlength,page=1]{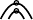}}%
  \end{picture}%
\endgroup%
} \in A_2(4),\\
    P''&=\centre{}=2\:\centre{}-2\:\centre{} \in A_2(4).
    \end{split}
  \end{gather*}
  
  Let
  $$A_2',A_2'':\F^\op\rightarrow\fVect$$
  be the subfunctors of the functor $A_2$ such that
  $$A_2'(n):=\Span_\K\{A_2(f)(P'):f\in\F^\op(4,n)\}\subset A_2(n),$$
  $$A_2''(n):=\Span_\K\{A_2(f)(P''):f\in\F^\op(4,n)\}\subset A_2(n),$$
  respectively.

  \begin{proposition}\label{prop1021}
   We have a direct decomposition
   $$
     A_2=A_2'\oplus A_2''
   $$
   in the functor category $\fVect^{\F^\op}$.
  \end{proposition}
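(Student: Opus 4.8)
The plan is to split the statement into two object-wise claims — (i) the spanning $A_2(n)=A_2'(n)+A_2''(n)$ and (ii) the directness $A_2'(n)\cap A_2''(n)=0$ — and then observe that the decomposition $A_2(f)=A_2'(f)\oplus A_2''(f)$ of morphisms is automatic: by construction $A_2'$ and $A_2''$ are subfunctors of $A_2$ (if $g\in\F^\op(n,m)$ then $A_2(g)\bigl(A_2(f)(P')\bigr)=A_2(g\circ f)(P')\in A_2'(m)$, and likewise for $P''$), so $A_2(f)$ restricts to $A_2'(m)\to A_2'(n)$ and to $A_2''(m)\to A_2''(n)$, and a linear map respecting a direct sum of its source and its target is the sum of its restrictions.

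For (i) I would invoke Lemma \ref{l1021}, which gives $A_2(n)=\Span_\K\{A_2(f)(c\otimes c):f\in\F^\op(4,n)\}$. Since $A_2'$ and $A_2''$ are subfunctors, it suffices to prove $c\otimes c\in A_2'(4)+A_2''(4)$, for then $A_2(f)(c\otimes c)$ lies in $A_2'(n)+A_2''(n)$ for every $f$. Let $\gpS_4$ act on $A_2(4)$ by $\sigma\cdot D:=P_\sigma\circ D$ (the permutation automorphisms of $F_4$ acting through $A_2$), so that $P'=\sym_4\circ(c\otimes c)=\sum_{\sigma\in\gpS_4}\sigma\cdot(c\otimes c)$. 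The element $c\otimes c$ is a single two-chord diagram whose $\gpS_4$-stabilizer is the hyperoctahedral subgroup $\gpS_2\wr\gpS_2$ of order $8$ (generated by the transpositions swapping the two endpoints of each chord and by the transposition of the two chords), and whose orbit consists of the three two-chord diagrams indexed by the perfect matchings of $\{1,2,3,4\}$. Hence $\Span_\K\{\sigma\cdot(c\otimes c):\sigma\in\gpS_4\}$ is a $3$-dimensional quotient of $\Ind_{\gpS_2\wr\gpS_2}^{\gpS_4}\mathbf 1\cong S^{(4)}\oplus S^{(2,2)}$. Decomposing $c\otimes c$ along these two isotypic components, the $S^{(4)}$-component equals $\tfrac1{24}P'\in A_2'(4)$, while its $S^{(2,2)}$-component lies in $\Span_\K\{\sigma\cdot P'':\sigma\in\gpS_4\}\subseteq A_2''(4)$, because $P''$ — built from $\alt_2$ — is a nonzero vector with vanishing $S^{(4)}$-part inside the $2$-dimensional $S^{(2,2)}$-isotypic space, hence generates it. Thus $c\otimes c\in A_2'(4)+A_2''(4)$ and $A_2=A_2'+A_2''$.

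For (ii) I would pass to associated graded vector spaces through the PBW isomorphism $\theta_{2,n}\colon\gr(A_2(n))\xrightarrow{\cong}B_2(n)$ of Lemma \ref{PBW'}. Since $A_2'$ and $A_2''$ are subfunctors of $A_2$, their associated gradeds $\gr(A_2'(n))$ and $\gr(A_2''(n))$ are graded subfunctors of $\gr\circ A_2\cong B_2\circ\ab^\op$ (Proposition \ref{prop341}); in particular they are $\GL(n;\Z)$-submodules of $B_2(n)$, hence $\GL(V_n)$-submodules by Zariski density of $\GL(n;\Z)$ in $\GL(n;\K)$ acting on the polynomial module $B_2(n)$. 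By Proposition \ref{prop531}, $B_2(n)\cong\repS_{(4)}V_n\oplus\repS_{(2,2)}V_n\oplus\repS_{(1,1,1)}V_n\oplus\repS_{(2)}V_n$ is multiplicity-free with pairwise non-isomorphic simple summands, so each of $\gr(A_2'(n))$ and $\gr(A_2''(n))$ is a direct sum of a subset of these four isotypic components. I would then identify $\gr(A_2'(n))=\repS_{(4)}V_n$ and $\gr(A_2''(n))=\repS_{(2,2)}V_n\oplus\repS_{(1,1,1)}V_n\oplus\repS_{(2)}V_n$, which are complementary in $B_2(n)$. The directness follows by a leading-term argument: if $0\neq v\in A_2'(n)\cap A_2''(n)$ lay in $A_{2,k}(n)\setminus A_{2,k+1}(n)$, its class $[v]$ would be a nonzero element of $\gr(A_2'(n))\cap\gr(A_2''(n))$, a contradiction.

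The heart of the proof, and where I expect the real work, is this identification. One inclusion in each is immediate: the degree-$0$ leading term of $P'$ is the symmetrization of a pair of struts, which lands in $\Sym^4V_4=\repS_{(4)}V_4\subseteq B_{2,0}(4)$ and is nonzero, so $\gr(A_2'(n))\supseteq\repS_{(4)}V_n$; the degree-$0$ leading term of $P''$ is a nonzero element of $\repS_{(2,2)}V_4\subseteq B_{2,0}(4)$, so $\gr(A_2''(n))\supseteq\repS_{(2,2)}V_n$; and since $\repS_{(1,1,1)}V_n=B_{2,1}(n)$ and $\repS_{(2)}V_n=B_{2,2}(n)$ by Proposition \ref{prop531}, the inclusion $\gr(A_2''(n))\supseteq\repS_{(1,1,1)}V_n\oplus\repS_{(2)}V_n$ amounts to $A_{2,1}(n)\subseteq A_2''(n)$, which I would obtain from the STU relation — every degree-$2$ Jacobi diagram on $X_n$ with a trivalent vertex equals an antisymmetrized difference of two-chord diagrams, in which the totally symmetric $P'$-component cancels, hence lies in $A_2''(n)$ by the computation of step (i) applied to such differences. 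The remaining, and hardest, point is that $\gr(A_2'(n))$ acquires nothing beyond $\repS_{(4)}V_n$, equivalently $A_2'(n)\cap A_{2,1}(n)=0$: I would deduce this by checking that cabling the fully symmetric two-chord diagram $P'$ never produces a nonzero class in $B_{2,1}(n)$ or $B_{2,2}(n)$ — i.e.\ that $A_2'(n)$ is contained in the PBW-image $\chi_2\bigl(B_{2,0}(n)\bigr)$ of the pure chord diagrams (Proposition \ref{PBW}), on which the projection $A_2(n)\twoheadrightarrow B_{2,0}(n)$ is injective — the potential lower-filtration corrections being antisymmetrizations of chord endpoints that vanish on $P'$. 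Finally, the low-rank cases $n=0,1,2$, where $\repS_{(1,1,1)}V_n$ and (for $n\le1$) $\repS_{(2,2)}V_n$ degenerate, should be verified directly; there $A_2(n)$ is small and the decomposition is immediate.
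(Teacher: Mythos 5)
Your proposal is correct and follows essentially the same route as the paper's proof: the spanning statement is reduced via Lemma \ref{l1021} to showing $c\otimes c\in A_2'(4)+A_2''(4)$ (your $\gpS_4$-isotypic decomposition of the orbit of $c\otimes c$, using $\Ind_{\gpS_2\wr\gpS_2}^{\gpS_4}\mathbf 1\cong S^{(4)}\oplus S^{(2,2)}$, is a conceptual repackaging of the paper's explicit identity expressing $24\,c\otimes c$ as $P'$ plus permuted copies of $P''$), and directness is obtained by the same leading-term argument, after checking that the PBW images of $\gr(A_2'(n))$ and $\gr(A_2''(n))$ lie in the complementary subspaces $B_{2,0}'(n)$ and $B_{2,0}''(n)\oplus B_{2,1}(n)\oplus B_{2,2}(n)$ --- which the paper does exactly as you sketch for the hard inclusion, by writing every element of $A_2'(n)$ as a combination of the canonical fully symmetrized chord diagrams $P'_{ijkl}$, which are well defined independently of the attachment order and linearly independent with leading terms in $\repS_{(4)}V_n$. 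Two small caveats. First, $\GL(n;\Z)$ is \emph{not} Zariski dense in $\GL(n;\K)$ (its closure is $\{g:\det(g)^2=1\}$), so your passage from $\GL(n;\Z)$-submodules to $\GL(V_n)$-submodules should instead use density of $\SL(n;\Z)$ in $\SL(n;\K)$ together with the fact that the four summands of $B_2(n)$ remain pairwise non-isomorphic upon restriction to $\SL$; this scaffolding is in any case dispensable, since only the two containments of graded images (not the exact identification of isotypic components) are needed for disjointness, and those containments are proved by direct exhibition. Second, for the same reason no separate treatment of $n=0,1,2$ is required: the containment argument is uniform in $n$, and the degeneration of $\repS_{(2,2)}V_n$ or $\repS_{(1,1,1)}V_n$ for small $n$ is harmless.
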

  \begin{proof}
   We prove that $A_2(n)=A_2'(n)+A_2''(n)$ for $n\geq 0$.
   Since we have
   \begin{gather*}
    \begin{split}
     &\centre{}
     + 4\centre{
\begingroup%
  \makeatletter%
  \providecommand\color[2][]{%
    \errmessage{(Inkscape) Color is used for the text in Inkscape, but the package 'color.sty' is not loaded}%
    \renewcommand\color[2][]{}%
  }%
  \providecommand\transparent[1]{%
    \errmessage{(Inkscape) Transparency is used (non-zero) for the text in Inkscape, but the package 'transparent.sty' is not loaded}%
    \renewcommand\transparent[1]{}%
  }%
  \providecommand\rotatebox[2]{#2}%
  \newcommand*\fsize{\dimexpr\f@size pt\relax}%
  \newcommand*\lineheight[1]{\fontsize{\fsize}{#1\fsize}\selectfont}%
  \ifx\svgwidth\undefined%
    \setlength{\unitlength}{42.75117945bp}%
    \ifx\svgscale\undefined%
      \relax%
    \else%
      \setlength{\unitlength}{\unitlength * \real{\svgscale}}%
    \fi%
  \else%
    \setlength{\unitlength}{\svgwidth}%
  \fi%
  \global\let\svgwidth\undefined%
  \global\let\svgscale\undefined%
  \makeatother%
  \begin{picture}(1,0.83312995)%
    \lineheight{1}%
    \setlength\tabcolsep{0pt}%
    \put(0,0){\includegraphics[width=\unitlength,page=1]{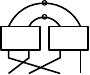}}%
    \put(0.02565245,0.33620548){\makebox(0,0)[lt]{\lineheight{1.45000005}\smash{\begin{tabular}[t]{l}$alt_2$\end{tabular}}}}%
    \put(0.57684974,0.33620548){\makebox(0,0)[lt]{\lineheight{1.45000005}\smash{\begin{tabular}[t]{l}$alt_2$\end{tabular}}}}%
  \end{picture}%
\endgroup%
}
     +4\centre{
\begingroup%
  \makeatletter%
  \providecommand\color[2][]{%
    \errmessage{(Inkscape) Color is used for the text in Inkscape, but the package 'color.sty' is not loaded}%
    \renewcommand\color[2][]{}%
  }%
  \providecommand\transparent[1]{%
    \errmessage{(Inkscape) Transparency is used (non-zero) for the text in Inkscape, but the package 'transparent.sty' is not loaded}%
    \renewcommand\transparent[1]{}%
  }%
  \providecommand\rotatebox[2]{#2}%
  \newcommand*\fsize{\dimexpr\f@size pt\relax}%
  \newcommand*\lineheight[1]{\fontsize{\fsize}{#1\fsize}\selectfont}%
  \ifx\svgwidth\undefined%
    \setlength{\unitlength}{42.75117945bp}%
    \ifx\svgscale\undefined%
      \relax%
    \else%
      \setlength{\unitlength}{\unitlength * \real{\svgscale}}%
    \fi%
  \else%
    \setlength{\unitlength}{\svgwidth}%
  \fi%
  \global\let\svgwidth\undefined%
  \global\let\svgscale\undefined%
  \makeatother%
  \begin{picture}(1,0.83312995)%
    \lineheight{1}%
    \setlength\tabcolsep{0pt}%
    \put(0,0){\includegraphics[width=\unitlength,page=1]{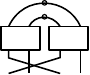}}%
    \put(0.02565245,0.33620548){\makebox(0,0)[lt]{\lineheight{1.45000005}\smash{\begin{tabular}[t]{l}$alt_2$\end{tabular}}}}%
    \put(0.57684974,0.33620548){\makebox(0,0)[lt]{\lineheight{1.45000005}\smash{\begin{tabular}[t]{l}$alt_2$\end{tabular}}}}%
  \end{picture}%
\endgroup%
}\\
     &=(8\:c\otimes c+8\:\centre{}+8\:\centre{})
     +(8\:c\otimes c-8\:\centre{})+(8\:c\otimes c
     -8\:\centre{})\\
     &=24\:c\otimes c,
    \end{split}
   \end{gather*}
   it follows that $c\otimes c\in A_2'(n)+ A_2''(n)$.
   Thus, we have $A_2(n)=A_2'(n)+A_2''(n)$ by Lemma \ref{l1021}.

   In order to prove that $A_2'(n)\cap A_2''(n)=0$, it suffices to show that
   $$\theta_{2,n}(\gr(A_2'(n)))\subset B_{2,0}'(n),\quad \theta_{2,n}(\gr(A_2''(n)))\subset B_{2,0}''(n)\oplus B_{2,1}(n)\oplus B_{2,2}(n).$$

   Let $P'_{ijkl}=\centre{
\begingroup%
  \makeatletter%
  \providecommand\color[2][]{%
    \errmessage{(Inkscape) Color is used for the text in Inkscape, but the package 'color.sty' is not loaded}%
    \renewcommand\color[2][]{}%
  }%
  \providecommand\transparent[1]{%
    \errmessage{(Inkscape) Transparency is used (non-zero) for the text in Inkscape, but the package 'transparent.sty' is not loaded}%
    \renewcommand\transparent[1]{}%
  }%
  \providecommand\rotatebox[2]{#2}%
  \newcommand*\fsize{\dimexpr\f@size pt\relax}%
  \newcommand*\lineheight[1]{\fontsize{\fsize}{#1\fsize}\selectfont}%
  \ifx\svgwidth\undefined%
    \setlength{\unitlength}{30.75117737bp}%
    \ifx\svgscale\undefined%
      \relax%
    \else%
      \setlength{\unitlength}{\unitlength * \real{\svgscale}}%
    \fi%
  \else%
    \setlength{\unitlength}{\svgwidth}%
  \fi%
  \global\let\svgwidth\undefined%
  \global\let\svgscale\undefined%
  \makeatother%
  \begin{picture}(1,1.27773764)%
    \lineheight{1}%
    \setlength\tabcolsep{0pt}%
    \put(0,0){\includegraphics[width=\unitlength,page=1]{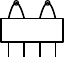}}%
    \put(0.20732822,0.74636054){\makebox(0,0)[lt]{\lineheight{1.45000005}\smash{\begin{tabular}[t]{l}$sym_4$\end{tabular}}}}%
    \put(0.08702019,0.0141558){\makebox(0,0)[lt]{\lineheight{1.45000005}\smash{\begin{tabular}[t]{l}$i$\end{tabular}}}}%
    \put(0.32329939,0.01343318){\makebox(0,0)[lt]{\lineheight{1.45000005}\smash{\begin{tabular}[t]{l}$j$\end{tabular}}}}%
    \put(0.56525924,0.01239268){\makebox(0,0)[lt]{\lineheight{1.45000005}\smash{\begin{tabular}[t]{l}$k$\end{tabular}}}}%
    \put(0.81139436,0.0137094){\makebox(0,0)[lt]{\lineheight{1.45000005}\smash{\begin{tabular}[t]{l}$l$\end{tabular}}}}%
    \put(0,0){\includegraphics[width=\unitlength,page=2]{symijkl.pdf}}%
  \end{picture}%
\endgroup%
}$ be a linear sum of elements of $A_2(n)$ such that each endpoint of two chords are attached to the $i,j,k,l$-th component of $X_n$, respectively, where $1\leq i\leq j\leq k\leq l\leq n$.
   Note that $P'_{ijkl}$ is defined independently of how to attach endpoints to the same component of $X_n$ because of the symmetrizer.
   Since an element of $A_2'(n)$ is a linear sum of $P'_{ijkl}$ and since we have
   $$\theta_{2,n}(P'_{ijkl})
   =\; \centre{
\begingroup%
  \makeatletter%
  \providecommand\color[2][]{%
    \errmessage{(Inkscape) Color is used for the text in Inkscape, but the package 'color.sty' is not loaded}%
    \renewcommand\color[2][]{}%
  }%
  \providecommand\transparent[1]{%
    \errmessage{(Inkscape) Transparency is used (non-zero) for the text in Inkscape, but the package 'transparent.sty' is not loaded}%
    \renewcommand\transparent[1]{}%
  }%
  \providecommand\rotatebox[2]{#2}%
  \newcommand*\fsize{\dimexpr\f@size pt\relax}%
  \newcommand*\lineheight[1]{\fontsize{\fsize}{#1\fsize}\selectfont}%
  \ifx\svgwidth\undefined%
    \setlength{\unitlength}{31.61114041bp}%
    \ifx\svgscale\undefined%
      \relax%
    \else%
      \setlength{\unitlength}{\unitlength * \real{\svgscale}}%
    \fi%
  \else%
    \setlength{\unitlength}{\svgwidth}%
  \fi%
  \global\let\svgwidth\undefined%
  \global\let\svgscale\undefined%
  \makeatother%
  \begin{picture}(1,1.08544569)%
    \lineheight{1}%
    \setlength\tabcolsep{0pt}%
    \put(0,0){\includegraphics[width=\unitlength,page=1]{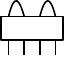}}%
    \put(0.20168797,0.59898024){\makebox(0,0)[lt]{\lineheight{1.45000005}\smash{\begin{tabular}[t]{l}$sym_4$\end{tabular}}}}%
    \put(0.00255826,0.01354177){\makebox(0,0)[lt]{\lineheight{1.45000005}\smash{\begin{tabular}[t]{l}$v_i$\end{tabular}}}}%
    \put(0.28188021,0.01306774){\makebox(0,0)[lt]{\lineheight{1.45000005}\smash{\begin{tabular}[t]{l}$v_j$\end{tabular}}}}%
    \put(0.526938,0.01681943){\makebox(0,0)[lt]{\lineheight{1.45000005}\smash{\begin{tabular}[t]{l}$v_k$\end{tabular}}}}%
    \put(0.82872918,0.01638516){\makebox(0,0)[lt]{\lineheight{1.45000005}\smash{\begin{tabular}[t]{l}$v_l$\end{tabular}}}}%
  \end{picture}%
\endgroup%
}\;\in B_{2,0}'(n),$$
   it follows that $\theta_{2,n}(\gr(A_2'(n)))\subset B_{2,0}'(n)$.

   Let $P''_{ijkl}=\centre{
\begingroup%
  \makeatletter%
  \providecommand\color[2][]{%
    \errmessage{(Inkscape) Color is used for the text in Inkscape, but the package 'color.sty' is not loaded}%
    \renewcommand\color[2][]{}%
  }%
  \providecommand\transparent[1]{%
    \errmessage{(Inkscape) Transparency is used (non-zero) for the text in Inkscape, but the package 'transparent.sty' is not loaded}%
    \renewcommand\transparent[1]{}%
  }%
  \providecommand\rotatebox[2]{#2}%
  \newcommand*\fsize{\dimexpr\f@size pt\relax}%
  \newcommand*\lineheight[1]{\fontsize{\fsize}{#1\fsize}\selectfont}%
  \ifx\svgwidth\undefined%
    \setlength{\unitlength}{42.75117945bp}%
    \ifx\svgscale\undefined%
      \relax%
    \else%
      \setlength{\unitlength}{\unitlength * \real{\svgscale}}%
    \fi%
  \else%
    \setlength{\unitlength}{\svgwidth}%
  \fi%
  \global\let\svgwidth\undefined%
  \global\let\svgscale\undefined%
  \makeatother%
  \begin{picture}(1,1.01270299)%
    \lineheight{1}%
    \setlength\tabcolsep{0pt}%
    \put(0,0){\includegraphics[width=\unitlength,page=1]{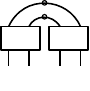}}%
    \put(0.02565245,0.51577852){\makebox(0,0)[lt]{\lineheight{1.45000005}\smash{\begin{tabular}[t]{l}$alt_2$\end{tabular}}}}%
    \put(0.57684974,0.51577852){\makebox(0,0)[lt]{\lineheight{1.45000005}\smash{\begin{tabular}[t]{l}$alt_2$\end{tabular}}}}%
    \put(0.05647505,0.01348056){\makebox(0,0)[lt]{\lineheight{1.45000005}\smash{\begin{tabular}[t]{l}$i$\end{tabular}}}}%
    \put(0.28921515,0.0103281){\makebox(0,0)[lt]{\lineheight{1.45000005}\smash{\begin{tabular}[t]{l}$j$\end{tabular}}}}%
    \put(0.63600086,0.01092262){\makebox(0,0)[lt]{\lineheight{1.45000005}\smash{\begin{tabular}[t]{l}$k$\end{tabular}}}}%
    \put(0.86863185,0.00824631){\makebox(0,0)[lt]{\lineheight{1.45000005}\smash{\begin{tabular}[t]{l}$l$\end{tabular}}}}%
    \put(0,0){\includegraphics[width=\unitlength,page=2]{altijkl.pdf}}%
  \end{picture}%
\endgroup%
}$ be a linear sum of elements of $A_2(n)$ such that each endpoint of two chords are attached to the $i,j,k,l$-th component, respectively, where $i,j,k,l \in\{1,\cdots, n\}$.
   Note that $P''_{ijkl}$ has ambiguity of how to attach endpoints to the same component of $X_n$, but the difference is an element of $A_{2,1}(n)$.
   Since an element of $A_2''(n)$ is a linear sum of $P''_{ijkl}$ and since we have
   $$
     \theta_{2,n}(P''_{ijkl})\;
     -\;\centre{
\begingroup%
  \makeatletter%
  \providecommand\color[2][]{%
    \errmessage{(Inkscape) Color is used for the text in Inkscape, but the package 'color.sty' is not loaded}%
    \renewcommand\color[2][]{}%
  }%
  \providecommand\transparent[1]{%
    \errmessage{(Inkscape) Transparency is used (non-zero) for the text in Inkscape, but the package 'transparent.sty' is not loaded}%
    \renewcommand\transparent[1]{}%
  }%
  \providecommand\rotatebox[2]{#2}%
  \newcommand*\fsize{\dimexpr\f@size pt\relax}%
  \newcommand*\lineheight[1]{\fontsize{\fsize}{#1\fsize}\selectfont}%
  \ifx\svgwidth\undefined%
    \setlength{\unitlength}{42.75117945bp}%
    \ifx\svgscale\undefined%
      \relax%
    \else%
      \setlength{\unitlength}{\unitlength * \real{\svgscale}}%
    \fi%
  \else%
    \setlength{\unitlength}{\svgwidth}%
  \fi%
  \global\let\svgwidth\undefined%
  \global\let\svgscale\undefined%
  \makeatother%
  \begin{picture}(1,0.90580655)%
    \lineheight{1}%
    \setlength\tabcolsep{0pt}%
    \put(0,0){\includegraphics[width=\unitlength,page=1]{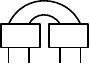}}%
    \put(0.02565245,0.43537995){\makebox(0,0)[lt]{\lineheight{1.45000005}\smash{\begin{tabular}[t]{l}$alt_2$\end{tabular}}}}%
    \put(0.57684974,0.43537995){\makebox(0,0)[lt]{\lineheight{1.45000005}\smash{\begin{tabular}[t]{l}$alt_2$\end{tabular}}}}%
    \put(0.05925393,0.00950094){\makebox(0,0)[lt]{\lineheight{1.45000005}\smash{\begin{tabular}[t]{l}$v_i$\end{tabular}}}}%
    \put(0.27912692,0.00966257){\makebox(0,0)[lt]{\lineheight{1.45000005}\smash{\begin{tabular}[t]{l}$v_j$\end{tabular}}}}%
    \put(0.6318326,0.01250076){\makebox(0,0)[lt]{\lineheight{1.45000005}\smash{\begin{tabular}[t]{l}$v_k$\end{tabular}}}}%
    \put(0.87141073,0.00982445){\makebox(0,0)[lt]{\lineheight{1.45000005}\smash{\begin{tabular}[t]{l}$v_l$\end{tabular}}}}%
  \end{picture}%
\endgroup%
} \;\in B_{2,1}(n)\oplus B_{2,2}(n)
   $$
   by Lemma \ref{PBW'}, it follows that $\theta_{2,n}(\gr(A_2''(n)))\subset B_{2,0}''(n)\oplus B_{2,1}(n)\oplus B_{2,2}(n).$
  \end{proof}
  \begin{proposition}
   We have
   \begin{gather}\label{eqB212}
     \begin{split}
       \theta_{2,n}(\gr(A_2'(n)))&=B_{2,0}'(n),\\
       \theta_{2,n}(\gr(A_2''(n)))&=B_{2,0}''(n)\oplus B_{2,1}(n)\oplus B_{2,2}(n).
     \end{split}
   \end{gather}
  \end{proposition}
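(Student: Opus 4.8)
The plan is to deduce both equalities in (\ref{eqB212}) from the direct decomposition $A_2 = A_2' \oplus A_2''$ established in Proposition \ref{prop1021}, together with the irreducible decomposition (\ref{eqdecompB2}) of $B_2(n)$ and the two inclusions already proved inside Proposition \ref{prop1021}. The key observation is that the substantive half of each equality — namely $\theta_{2,n}(\gr(A_2'(n)))\subset B_{2,0}'(n)$ and $\theta_{2,n}(\gr(A_2''(n)))\subset B_{2,0}''(n)\oplus B_{2,1}(n)\oplus B_{2,2}(n)$ — is already available from the proof of Proposition \ref{prop1021}, so only a formal complementation argument is needed to upgrade the inclusions to equalities.

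First I would note that, since $A_2(n)=A_2'(n)\oplus A_2''(n)$ as filtered vector spaces (Proposition \ref{prop1021}), applying $\gr$ and then the PBW isomorphism $\theta_{2,n}$ of Lemma \ref{PBW'} produces a direct sum decomposition
\[
  B_2(n)=\theta_{2,n}(\gr(A_2'(n)))\oplus\theta_{2,n}(\gr(A_2''(n))).
\]
On the other hand, (\ref{eqdecompB2}) gives the complementary decomposition $B_2(n)=B_{2,0}'(n)\oplus\bigl(B_{2,0}''(n)\oplus B_{2,1}(n)\oplus B_{2,2}(n)\bigr)$. I would then invoke the elementary fact that if a vector space $W$ admits two direct sum decompositions $W=U'\oplus U''=W'\oplus W''$ with $U'\subset W'$ and $U''\subset W''$, then $U'=W'$ and $U''=W''$: for $w\in W'$, writing $w=u'+u''$ with $u'\in U'\subset W'$ and $u''\in U''\subset W''$ forces $u''=w-u'\in W'\cap W''=0$, so $w=u'\in U'$. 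Taking $W=B_2(n)$, $U'=\theta_{2,n}(\gr(A_2'(n)))$, $U''=\theta_{2,n}(\gr(A_2''(n)))$, $W'=B_{2,0}'(n)$, and $W''=B_{2,0}''(n)\oplus B_{2,1}(n)\oplus B_{2,2}(n)$, the two inclusions from Proposition \ref{prop1021} supply exactly the hypotheses $U'\subset W'$ and $U''\subset W''$, and we conclude (\ref{eqB212}). Since the argument is purely formal it holds uniformly in $n$, including the degenerate cases $n=0,1,2$ in which some of the summands on the right vanish.

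I do not expect any genuine obstacle: all of the actual work — Lemma \ref{l1021}, the computation of $\theta_{2,n}$ on the diagrams $P'_{ijkl}$ and $P''_{ijkl}$, and the proof that $A_2=A_2'\oplus A_2''$ — was carried out in Proposition \ref{prop1021}, and the present statement merely records the resulting equalities. A more hands-on route would be to check surjectivity of $\theta_{2,n}\circ\gr$ on each summand directly, using that the $\theta_{2,n}(P'_{ijkl})$ span $B_{2,0}'(n)$ while the $\theta_{2,n}(P''_{ijkl})$, together with the variants of $P''_{ijkl}$ in which two endpoints are attached to a common component of $X_n$, span $B_{2,0}''(n)\oplus B_{2,1}(n)\oplus B_{2,2}(n)$ by Lemma \ref{PBW'}; but the complementation argument above is shorter and cleaner.
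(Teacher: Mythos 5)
Your proof is correct and follows the same route as the paper, whose proof of this proposition is just the one-line remark that it follows from Proposition \ref{prop1021} and Lemma \ref{PBW'}; you have simply made explicit the complementation argument (two direct-sum decompositions of $B_2(n)$ with termwise inclusions must coincide) that the paper leaves implicit. The inclusions you cite are indeed established inside the proof of Proposition \ref{prop1021}, and the passage from the filtered direct sum $A_2(n)=A_2'(n)\oplus A_2''(n)$ to $B_2(n)=\theta_{2,n}(\gr(A_2'(n)))\oplus\theta_{2,n}(\gr(A_2''(n)))$ is exactly the intended use of Lemma \ref{PBW'}.
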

  \begin{proof}
   This follows from Proposition \ref{prop1021} and Lemma \ref{PBW'}.
  \end{proof}

 \subsection{Action of $\gr(\IA(n))$ on $B_2(n)$}\label{ss63}
   In order to study the $\Aut(F_n)$-module structure of $A_2''(n)$, we consider whether the restrictions of the $\GL(n;\Z)$-module homomorphism (\ref{braBn})
   \begin{equation}\label{braB202}
    [\cdot,\cdot]:B_{2,0}''(n)\otimes_{\Z}\opegr^{1}(\IA(n))\rightarrow B_{2,1}(n),
   \end{equation}
   \begin{equation}\label{braB21}
    [\cdot,\cdot]:B_{2,1}(n)\otimes_{\Z}\opegr^{1}(\IA(n))\rightarrow B_{2,2}(n)
   \end{equation}
   vanish or not.

   For $n=1,2$, the maps (\ref{braB202}) and (\ref{braB21}) vanish, because the $\IA(n)$-actions on $A_2(n)$ are trivial by Corollary \ref{IA2}.

   The bracket maps (\ref{braB202}) and (\ref{braB21}) induce $\GL(n;\Z)$-module homomorphisms
   \begin{equation*}
     \rho_1: B_{2,0}''(n)\rightarrow \Hom(\opegr^1(\IA(n)), B_{2,1}(n)),
   \end{equation*}
   \begin{equation*}
     \rho_2: B_{2,1}(n)\rightarrow \Hom(\opegr^1(\IA(n)), B_{2,2}(n)),
   \end{equation*}
   respectively.

   For distinct elements $i,j,k\in [n]$, define $K_{i,j,k}\in\IA(n)$ by
   \begin{equation*}
      K_{i,j,k}(x_i)=x_i [x_j,x_k],\quad K_{i,j,k}(x_l)=x_l\;\text{ for }l\neq i.
   \end{equation*}

   \begin{lemma}\label{prop1031}
    For $n\geq 3$, the $\GL(n;\Z)$-module homomorphisms $\rho_1$ and $\rho_2$ are injective.
   \end{lemma}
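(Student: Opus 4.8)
The plan is to reduce both injectivity statements to nonvanishing statements, using that the source modules are simple, and then to verify nonvanishing by computing an explicit bracket with a Magnus generator $K_{i,j,k}$ of $\IA(n)$.

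First, note that for $n\geq 3$ the $\GL(n;\Z)$-modules $B_{2,0}''(n)\cong \repS_{(2,2)}V_n$ and $B_{2,1}(n)\cong \repS_{(1,1,1)}V_n$ are nonzero and simple. Indeed, these are obtained by restricting the simple $\GL(V_n)$-modules $\repS_{(2,2)}V_n$ and $\repS_{(1,1,1)}V_n$ along $\GL(n;\Z)\hookrightarrow \GL(n;\K)\xrightarrow{{}^t(\cdot)^{-1}}\GL(V_n)$; since $\repS_\lambda V_n$ is already simple as an $\mathrm{SL}(n;\K)$-module whenever $r(\lambda)\leq n$, and $\mathrm{SL}(n;\Z)$ is Zariski dense in $\mathrm{SL}(n;\K)$, the module $\repS_\lambda V_n$ stays simple over $\mathrm{SL}(n;\Z)$, hence over $\GL(n;\Z)$. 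Because $\rho_1$ and $\rho_2$ are $\GL(n;\Z)$-module homomorphisms whose sources are simple, each of them is injective if and only if it is nonzero. So it remains to exhibit, for each $n\geq 3$, an element $u$ of the source and a class $g\in\opegr^1(\IA(n))$ with $[u,g]\neq 0$ in the target.

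For $\rho_2$, take a lift $\ti{T}\in A_{2,1}(n)$ of the tripod colored by $v_1,v_2,v_3$ (a $Y$-shaped Jacobi diagram with one leg on each of the $1$st, $2$nd and $3$rd arcs), and take $g$ to be the class of $K_{1,2,3}$. Expanding $\ti{T}\cdot K_{1,2,3}$ with the box notation as in the proofs of Lemmas \ref{l611} and \ref{l613}, the part lying in $A_{2,1}(n)$ cancels with $-\ti{T}$ (as it must by Lemma \ref{l611}), and, after using the STU relation to normalize diagrams having two univalent vertices on a common arc, the remaining term $[\ti{T},K_{1,2,3}]\in A_{2,2}(n)$ represents a nonzero scalar multiple of the degree-$2$ bubble diagram colored by $v_2$ and $v_3$ in $B_{2,2}(n)$. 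Hence $\rho_2\neq 0$, so $\rho_2$ is injective. For $\rho_1$, start from an explicit nonzero element of $B_{2,0}''(n)\cong\repS_{(2,2)}V_n$, for instance the image under $\theta_{2,n}$ of an $\alt_2$-type combination of two-chord diagrams built from $P''$ as in Section \ref{ss62} with colors among $v_1,\dots,v_4$ (or $v_1,v_2,v_3$ when $n=3$); applying a suitable $K_{i,j,k}$ and expanding in the same way, the leading term in $B_{2,1}(n)$ is computed to be a nonzero multiple of the tripod colored by $v_i,v_j,v_k$, so $\rho_1\neq 0$ and $\rho_1$ is injective.

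The main obstacle is the two diagrammatic bracket computations themselves: one must carry out the box-notation expansion of $K_{i,j,k}$ acting on the chosen diagram, confirm that the lowest filtration part cancels against $-u$, then correctly extract the next filtration piece (keeping track of the STU corrections produced when two univalent vertices land on the same arc), and finally check that the resulting element of $B_{2,k+1}(n)$ does not vanish and that the chosen source element genuinely lies in $B_{2,0}''(n)$ rather than in $B_{2,0}'(n)$. A possible alternative for the nonvanishing step is to apply the $\Aut(F_n)$-equivariant weight system $W_n$ of a Casimir Lie algebra (e.g.\ $\mathfrak{gl}_N$ or $\mathfrak{sl}_2$ for suitable $N$) to the relevant brackets and check that the images are nonzero there.
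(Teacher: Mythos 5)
Your proposal follows essentially the same route as the paper: reduce injectivity to nonvanishing via simplicity of $B_{2,0}''(n)\cong\repS_{(2,2)}V_n$ and $B_{2,1}(n)\cong\repS_{(1,1,1)}V_n$ as $\GL(n;\Z)$-modules, then exhibit nonvanishing by bracketing explicit diagrams with Magnus generators $K_{i,j,k}$. The only difference is that the paper actually carries out the two deferred computations, taking $u$ to be an explicit difference of two two-chord diagrams in $B_{2,0}''(n)$ with $\rho_1(u)(K_{3,1,2})$ equal to $6$ times a tripod, and the tripod itself with $\rho_2(\text{tripod})(K_{1,3,2})$ equal to the nonzero bubble in $B_{2,2}(n)$ --- exactly the verifications you describe as the remaining obstacle.
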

   \begin{proof}
    Let
    $$u=\:\centre{
\begingroup%
  \makeatletter%
  \providecommand\color[2][]{%
    \errmessage{(Inkscape) Color is used for the text in Inkscape, but the package 'color.sty' is not loaded}%
    \renewcommand\color[2][]{}%
  }%
  \providecommand\transparent[1]{%
    \errmessage{(Inkscape) Transparency is used (non-zero) for the text in Inkscape, but the package 'transparent.sty' is not loaded}%
    \renewcommand\transparent[1]{}%
  }%
  \providecommand\rotatebox[2]{#2}%
  \newcommand*\fsize{\dimexpr\f@size pt\relax}%
  \newcommand*\lineheight[1]{\fontsize{\fsize}{#1\fsize}\selectfont}%
  \ifx\svgwidth\undefined%
    \setlength{\unitlength}{64.05482107bp}%
    \ifx\svgscale\undefined%
      \relax%
    \else%
      \setlength{\unitlength}{\unitlength * \real{\svgscale}}%
    \fi%
  \else%
    \setlength{\unitlength}{\svgwidth}%
  \fi%
  \global\let\svgwidth\undefined%
  \global\let\svgscale\undefined%
  \makeatother%
  \begin{picture}(1,0.03151534)%
    \lineheight{1}%
    \setlength\tabcolsep{0pt}%
    \put(0,0){\includegraphics[width=\unitlength,page=1]{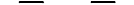}}%
    \put(-0.00236309,0.00652332){\makebox(0,0)[lt]{\lineheight{1.45000005}\smash{\begin{tabular}[t]{l}$v_1$\end{tabular}}}}%
    \put(0.36632651,0.00550371){\makebox(0,0)[lt]{\lineheight{1.45000005}\smash{\begin{tabular}[t]{l}$v_1$\end{tabular}}}}%
    \put(0.52397177,0.00741107){\makebox(0,0)[lt]{\lineheight{1.45000005}\smash{\begin{tabular}[t]{l}$v_2$\end{tabular}}}}%
    \put(0.90419578,0.00748805){\makebox(0,0)[lt]{\lineheight{1.45000005}\smash{\begin{tabular}[t]{l}$v_2$\end{tabular}}}}%
  \end{picture}%
\endgroup%
}\:-\:\centre{
\begingroup%
  \makeatletter%
  \providecommand\color[2][]{%
    \errmessage{(Inkscape) Color is used for the text in Inkscape, but the package 'color.sty' is not loaded}%
    \renewcommand\color[2][]{}%
  }%
  \providecommand\transparent[1]{%
    \errmessage{(Inkscape) Transparency is used (non-zero) for the text in Inkscape, but the package 'transparent.sty' is not loaded}%
    \renewcommand\transparent[1]{}%
  }%
  \providecommand\rotatebox[2]{#2}%
  \newcommand*\fsize{\dimexpr\f@size pt\relax}%
  \newcommand*\lineheight[1]{\fontsize{\fsize}{#1\fsize}\selectfont}%
  \ifx\svgwidth\undefined%
    \setlength{\unitlength}{64.05482107bp}%
    \ifx\svgscale\undefined%
      \relax%
    \else%
      \setlength{\unitlength}{\unitlength * \real{\svgscale}}%
    \fi%
  \else%
    \setlength{\unitlength}{\svgwidth}%
  \fi%
  \global\let\svgwidth\undefined%
  \global\let\svgscale\undefined%
  \makeatother%
  \begin{picture}(1,0.03151534)%
    \lineheight{1}%
    \setlength\tabcolsep{0pt}%
    \put(0,0){\includegraphics[width=\unitlength,page=1]{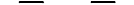}}%
    \put(-0.00236309,0.00652332){\makebox(0,0)[lt]{\lineheight{1.45000005}\smash{\begin{tabular}[t]{l}$v_1$\end{tabular}}}}%
    \put(0.36632651,0.00550371){\makebox(0,0)[lt]{\lineheight{1.45000005}\smash{\begin{tabular}[t]{l}$v_2$\end{tabular}}}}%
    \put(0.52397177,0.00741107){\makebox(0,0)[lt]{\lineheight{1.45000005}\smash{\begin{tabular}[t]{l}$v_1$\end{tabular}}}}%
    \put(0.90419578,0.00748805){\makebox(0,0)[lt]{\lineheight{1.45000005}\smash{\begin{tabular}[t]{l}$v_2$\end{tabular}}}}%
  \end{picture}%
\endgroup%
}\;\in B_{2,0}''(n).$$

    We have
    $$\rho_1(u)(K_{3,1,2})=6\:\centre{
\begingroup%
  \makeatletter%
  \providecommand\color[2][]{%
    \errmessage{(Inkscape) Color is used for the text in Inkscape, but the package 'color.sty' is not loaded}%
    \renewcommand\color[2][]{}%
  }%
  \providecommand\transparent[1]{%
    \errmessage{(Inkscape) Transparency is used (non-zero) for the text in Inkscape, but the package 'transparent.sty' is not loaded}%
    \renewcommand\transparent[1]{}%
  }%
  \providecommand\rotatebox[2]{#2}%
  \newcommand*\fsize{\dimexpr\f@size pt\relax}%
  \newcommand*\lineheight[1]{\fontsize{\fsize}{#1\fsize}\selectfont}%
  \ifx\svgwidth\undefined%
    \setlength{\unitlength}{37.2932625bp}%
    \ifx\svgscale\undefined%
      \relax%
    \else%
      \setlength{\unitlength}{\unitlength * \real{\svgscale}}%
    \fi%
  \else%
    \setlength{\unitlength}{\svgwidth}%
  \fi%
  \global\let\svgwidth\undefined%
  \global\let\svgscale\undefined%
  \makeatother%
  \begin{picture}(1,0.72197429)%
    \lineheight{1}%
    \setlength\tabcolsep{0pt}%
    \put(-0.00405884,0.00945316){\makebox(0,0)[lt]{\lineheight{1.45000005}\smash{\begin{tabular}[t]{l}$v_1$\end{tabular}}}}%
    \put(0.40665432,0.01638878){\makebox(0,0)[lt]{\lineheight{1.45000005}\smash{\begin{tabular}[t]{l}$v_2$\end{tabular}}}}%
    \put(0.83544689,0.01214915){\makebox(0,0)[lt]{\lineheight{1.45000005}\smash{\begin{tabular}[t]{l}$v_3$\end{tabular}}}}%
    \put(0,0){\includegraphics[width=\unitlength,page=1]{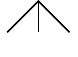}}%
  \end{picture}%
\endgroup%
}\neq 0\in B_{2,1}(n).$$
    Thus, we have $\rho_1\neq 0$. Since $B_{2,0}''(n)$ is simple, it follows that $\rho_1$ is injective.

    We have
    $$\rho_2(\centre{})(K_{1,3,2})=\:\centre{
\begingroup%
  \makeatletter%
  \providecommand\color[2][]{%
    \errmessage{(Inkscape) Color is used for the text in Inkscape, but the package 'color.sty' is not loaded}%
    \renewcommand\color[2][]{}%
  }%
  \providecommand\transparent[1]{%
    \errmessage{(Inkscape) Transparency is used (non-zero) for the text in Inkscape, but the package 'transparent.sty' is not loaded}%
    \renewcommand\transparent[1]{}%
  }%
  \providecommand\rotatebox[2]{#2}%
  \newcommand*\fsize{\dimexpr\f@size pt\relax}%
  \newcommand*\lineheight[1]{\fontsize{\fsize}{#1\fsize}\selectfont}%
  \ifx\svgwidth\undefined%
    \setlength{\unitlength}{40.78738117bp}%
    \ifx\svgscale\undefined%
      \relax%
    \else%
      \setlength{\unitlength}{\unitlength * \real{\svgscale}}%
    \fi%
  \else%
    \setlength{\unitlength}{\svgwidth}%
  \fi%
  \global\let\svgwidth\undefined%
  \global\let\svgscale\undefined%
  \makeatother%
  \begin{picture}(1,0.20229739)%
    \lineheight{1}%
    \setlength\tabcolsep{0pt}%
    \put(0,0){\includegraphics[width=\unitlength,page=1]{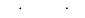}}%
    \put(-0.00371113,0.05753813){\makebox(0,0)[lt]{\lineheight{1.45000005}\smash{\begin{tabular}[t]{l}$v_1$\end{tabular}}}}%
    \put(0.8495436,0.05199924){\makebox(0,0)[lt]{\lineheight{1.45000005}\smash{\begin{tabular}[t]{l}$v_1$\end{tabular}}}}%
    \put(0,0){\includegraphics[width=\unitlength,page=2]{B22.pdf}}%
  \end{picture}%
\endgroup%
}\neq 0\in B_{2,2}(n).$$
    Since $B_{2,1}(n)$ is simple, it follows that $\rho_2$ is injective in a similar way.
   \end{proof}
   \begin{remark}
    The restriction of the $\GL(n;\Z)$-module homomorphism (\ref{braBn})
    $$[\cdot,\cdot]:B_{2,0}''(n)\otimes_{\Z}\opegr^{2}(\IA(n))\rightarrow B_{2,2}(n)$$
    also induces a $\GL(n;\Z)$-module homomorphism
    $$\rho_3:B_{2,0}''(n)\rightarrow \Hom(\opegr^2(\IA(n)), B_{2,2}(n)).$$
    We can also check that $\rho_3$ is injective.
    This is because we have
    \begin{gather*}
      \begin{split}
      [u,[K_{3,1,2},K_{1,3,2}]]&= [[u,K_{3,1,2}],K_{1,3,2}]-[[u,K_{1,3,2}],K_{3,1,2}]\\
      &=[6\:\centre{}\;,K_{1,3,2}]=6\:\centre{}\:\neq 0\in B_{2,2}(n)
      \end{split}
    \end{gather*}
    since $[u,K_{1,3,2}]=0$.
   \end{remark}

 \subsection{$\Aut(F_n)$-module structure of $A_2(n)$}\label{ss64}
  Here, we consider the $\Aut(F_n)$-module structure of $A_2(n)$.

  By Proposition \ref{prop1021}, we have a decomposition of $\Aut(F_n)$-modules
  $$
    A_2(n)=A_2'(n)\oplus A_2''(n)
  $$
  and a filtration of $\Aut(F_n)$-modules
  $$A_2''(n)\supset A_{2,1}(n)\supset A_{2,2}(n)\supset 0.$$

  Moreover, by (\ref{eqdecompB2}) and (\ref{eqB212}), we have $\GL(n;\Z)$-module isomorphisms
  \begin{gather*}
  \begin{split}
    \theta_{2,n}(\gr(A_2'(n)))&=B_{2,0}'(n)\cong \repS_{(4)}V_n,\\
    \theta_{2,n}(\gr(A_2''(n)))&=B_{2,0}''(n)\oplus B_{2,1}(n)\oplus B_{2,2}(n)
     \cong \repS_{(2,2)}V_n\oplus \repS_{(1,1,1)}V_n\oplus \repS_{(2)}V_n,\\
    \theta_{2,n}(\gr(A_{2,1}(n)))&=B_{2,1}(n)\oplus B_{2,2}(n)
     \cong\repS_{(1,1,1)}V_n\oplus \repS_{(2)}V_n,\\
    \theta_{2,n}(\gr(A_{2,2}(n)))&=B_{2,2}(n)
     \cong \repS_{(2)}V_n.
  \end{split}
  \end{gather*}
  Thus, it follows that $A_2'(n)$ and $A_{2,2}(n)$ are simple $\Aut(F_n)$-modules for any $n\geq 1$.

  \begin{theorem}\label{th1042}
   The $\Aut(F_n)$-module $A_2'(n)$ is simple for any $n\geq 1$.

   For $n=1$, $\Aut(F_1)\cong \Z/2\Z$ acts on $A_2''(1)\cong \K$ trivially.

   For $n=2$, $A_2''(2)$ has an irreducible decomposition
   $$A_2''(2)=W\oplus A_{2,2}(2),$$
   where $\Aut(F_2)$ acts on $W\cong \K$ trivially.

   For $n\geq 3$, $A_2''(n)$ admits a unique composition series of length $3$
   $$A_2''(n)\supsetneq A_{2,1}(n)\supsetneq A_{2,2}(n)\supsetneq 0;$$
   that is, $A_2''(n)$ has no nonzero proper $\Aut(F_n)$-submodules other than $A_{2,1}(n)$ and $A_{2,2}(n)$. Therefore, $A_2''(n)$ and $A_{2,1}(n)$ are indecomposable.
  \end{theorem}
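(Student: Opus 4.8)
The plan is to treat the regimes $n\geq 3$, $n=2$, $n=1$ separately, with essentially all of the work in the case $n\geq 3$. The claims that $A_2'(n)$ and $A_{2,2}(n)$ are simple are already in hand from the $\GL(n;\Z)$-module isomorphisms established just before the theorem: since $A_2'(n)\cap A_{2,1}(n)=0$ and $A_{2,3}(n)=0$, Lemma~\ref{l613} shows $\IA(n)$ acts trivially on $A_2'(n)$ and on $A_{2,2}(n)$, so these are the $\GL(n;\Z)$-modules $\repS_{(4)}V_n$ and $\repS_{(2)}V_n$, which are simple because they are simple rational $\GL(n;\K)$-modules and $\GL(n;\Z)$ is Zariski-dense in $\GL(n;\K)$. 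Hence the content is the structure of $A_2''(n)$, whose associated graded is $B_{2,0}''(n)\oplus B_{2,1}(n)\oplus B_{2,2}(n)$.

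For $n=1$ (where $\IA(1)=1$) and $n=2$ (where $\IA(2)$ acts trivially on $A_d(2)$ by Corollary~\ref{IA2}), the group $\Aut(F_n)$ acts on $A_2''(n)$ through $\GL(n;\Z)$, and I would argue by dimension. For $n=1$ the summands $B_{2,0}''(1)\cong\repS_{(2,2)}V_1$ and $B_{2,1}(1)\cong\repS_{(1,1,1)}V_1$ vanish, so $A_2''(1)=A_{2,2}(1)\cong\repS_{(2)}V_1\cong\K$, on which the nontrivial element $s$ of $\Aut(F_1)$ acts by $(-1)^{2\cdot 2-2}=1$, i.e.\ trivially. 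For $n=2$ one has $B_{2,1}(2)\cong\repS_{(1,1,1)}V_2=0$, so $A_2''(2)$ is $4$-dimensional and fits in a short exact sequence $0\to A_{2,2}(2)\to A_2''(2)\to A_2''(2)/A_{2,2}(2)\to 0$ with $A_{2,2}(2)\cong\Sym^2 V_2$ and $A_2''(2)/A_{2,2}(2)\cong\repS_{(2,2)}V_2$, which restricted to $\GL(2;\Z)$ is the trivial module; I would split this by exhibiting a $\GL(2;\Z)$-fixed vector in $A_2''(2)$ not lying in $A_{2,2}(2)$, through a direct computation with the generator $P''$ and a surjection $F_4\to F_2$.

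The main case is $n\geq 3$. By the established $\GL(n;\Z)$-isomorphisms, $A_2''(n)\supset A_{2,1}(n)\supset A_{2,2}(n)\supset 0$ is a chain of $\Aut(F_n)$-submodules with successive quotients the pairwise non-isomorphic simple $\GL(n;\Z)$-modules $\repS_{(2,2)}V_n$, $\repS_{(1,1,1)}V_n$, $\repS_{(2)}V_n$, hence a composition series; it remains to show it is the full submodule lattice. Given an $\Aut(F_n)$-submodule $M\subseteq A_2''(n)$, I would filter it by $M\cap A_{2,k}(n)$ and pass to $\gr(M)\subseteq\gr(A_2''(n))\cong B_{2,0}''(n)\oplus B_{2,1}(n)\oplus B_{2,2}(n)$. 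Since $M$ is $\Aut(F_n)$-stable, $\gr(M)$ is a $\GL(n;\Z)$-submodule; and since $[m,g]=m\cdot g-m\in M\cap A_{2,k+r}(n)$ for $m\in M\cap A_{2,k}(n)$ and $g\in\Gamma_r(\IA(n))$ by Lemma~\ref{l613}, the subspace $\gr(M)$ is moreover stable under the bracket homomorphisms $\beta_{2,k}^r$ of Corollary~\ref{corgraction}. Now $B_{2,0}''(n)\oplus B_{2,1}(n)\oplus B_{2,2}(n)$ is a multiplicity-free semisimple $\GL(n;\Z)$-module, so $\gr(M)$ is a sub-sum of the three pieces; and since $\rho_1$ and $\rho_2$ are nonzero by Lemma~\ref{prop1031} while $B_{2,1}(n)$ and $B_{2,2}(n)$ are simple, the images of the $\GL(n;\Z)$-equivariant bracket maps $B_{2,0}''(n)\otimes\opegr^1(\IA(n))\to B_{2,1}(n)$ and $B_{2,1}(n)\otimes\opegr^1(\IA(n))\to B_{2,2}(n)$ are all of $B_{2,1}(n)$, resp.\ all of $B_{2,2}(n)$. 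Hence $\gr(M)$ containing $B_{2,0}''(n)$ forces it to contain $B_{2,1}(n)$, and containing $B_{2,1}(n)$ forces it to contain $B_{2,2}(n)$, so $\gr(M)$ is one of $0$, $B_{2,2}(n)$, $B_{2,1}(n)\oplus B_{2,2}(n)$, or the whole thing. Finally $\gr(M)$ recovers $M$: vanishing of $\gr^0(M)$ (resp.\ of $\gr^0(M)$ and $\gr^1(M)$) places $M$ inside $A_{2,1}(n)$ (resp.\ inside $A_{2,2}(n)$), after which $\dim M=\dim\gr(M)$ pins $M$ down as $0$, $A_{2,2}(n)$, $A_{2,1}(n)$, or $A_2''(n)$. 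Thus the submodule lattice of $A_2''(n)$ is the chain $0\subsetneq A_{2,2}(n)\subsetneq A_{2,1}(n)\subsetneq A_2''(n)$, which is therefore its unique composition series; and since their submodule lattices are chains, $A_2''(n)$ and $A_{2,1}(n)$ are indecomposable.

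I expect the crux to be the non-vanishing of $\rho_1$ and $\rho_2$, i.e.\ that the $\gr(\IA(n))$-action genuinely couples the three graded layers; this is exactly Lemma~\ref{prop1031}, and once it is available the remainder of the $n\geq 3$ case is the formal translation between submodules of the filtered module $A_2''(n)$ and $\beta$-stable $\GL(n;\Z)$-submodules of its associated graded. The one remaining non-formal point, which I expect to require some hands-on work, is the splitting for $n=2$: this does not follow from the graded picture and needs the explicit computation indicated above.
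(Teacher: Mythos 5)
Your proposal is correct and follows essentially the same route as the paper: the crux for $n\geq 3$ is exactly Lemma \ref{prop1031} together with the simplicity and multiplicity-freeness of the three graded pieces, and your reformulation via $\beta$-stable $\GL(n;\Z)$-submodules of $\gr(M)$ is just a repackaging of the paper's element-chasing plus Jordan--H\"{o}lder argument. The only step you leave unexecuted is the explicit $n=2$ splitting; the paper carries it out by writing down a basis of the $4$-dimensional space $A_2''(2)$, computing the matrices of Nielsen's generators $U_{1,2},P_{1,2},\sigma$ on that basis, and exhibiting the fixed vector $w=u-u_{1,1}-u_{2,2}$ spanning the trivial complement $W$.
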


  To prove this theorem, we use the following fact due to Nielsen \cite{Nielsengen}. See also \cite{Magnus} for the statement.
  Define $U_{1,2}, P_{1,2}, \sigma \in \Aut(F_2)$ by
  \begin{gather*}
    U_{1,2}(x_1)=x_1x_2,\quad U_{1,2}(x_2)=x_2,\\
    P_{1,2}(x_1)=x_2,\quad P_{1,2}(x_2)=x_1,\\
    \sigma(x_1)=x_1^{-1},\quad \sigma(x_2)=x_2.
  \end{gather*}
  \begin{theorem}[Nielsen \cite{Nielsengen}]\label{Nielsen}
   The automorphism group $\Aut(F_2)$ is generated by $U_{1,2},P_{1,2}$ and $\sigma$.
  \end{theorem}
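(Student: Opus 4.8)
The plan is to exploit the short exact sequence
$$1 \to \Inn(F_2) \to \Aut(F_2) \xrightarrow{\ab} \GL(2;\Z) \to 1,$$
whose kernel is $\IA(2)=\Inn(F_2)$ as recorded just before Corollary \ref{IA2}. Writing $G := \langle U_{1,2}, P_{1,2}, \sigma\rangle \subseteq \Aut(F_2)$, I would establish $G=\Aut(F_2)$ in two stages: first that $G$ surjects onto $\GL(2;\Z)$ under abelianization, and second that $\Inn(F_2)\subseteq G$. Granting both, any $\phi\in\Aut(F_2)$ has the same image in $\GL(2;\Z)$ as some $g\in G$, whence $\phi g^{-1}\in\Inn(F_2)\subseteq G$, and therefore $\phi\in G$.

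For the first stage I would compute the images of the generators under $\ab$, using $\bar x_1,\bar x_2$ as an ordered basis of $\Z^2$: the automorphisms $U_{1,2},P_{1,2},\sigma$ map respectively to $\begin{pmatrix}1&0\\1&1\end{pmatrix}$, $\begin{pmatrix}0&1\\1&0\end{pmatrix}$ and $\begin{pmatrix}-1&0\\0&1\end{pmatrix}$. Conjugating the first by the permutation matrix yields $\begin{pmatrix}1&1\\0&1\end{pmatrix}$, and the two elementary matrices $\begin{pmatrix}1&1\\0&1\end{pmatrix},\begin{pmatrix}1&0\\1&1\end{pmatrix}$ generate $\mathrm{SL}(2;\Z)$ via the Euclidean algorithm. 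Since the image of $\sigma$ has determinant $-1$, these three images together generate all of $\GL(2;\Z)$, so $\ab(G)=\GL(2;\Z)$.

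For the second stage I would first observe that $G$ contains every elementary Nielsen transformation. The swap $P_{1,2}$ and the inversions $\sigma$, $\sigma':=P_{1,2}\sigma P_{1,2}$ lie in $G$ by definition, and the four right and left transvections are obtained from $U_{1,2}\colon x_1\mapsto x_1x_2$ by conjugating with $P_{1,2},\sigma,\sigma'$; for instance $P_{1,2}U_{1,2}P_{1,2}\colon x_2\mapsto x_2x_1$ and $\sigma'\,(\sigma U_{1,2}\sigma)\,\sigma'\colon x_1\mapsto x_2x_1$. In particular $L\colon x_2\mapsto x_1x_2$ and $R^{-}:=\sigma(P_{1,2}U_{1,2}P_{1,2})\sigma\colon x_2\mapsto x_2x_1^{-1}$ (both fixing $x_1$) lie in $G$, and a direct evaluation on $x_1,x_2$ gives $L\circ R^{-}\colon x_1\mapsto x_1,\ x_2\mapsto x_1x_2x_1^{-1}$, i.e. $L\circ R^{-}=\sigma_{x_1}$, the inner automorphism by $x_1$. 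Since $P_{1,2}\sigma_{x_1}P_{1,2}=\sigma_{x_2}$ and $\Inn(F_2)$ is generated by $\sigma_{x_1},\sigma_{x_2}$, this yields $\Inn(F_2)\subseteq G$, completing the argument.

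The only genuinely computational point is the handedness-and-sign bookkeeping in the second stage — checking that the specific composite of transvections realizes $\sigma_{x_1}$ exactly — which I would carry out by explicit evaluation on the generators; I expect no conceptual obstacle, since both inputs (generation of $\GL(2;\Z)$ by elementary and sign matrices, and the identity $\IA(2)=\Inn(F_2)$) are already in hand. As a self-contained alternative that does not quote $\IA(2)=\Inn(F_2)$, one could run Nielsen reduction: an automorphism $\phi$ corresponds to a basis $(\phi(x_1),\phi(x_2))$ of $F_2$, and one proves that whenever this pair fails to be Nielsen reduced some elementary Nielsen transformation strictly decreases $|\phi(x_1)|+|\phi(x_2)|$, so that after finitely many steps one reaches a permutation of $x_1^{\pm1},x_2^{\pm1}$; as every elementary transformation lies in $G$ by the start of the second stage, this again forces $G=\Aut(F_2)$. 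The main obstacle in that route is proving the length-reduction lemma, which the exact-sequence argument avoids entirely.
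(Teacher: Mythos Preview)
The paper does not supply a proof of this theorem: it is quoted as a classical result of Nielsen with a citation, so there is no in-paper argument to compare against.

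Your main argument is correct as written, but it carries a circularity risk you should flag. The identity $\IA(2)=\Inn(F_2)$ that you invoke is obtained in the paper as a corollary of the Nielsen--Magnus Theorem~\ref{f611}, and the standard proofs of that theorem (certainly Magnus's for general $n$, and Nielsen's presentation-based arguments) already rely on knowing a generating set for $\Aut(F_n)$ --- precisely the statement you are trying to establish. So while every individual step (the images in $\GL(2;\Z)$, the construction of $L$ and $R^{-}$ in $G$, and the verification $L\circ R^{-}=\sigma_{x_1}$) is correct, the short-exact-sequence strategy is not self-contained unless you supply an independent proof of $\IA(2)=\Inn(F_2)$.

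Your proposed alternative via Nielsen reduction avoids this entirely and is essentially Nielsen's original method: one shows that any non-reduced basis of $F_2$ admits an elementary Nielsen move strictly decreasing total length, terminating at $\{x_1^{\pm1},x_2^{\pm1}\}$. Since you have already checked that all elementary Nielsen transformations lie in $G$, this route gives a clean self-contained proof; the length-reduction lemma is the substantive point, but for rank $2$ it is a short cancellation argument.
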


  \begin{proof}[Proof of Theorem \ref{th1042}]
   For $n=1$, $A_2''(1)=A_{2,2}(1)$ has a basis  $\{\centre{
\begingroup%
  \makeatletter%
  \providecommand\color[2][]{%
    \errmessage{(Inkscape) Color is used for the text in Inkscape, but the package 'color.sty' is not loaded}%
    \renewcommand\color[2][]{}%
  }%
  \providecommand\transparent[1]{%
    \errmessage{(Inkscape) Transparency is used (non-zero) for the text in Inkscape, but the package 'transparent.sty' is not loaded}%
    \renewcommand\transparent[1]{}%
  }%
  \providecommand\rotatebox[2]{#2}%
  \newcommand*\fsize{\dimexpr\f@size pt\relax}%
  \newcommand*\lineheight[1]{\fontsize{\fsize}{#1\fsize}\selectfont}%
  \ifx\svgwidth\undefined%
    \setlength{\unitlength}{21.76448206bp}%
    \ifx\svgscale\undefined%
      \relax%
    \else%
      \setlength{\unitlength}{\unitlength * \real{\svgscale}}%
    \fi%
  \else%
    \setlength{\unitlength}{\svgwidth}%
  \fi%
  \global\let\svgwidth\undefined%
  \global\let\svgscale\undefined%
  \makeatother%
  \begin{picture}(1,1.99154066)%
    \lineheight{1}%
    \setlength\tabcolsep{0pt}%
    \put(0,0){\includegraphics[width=\unitlength,page=1]{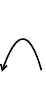}}%
    \put(0.38566051,0.03239579){\makebox(0,0)[lt]{\lineheight{1.45000005}\smash{\begin{tabular}[t]{l}$1$\end{tabular}}}}%
    \put(0,0){\includegraphics[width=\unitlength,page=2]{a221.pdf}}%
  \end{picture}%
\endgroup%
}\}$.
   We can check that the action of $\Aut(F_1)\cong \Z/2\Z$ on $A_2''(1)$ is trivial.

   For $n=2$, let
   \begin{gather*}
     u=2\;\centre{
\begingroup%
  \makeatletter%
  \providecommand\color[2][]{%
    \errmessage{(Inkscape) Color is used for the text in Inkscape, but the package 'color.sty' is not loaded}%
    \renewcommand\color[2][]{}%
  }%
  \providecommand\transparent[1]{%
    \errmessage{(Inkscape) Transparency is used (non-zero) for the text in Inkscape, but the package 'transparent.sty' is not loaded}%
    \renewcommand\transparent[1]{}%
  }%
  \providecommand\rotatebox[2]{#2}%
  \newcommand*\fsize{\dimexpr\f@size pt\relax}%
  \newcommand*\lineheight[1]{\fontsize{\fsize}{#1\fsize}\selectfont}%
  \ifx\svgwidth\undefined%
    \setlength{\unitlength}{50.22484093bp}%
    \ifx\svgscale\undefined%
      \relax%
    \else%
      \setlength{\unitlength}{\unitlength * \real{\svgscale}}%
    \fi%
  \else%
    \setlength{\unitlength}{\svgwidth}%
  \fi%
  \global\let\svgwidth\undefined%
  \global\let\svgscale\undefined%
  \makeatother%
  \begin{picture}(1,0.72208727)%
    \lineheight{1}%
    \setlength\tabcolsep{0pt}%
    \put(0,0){\includegraphics[width=\unitlength,page=1]{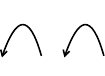}}%
    \put(0.15898375,0.01403842){\makebox(0,0)[lt]{\lineheight{1.45000005}\smash{\begin{tabular}[t]{l}$1$\end{tabular}}}}%
    \put(0.76196547,0.01721687){\makebox(0,0)[lt]{\lineheight{1.45000005}\smash{\begin{tabular}[t]{l}$2$\end{tabular}}}}%
    \put(0,0){\includegraphics[width=\unitlength,page=2]{A201122.pdf}}%
  \end{picture}%
\endgroup%
}-\centre{
\begingroup%
  \makeatletter%
  \providecommand\color[2][]{%
    \errmessage{(Inkscape) Color is used for the text in Inkscape, but the package 'color.sty' is not loaded}%
    \renewcommand\color[2][]{}%
  }%
  \providecommand\transparent[1]{%
    \errmessage{(Inkscape) Transparency is used (non-zero) for the text in Inkscape, but the package 'transparent.sty' is not loaded}%
    \renewcommand\transparent[1]{}%
  }%
  \providecommand\rotatebox[2]{#2}%
  \newcommand*\fsize{\dimexpr\f@size pt\relax}%
  \newcommand*\lineheight[1]{\fontsize{\fsize}{#1\fsize}\selectfont}%
  \ifx\svgwidth\undefined%
    \setlength{\unitlength}{50.22484093bp}%
    \ifx\svgscale\undefined%
      \relax%
    \else%
      \setlength{\unitlength}{\unitlength * \real{\svgscale}}%
    \fi%
  \else%
    \setlength{\unitlength}{\svgwidth}%
  \fi%
  \global\let\svgwidth\undefined%
  \global\let\svgscale\undefined%
  \makeatother%
  \begin{picture}(1,0.80944125)%
    \lineheight{1}%
    \setlength\tabcolsep{0pt}%
    \put(0,0){\includegraphics[width=\unitlength,page=1]{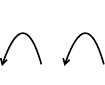}}%
    \put(0.15606783,0.01983479){\makebox(0,0)[lt]{\lineheight{1.45000005}\smash{\begin{tabular}[t]{l}$1$\end{tabular}}}}%
    \put(0.76984786,0.01403842){\makebox(0,0)[lt]{\lineheight{1.45000005}\smash{\begin{tabular}[t]{l}$2$\end{tabular}}}}%
    \put(0,0){\includegraphics[width=\unitlength,page=2]{A201212.pdf}}%
  \end{picture}%
\endgroup%
}-\centre{
\begingroup%
  \makeatletter%
  \providecommand\color[2][]{%
    \errmessage{(Inkscape) Color is used for the text in Inkscape, but the package 'color.sty' is not loaded}%
    \renewcommand\color[2][]{}%
  }%
  \providecommand\transparent[1]{%
    \errmessage{(Inkscape) Transparency is used (non-zero) for the text in Inkscape, but the package 'transparent.sty' is not loaded}%
    \renewcommand\transparent[1]{}%
  }%
  \providecommand\rotatebox[2]{#2}%
  \newcommand*\fsize{\dimexpr\f@size pt\relax}%
  \newcommand*\lineheight[1]{\fontsize{\fsize}{#1\fsize}\selectfont}%
  \ifx\svgwidth\undefined%
    \setlength{\unitlength}{50.22484093bp}%
    \ifx\svgscale\undefined%
      \relax%
    \else%
      \setlength{\unitlength}{\unitlength * \real{\svgscale}}%
    \fi%
  \else%
    \setlength{\unitlength}{\svgwidth}%
  \fi%
  \global\let\svgwidth\undefined%
  \global\let\svgscale\undefined%
  \makeatother%
  \begin{picture}(1,0.74897786)%
    \lineheight{1}%
    \setlength\tabcolsep{0pt}%
    \put(0,0){\includegraphics[width=\unitlength,page=1]{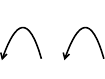}}%
    \put(0.1676442,0.01425275){\makebox(0,0)[lt]{\lineheight{1.45000005}\smash{\begin{tabular}[t]{l}$1$\end{tabular}}}}%
    \put(0.76723317,0.01403842){\makebox(0,0)[lt]{\lineheight{1.45000005}\smash{\begin{tabular}[t]{l}$2$\end{tabular}}}}%
    \put(0,0){\includegraphics[width=\unitlength,page=2]{A2012122.pdf}}%
  \end{picture}%
\endgroup%
}\in A_2''(2)\setminus A_{2,2}(2),\\
     u_{1,1}=\frac{1}{2}\;\centre{
\begingroup%
  \makeatletter%
  \providecommand\color[2][]{%
    \errmessage{(Inkscape) Color is used for the text in Inkscape, but the package 'color.sty' is not loaded}%
    \renewcommand\color[2][]{}%
  }%
  \providecommand\transparent[1]{%
    \errmessage{(Inkscape) Transparency is used (non-zero) for the text in Inkscape, but the package 'transparent.sty' is not loaded}%
    \renewcommand\transparent[1]{}%
  }%
  \providecommand\rotatebox[2]{#2}%
  \newcommand*\fsize{\dimexpr\f@size pt\relax}%
  \newcommand*\lineheight[1]{\fontsize{\fsize}{#1\fsize}\selectfont}%
  \ifx\svgwidth\undefined%
    \setlength{\unitlength}{46.47484282bp}%
    \ifx\svgscale\undefined%
      \relax%
    \else%
      \setlength{\unitlength}{\unitlength * \real{\svgscale}}%
    \fi%
  \else%
    \setlength{\unitlength}{\svgwidth}%
  \fi%
  \global\let\svgwidth\undefined%
  \global\let\svgscale\undefined%
  \makeatother%
  \begin{picture}(1,0.93535461)%
    \lineheight{1}%
    \setlength\tabcolsep{0pt}%
    \put(0,0){\includegraphics[width=\unitlength,page=1]{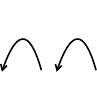}}%
    \put(0.18060742,0.01787386){\makebox(0,0)[lt]{\lineheight{1.45000005}\smash{\begin{tabular}[t]{l}$1$\end{tabular}}}}%
    \put(0.74895116,0.01517117){\makebox(0,0)[lt]{\lineheight{1.45000005}\smash{\begin{tabular}[t]{l}$2$\end{tabular}}}}%
    \put(0,0){\includegraphics[width=\unitlength,page=2]{A2211.pdf}}%
  \end{picture}%
\endgroup%
},\; u_{1,2}=\centre{},\; u_{2,2}=\frac{1}{2}\;\centre{
\begingroup%
  \makeatletter%
  \providecommand\color[2][]{%
    \errmessage{(Inkscape) Color is used for the text in Inkscape, but the package 'color.sty' is not loaded}%
    \renewcommand\color[2][]{}%
  }%
  \providecommand\transparent[1]{%
    \errmessage{(Inkscape) Transparency is used (non-zero) for the text in Inkscape, but the package 'transparent.sty' is not loaded}%
    \renewcommand\transparent[1]{}%
  }%
  \providecommand\rotatebox[2]{#2}%
  \newcommand*\fsize{\dimexpr\f@size pt\relax}%
  \newcommand*\lineheight[1]{\fontsize{\fsize}{#1\fsize}\selectfont}%
  \ifx\svgwidth\undefined%
    \setlength{\unitlength}{48.01447908bp}%
    \ifx\svgscale\undefined%
      \relax%
    \else%
      \setlength{\unitlength}{\unitlength * \real{\svgscale}}%
    \fi%
  \else%
    \setlength{\unitlength}{\svgwidth}%
  \fi%
  \global\let\svgwidth\undefined%
  \global\let\svgscale\undefined%
  \makeatother%
  \begin{picture}(1,0.90536151)%
    \lineheight{1}%
    \setlength\tabcolsep{0pt}%
    \put(0,0){\includegraphics[width=\unitlength,page=1]{u22.pdf}}%
    \put(0.72152604,0.01730072){\makebox(0,0)[lt]{\lineheight{1.45000005}\smash{\begin{tabular}[t]{l}$2$\end{tabular}}}}%
    \put(0.1782251,0.01468469){\makebox(0,0)[lt]{\lineheight{1.45000005}\smash{\begin{tabular}[t]{l}$1$\end{tabular}}}}%
    \put(0,0){\includegraphics[width=\unitlength,page=2]{u22.pdf}}%
  \end{picture}%
\endgroup%
}\in A_{2,2}(2).
   \end{gather*}
   It is easily checked that $\{u,u_{1,1},u_{1,2},u_{2,2}\}$ is a basis for $A_2''(2)$ and that the representation matrices of $U_{1,2}, P_{1,2}$ and $\sigma$ for this basis are
   \begin{gather*}
     U_{1,2}=
     \begin{pmatrix}
       1&1&1&0\\
       0&1&0&0\\
       0&2&1&0\\
       0&1&1&1\\
     \end{pmatrix},\quad
     P_{1,2}=
     \begin{pmatrix}
       1&0&0&0\\
       0&0&0&1\\
       0&0&1&0\\
       0&1&0&0\\
     \end{pmatrix},\quad
     \sigma=
     \begin{pmatrix}
       1&0&0&0\\
       0&1&0&0\\
       0&0&-1&0\\
       0&0&0&1\\
     \end{pmatrix}.
   \end{gather*}
   Let $w=(1,-1,0,-1)=u-u_{1,1}-u_{2,2}\in A_2''(2)\setminus A_{2,2}(2)$ and $W=\K w$.
   We have
   $$w\cdot U_{1,2}=w\cdot P_{1,2}=w\cdot \sigma=w.$$
   Thus, by Theorem \ref{Nielsen}, it follows that the $\Aut(F_2)$-action on $W$ is trivial.
   Therefore, we have an irreducible decomposition
   $$A_2''(2)=W\oplus A_{2,2}(2).$$

   For $n\geq 3$, since
   $$ A_2''(n)/A_{2,1}(n)\cong B_{2,0}''(n)\cong \repS_{(2,2)}V_n,\quad A_{2,1}(n)/ A_{2,2}(n)\cong B_{2,1}(n)\cong \repS_{(1,1,1)}V_n$$
   are simple $\Aut(F_n)$-modules, we have a composition series of length $3$
   $$A_2''(n)\supsetneq A_{2,1}(n)\supsetneq A_{2,2}(n)\supsetneq 0.$$

   We next prove that $A_{2,1}(n)$ does not have any nonzero proper submodules other than $A_{2,2}(n)$. (Then, it follows that $A_{2,1}(n)$ is indecomposable.)
   Let $A$ be a nonzero submodule of $A_{2,1}(n)$ other than $A_{2,2}(n)$.
   Since $A_{2,2}(n)$ is simple, there is an element $a\in A\setminus A_{2,2}(n)$.
   We have $\theta_{2,n}(\gr(A_{2,1}(n)))=B_{2,1}(n)\oplus B_{2,2}(n)$, so we can write $a$ as $a=u+v$, for some elements $u\neq 0\in \theta_{2,n}^{-1}(B_{2,1}(n))$, $v\in\theta_{2,n}^{-1}(B_{2,2}(n))=A_{2,2}(n)$.
   By Lemma \ref{prop1031}, there is $g\in\IA(n)$ such that $[u,g]\neq 0\in A_{2,2}(n)$.
   Therefore, we have
   $$[a,g]=[u+v,g]=[u,g]+[v,g]=[u,g]\neq 0\in A_{2,2}(n).$$
   Since $A_{2,2}(n)$ is simple, we have $A_{2,2}(n)\subsetneq A$.
   Since $A_{2,1}(n)$ has a composition series of length $2$, by the Jordan--H\"{o}lder theorem, we have $A=A_{2,1}(n)$.

   We now prove that $A_2''(n)$ does not have any nonzero proper submodules other than $A_{2,1}(n)$ and $A_{2,2}(n)$. (Then, it follows that $A_2''(n)$ is indecomposable.)
   Let $A$ be a nonzero submodule of $A_2''(n)$ other than $A_{2,1}(n),A_{2,2}(n)$.
   Since $A_{2,2}(n)$ is the only nonzero proper submodule of $A_{2,1}(n)$, we have $A\nsubseteq A_{2,1}(n)$.
   Thus, there is an element $a\in A\setminus A_{2,1}(n)$.
   Since we have $\theta_{2,n}(\gr(A_2''(n)))=B_{2,0}''(n)\oplus B_{2,1}(n)\oplus B_{2,2}(n)$, we can write $a$ as $a=u+v$ for some elements $u\neq 0\in\theta_{2,n}^{-1}(B_{2,0}''(n))$, $v\in \theta_{2,n}^{-1}(B_{2,1}(n)\oplus B_{2,2}(n))=A_{2,1}(n)$.
   By Lemma \ref{prop1031}, there is $g\in \IA(n)$ such that $[u,g]\in A_{2,1}(n)\setminus A_{2,2}(n)$.
   Therefore, we have
   $$[a,g]=[u+v,g]=[u,g]+[v,g]\in A_{2,1}(n)\setminus A_{2,2}(n),$$
   because $[v,g]\in A_{2,2}(n)$.
   Since $A_{2,2}(n)$ is the only nonzero proper submodule of $A_{2,1}(n)$, we have $A\cap A_{2,1}(n)=A_{2,1}(n)$ and therefore, $A_{2,1}(n)\subsetneq A$.
   Since $A_2''(n)$ has a composition series of length $3$, by the Jordan--H\"{o}lder theorem, we have $A=A_2''(n)$.
  \end{proof}

  \begin{corollary}\label{corAut}
    There are exact sequences of $\Aut(F_n)$-modules
    \begin{align*}
      0\rightarrow \repS_{(1,1,1)}V_n\rightarrow &A_2''(n)/A_{2,2}(n)\rightarrow \repS_{(2,2)}V_n\rightarrow 0,\\
      0\rightarrow \repS_{(2)}V_n\rightarrow &A_{2,1}(n)\rightarrow \repS_{(1,1,1)}V_n\rightarrow 0,
    \end{align*}
    which do not split for $n\geq 3$.
    Thus, we have
    $$\Ext_{\K\Aut(F_n)}^{1}(\repS_{(2,2)}V_n,\repS_{(1,1,1)}V_n)\neq 0,$$
    $$\Ext_{\K\Aut(F_n)}^{1}(\repS_{(1,1,1)}V_n, \repS_{(2)}V_n)\neq 0$$
    for $n\geq 3$.
  \end{corollary}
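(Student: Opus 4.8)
The plan is to read everything off from Theorem~\ref{th1042} and its proof, where the essential work has already been done; what remains is purely formal. Fix $n\geq 3$. First I would record the identifications of the successive subquotients of the chain $A_2''(n)\supsetneq A_{2,1}(n)\supsetneq A_{2,2}(n)\supsetneq 0$ (which exists by Proposition~\ref{prop1021}) with Schur modules. Combining $\theta_{2,n}(\gr(A_2''(n)))=B_{2,0}''(n)\oplus B_{2,1}(n)\oplus B_{2,2}(n)$ with Proposition~\ref{prop531} and the fact that the $\IA(n)$-action on each associated graded piece is trivial (Lemma~\ref{l611}), one obtains isomorphisms of $\Aut(F_n)$-modules
\[
  A_2''(n)/A_{2,1}(n)\cong \repS_{(2,2)}V_n,\qquad
  A_{2,1}(n)/A_{2,2}(n)\cong \repS_{(1,1,1)}V_n,\qquad
  A_{2,2}(n)\cong \repS_{(2)}V_n,
\]
all three of which are nonzero for $n\geq 3$. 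Quotienting the three-step chain by $A_{2,2}(n)$, and then by the image of $A_{2,1}(n)$, produces the first short exact sequence, while the inclusion $A_{2,2}(n)\hookrightarrow A_{2,1}(n)$ with cokernel $A_{2,1}(n)/A_{2,2}(n)$ is the second.

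Next I would verify that neither sequence splits. For the second this is immediate: Theorem~\ref{th1042} asserts that $A_{2,1}(n)$ is indecomposable, whereas a split extension $0\to \repS_{(2)}V_n\to A_{2,1}(n)\to \repS_{(1,1,1)}V_n\to 0$ would exhibit $A_{2,1}(n)$ as a direct sum of two nonzero modules. For the first sequence I would first argue that $A_2''(n)/A_{2,2}(n)$ is itself indecomposable: by the correspondence theorem its nonzero proper submodules are the images of the submodules of $A_2''(n)$ strictly between $A_{2,2}(n)$ and $A_2''(n)$, and by Theorem~\ref{th1042} the only such submodule is $A_{2,1}(n)$; hence $A_2''(n)/A_{2,2}(n)$ has exactly one nonzero proper submodule and so cannot be a nontrivial direct sum. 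Therefore the first sequence is non-split as well. Finally, a non-split short exact sequence $0\to X\to M\to Y\to 0$ represents a nonzero class in $\Ext^1_{\K\Aut(F_n)}(Y,X)$, which gives the two claimed non-vanishing statements.

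There is essentially no technical obstacle: the entire substance of the corollary is contained in Theorem~\ref{th1042}. The only point demanding a little care — and it is routine — is the bookkeeping in the first step, namely checking that the $\Aut(F_n)$-action on each subquotient factors through the $\GL(n;\Z)$-action under which the identifications with $\repS_{(2,2)}V_n$, $\repS_{(1,1,1)}V_n$ and $\repS_{(2)}V_n$ were obtained; this is precisely the triviality of the $\IA(n)$-action on the associated graded (Lemma~\ref{l611}). Everything else is a direct application of the Jordan--H\"older and correspondence theorems, so the proof is short.
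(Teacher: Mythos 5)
Your proposal is correct and follows exactly the route the paper intends: the corollary is stated as an immediate consequence of Theorem \ref{th1042} (the paper gives no separate proof), with the exact sequences coming from the composition series and the non-splitting from the fact that $A_{2,1}(n)$ and $A_2''(n)/A_{2,2}(n)$ have too few submodules to decompose. Your explicit use of the correspondence theorem to show $A_2''(n)/A_{2,2}(n)$ is indecomposable is exactly the right way to fill in the one step the paper leaves implicit.
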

  \begin{remark}
    Corollary \ref{corAut} also holds as $\Out(F_n)$-modules.
    In the context of outer functors, the latter fact of Corollary \ref{corAut} about extensions corresponds to some specific cases of Corollary 19.15 in \cite{POWELLVESPA}.
  \end{remark}
  \begin{remark}
    By Theorem \ref{out}, the $\Aut(F_n)$-module $A_{2,1}(n)$ can be considered as an $\Out(F_n)$-module.
    There is no $\Out(F_3)$-modules of dimension less than $7$ which do not factor through the canonical surjection $\Out(F_3)\twoheadrightarrow \GL(3;\Z)$ \cite{Kielak}.
    Turchin and Willwacher \cite{TW} constructed the first $7$-dimensional $\Out(F_3)$-module $U_3^{I}$ with such property.
    We can check that the $\Out(F_3)$-module $A_{2,1}(3)$ is isomorphic to $U_3^{I}$.
    We have another $7$-dimensional $\Out(F_3)$-module $A_2''(3)/A_{2,2}(3)$ which does not factor through $\GL(3;\Z)$.
    At the level of the associated graded $\GL(3;\Z)$-module, $$\gr(A_2''(3)/A_{2,2}(3))\cong \repS_{(2,2)}V_3\oplus \repS_{(1,1,1)}V_3\cong (\repS_{(2)}V_3)^{\ast}\oplus (\repS_{(1,1,1)}V_3)^{\ast}\cong \gr(A_{2,1}(3))^{\ast}.$$
    We conjecture that the $\Out(F_3)$-module $A_2''(3)/A_{2,2}(3)$ is isomorphic to the dual of $A_{2,1}(3)$ and that $A_2''(3)$ is self-dual.
  \end{remark}

 \subsection{Indecomposable decomposition of $A_2$}\label{ss65}
  Finally, we show that the subfunctors $A_2'$ and $A_2''$ of $A_2$, which we observed in Section \ref{ss62}, are indecomposable.

  \begin{theorem}\label{th1041}
   The direct decomposition $A_2=A_2'\oplus A_2''$ in Proposition \ref{prop1021} is indecomposable in the functor category $\fVect^{\F^\op}$.
  \end{theorem}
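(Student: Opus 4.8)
The plan is to deduce the functor-level statement directly from the $\Aut(F_n)$-module computation of Theorem~\ref{th1042}. Since an object of an additive category is indecomposable exactly when it is nonzero and admits no nontrivial direct decomposition, and since $\fVect^{\F^\op}$ is additive with objectwise biproducts, it suffices to prove that each of the subfunctors $A_2'$ and $A_2''$ of $A_2$ is nonzero and indecomposable. Nonvanishing is immediate, e.g.\ from $\theta_{2,n}(\gr(A_2'(n)))=B_{2,0}'(n)\neq 0$ for $n\geq 1$ together with the analogous equality for $A_2''$, or directly from Theorem~\ref{th1042}.

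The structural input I would isolate is that $A_2'$ and $A_2''$ are \emph{cyclic} functors. By the definitions in Section~\ref{ss62}, $A_2'(n)=\Span_\K\{A_2(f)(P'):f\in\F^\op(4,n)\}$ and $A_2''(n)=\Span_\K\{A_2(f)(P''):f\in\F^\op(4,n)\}$; so if $X$ denotes either functor and $P\in X(4)$ its generator, and if $G\subset X$ is a subfunctor with $P\in G(4)$, then for every $f\in\F^\op(4,n)$ the structure map $X(f)$ carries $G(4)$ into $G(n)$, whence $A_2(f)(P)=X(f)(P)\in G(n)$ and therefore $X(n)\subset G(n)\subset X(n)$ for all $n$; that is, $G=X$. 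Equivalently, a proper subfunctor of $X$ cannot contain the generator $P$.

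With this in hand the argument is short. Suppose $A_2'=G\oplus H$ in $\fVect^{\F^\op}$. Evaluating at $n=4$ gives $A_2'(4)=G(4)\oplus H(4)$, and the two projections are components of natural transformations, hence $\Aut(F_4)$-equivariant, so this is a direct sum decomposition of $\Aut(F_4)$-modules. By Theorem~\ref{th1042} the $\Aut(F_4)$-module $A_2'(4)$ is nonzero and simple, so one summand vanishes, say $H(4)=0$; then $P'\in A_2'(4)=G(4)$, and by cyclicity $G=A_2'$ and $H=0$. The same scheme handles $A_2''$: a decomposition $A_2''=G\oplus H$ restricts at $n=4$ to a direct sum decomposition $A_2''(4)=G(4)\oplus H(4)$ of $\Aut(F_4)$-modules, and by Theorem~\ref{th1042} (case $n=4\geq 3$, where $A_2''(n)$ admits a unique composition series of length $3$ and is therefore indecomposable and nonzero) one summand vanishes; cyclicity again makes the splitting trivial. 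Hence both $A_2'$ and $A_2''$ are indecomposable, and so is the decomposition $A_2=A_2'\oplus A_2''$.

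I do not expect a genuine obstacle here: all the substance lies in Theorem~\ref{th1042} --- itself powered by the $\gr(\IA(n))$-computations of Section~\ref{ss63} --- and the present statement is a formal consequence. The only points that need (minor) care are that a direct sum decomposition of a functor in $\fVect^{\F^\op}$ restricts objectwise to an $\Aut(F_4)$-equivariant direct sum, and that the values of any subfunctor of $A_2$ are automatically stable under the $\End(F_n)^\op$-action, hence under $\Aut(F_n)$, so that Theorem~\ref{th1042} applies to the restricted decomposition.
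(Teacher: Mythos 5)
Your proof is correct and follows essentially the same route as the paper: all the substance is imported from Theorem \ref{th1042}, and the functor-level statement is deduced formally from the objectwise indecomposability. The only (cosmetic) difference is in how you propagate from one value of $n$ to all of them --- you evaluate at $n=4$ and use that $A_2'$ and $A_2''$ are generated by $P'$ and $P''$ as subfunctors, whereas the paper evaluates at all $n\geq 3$ and handles $n\leq 2$ via the subfunctor property; your cyclicity argument is a clean and fully valid way to make that last step explicit.
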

  \begin{proof}
   Since $A_2'(n)$ are simple $\Aut(F_n)$-modules for any $n\geq 1$ by Theorem \ref{th1042}, the functor $A_2'$ is indecomposable in $\fVect^{\F^\op}$.

   Suppose that we have a direct decomposition
   $$A_2''=G\oplus G'\in \fVect^{\F^\op},$$
   where $G'$ is possibly $0$.
   By Theorem \ref{th1042}, the $\Aut(F_n)$-modules $A_2''(n)$ are indecomposable for $n\geq 3$, we have
   \begin{equation}\label{eqB221}
      G(n)=A_2''(n),\quad G'(n)=0\quad (n\geq 3).
   \end{equation}
   Since $G$ and $G'$ are subfunctors of $A_2$, (\ref{eqB221}) holds for any $n\geq 0$.
   Therefore, we have
   $$G=A_2'',\quad G'=0.$$
   This implies that the functor $A_2''$ is indecomposable in $\fVect^{\F^\op}$.
  \end{proof}


\section{Polynomiality of the functor $A_d$}\label{s7}
In Section \ref{81}, we show that the functor $A_d$ is a polynomial functor of degree $2d$ as the author was informed by Christine Vespa. Moreover, she informed the author that the filtration of $A_d(n)$ corresponds to the polynomial filtration of the polynomial functor $A_d$. (The reader can consult \cite{POWELLVESPA} for the definition of polynomial filtrations.)

In Section \ref{82}, we give some remarks about a polynomial functor $U_d$ associated to a Casimir Lie algebra and a weight system natural transformation from $A_d$ to $U_d$.

\subsection{The polynomial functor $A_d$}\label{81}
We recall the definition of polynomial functors (see Section 2 of \cite{HPV}).
Let $\catC$ be a \emph{pointed monoidal category}, that is, a monoidal category $(\catC,\otimes,0)$ with a null object $0$ as the monoidal unit.
For $X_1,\cdots,X_n\in \catC$, 
let $$r^n_{\hat{k}}:X_1\otimes \cdots\otimes X_n\to X_1\otimes\cdots \hat{X_k}\cdots \otimes X_n$$
be the composition 
$$X_1\otimes\cdots\otimes X_k\otimes\cdots\otimes X_n\to X_1\otimes \cdots \otimes 0\otimes\cdots \otimes X_n\xrightarrow{\cong} X_1\otimes \cdots \hat{X_k} \cdots\otimes X_n,$$
where the first map is determined by the unique morphism $X_k\to 0$.
Let $\catD$ be an additive category.
A functor $F:\catC\to\catD$ is a \emph{polynomial functor} of degree $\leq n$ if 
$$\hat{r}^{F}=(F(r^{n+1}_{\hat{1}}),\cdots,F(r^{n+1}_{\hat{n+1}}))^{t}:F(X_1\otimes\cdots\otimes X_{n+1})\to \bigoplus_{k=1}^{n+1}F(X_1\otimes\cdots\hat{X_k}\cdots\otimes X_{n+1})$$
is monic for any $X_1,\cdots,X_{n+1}\in \catC$.
Note that if $\catD$ is an abelian category, then the above definition of polynomial functors coincides with the definition in \cite{HPV}.
Since the category $\fVect$ is an additive category but not an abelian category, we need this generalized definition of polynomial functors.
\begin{proposition}\label{poly}
 The functor $A_d:\F^{\op}\to \fVect$ is a polynomial functor of degree $2d$.
\end{proposition}
\begin{proof}
For a Jacobi diagram $D\in A_d(n)$, define the \emph{support} $\mathrm{supp}(D)\subset[n]$ of $D$ to be the set of $i\in[n]$ such that at least one of the univalent vertices of $D$ is attached to the $i$-th component of $X_n$.
For $S\subset[n]$, let $A_d(n)_S$ denote the subspace of $A_d(n)$ spanned by the diagrams with support $S$.
Since the three terms in an STU relation have the same support, we have
\begin{gather*}
A_d(n)=\bigoplus_{S\subset[n]}A_d(n)_S.
\end{gather*}

To prove that the functor $A_d$ is a polynomial functor of degree $2d$,
it suffices to show that for each $S\subset[2d+1]$, there is $k\in [2d+1]$ such that $A_d(r^{2d+1}_{\hat{k}}):A_d(2d+1)_S\to A_d(2d)$ is injective.
Since any support $S\subset [2d+1]$ of an element of $A_d(2d+1)$ has at most $2d$ elements, we can choose $k\in [2d+1]\setminus S$.
For the morphism $r^{2d+1}_{\hat{k}}: 2d+1\to 2d$ in $\F^{\op}$, we have $$A_d(r^{2d+1}_{\hat{k}})=\centre{
\begingroup%
  \makeatletter%
  \providecommand\color[2][]{%
    \errmessage{(Inkscape) Color is used for the text in Inkscape, but the package 'color.sty' is not loaded}%
    \renewcommand\color[2][]{}%
  }%
  \providecommand\transparent[1]{%
    \errmessage{(Inkscape) Transparency is used (non-zero) for the text in Inkscape, but the package 'transparent.sty' is not loaded}%
    \renewcommand\transparent[1]{}%
  }%
  \providecommand\rotatebox[2]{#2}%
  \newcommand*\fsize{\dimexpr\f@size pt\relax}%
  \newcommand*\lineheight[1]{\fontsize{\fsize}{#1\fsize}\selectfont}%
  \ifx\svgwidth\undefined%
    \setlength{\unitlength}{142.87417512bp}%
    \ifx\svgscale\undefined%
      \relax%
    \else%
      \setlength{\unitlength}{\unitlength * \real{\svgscale}}%
    \fi%
  \else%
    \setlength{\unitlength}{\svgwidth}%
  \fi%
  \global\let\svgwidth\undefined%
  \global\let\svgscale\undefined%
  \makeatother%
  \begin{picture}(1,0.29733817)%
    \lineheight{1}%
    \setlength\tabcolsep{0pt}%
    \put(0,0){\includegraphics[width=\unitlength,page=1]{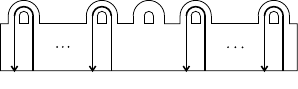}}%
    \put(0.4862152,0.00294516){\color[rgb]{0,0,0}\makebox(0,0)[lt]{\lineheight{1.25}\smash{\begin{tabular}[t]{l}$k$\end{tabular}}}}%
    \put(0.07060026,0.00314572){\color[rgb]{0,0,0}\makebox(0,0)[lt]{\lineheight{1.25}\smash{\begin{tabular}[t]{l}$1$\end{tabular}}}}%
    \put(0.83650838,0.00246748){\color[rgb]{0,0,0}\makebox(0,0)[lt]{\lineheight{1.25}\smash{\begin{tabular}[t]{l}$2d+1$\end{tabular}}}}%
  \end{picture}%
\endgroup%
}\circ -: A_d(2d+1)\to A_d(2d).$$
Since for any element $u\in A_d(2d+1)_S$, the element $A_d(r^{2d+1}_{\hat{k}})(u)\in A_d(2d)$ is obtained from $u$ by taking away the $k$-th arc component of $X_{2d+1}$ , it follows that the map $A_d(r^{2d+1}_{\hat{k}})$ is injective on $A_d(2d+1)_S$.
\end{proof}

\subsection{Polynomial functor $U_d$ and the weight system map}\label{82}
Here we give some remarks about another polynomial functor $U_d$ and weight systems, which relate $A_d$ to $U_d$.

Let $\g$ be a Casimir Lie algebra in the sense of \cite{HM_k}, which is a Lie algebra with an ad-invariant symmetric $2$-tensor $c_\g\in\g\otimes\g$ (for example, a quadratic Lie algebra is a Casimir Lie algebra).
Let $U(\g)$ denote the universal enveloping algebra of $\g$.
We have an increasing filtration $F_{\ast}(U(\g)^{\otimes n})$ of $U(\g)^{\otimes n}$, which is induced by the usual filtration of the tensor algebra of $\g$.

Since $U(\g)$ is a cocommutative Hopf algebra, by \cite{Conant}, $U(\g)^{\otimes n}$ has a left $\Aut(F_n)$-module structure, and the ad-invariant part $(U(\g)^{\otimes n})^{U(\g)}$ of $U(\g)^{\otimes n}$ has a left $\Out(F_n)$-module structure.
Moreover, we can construct a polynomial functor
$$U_d:\F^{\op}\rightarrow \Vect$$
of degree $2d$ that maps $n\in\N$ to $F_{2d}(U(\g)^{\otimes n})^{U(\g)}$, which is also an outer functor.
   
By Section 7.1 of \cite{HM_k}, there is a Hopf algebra $H$ in the category $\A$, and we have a unique linear symmetric monoidal functor $W(\g):\A\rightarrow \mathbf{Mod}_{U(\g)}$ from the category $\A$ to the category $\mathbf{Mod}_{U(\g)}$ of $U(\g)$-modules that sends the pair $(H,c=\centre{})$ to $(U(\g),c_{\g})$.
Thus, we have a linear map $$W_n(\g):A_d(n)\rightarrow F_{2d}(U(\g)^{\otimes n}),$$ which we call the \emph{weight system} of the Casimir Lie algebra $(\g,c_{\g})$.
Considering $A_d(n)$ as a left $\Aut(F_n)$-module via $\Aut(F_n)^{\op}\cong \Aut(F_n)$, which takes an element to its inverse, the weight system $W_n(\g)$ preserves the $\Aut(F_n)$-module structure.
Since a Casimir element is ad-invariant, the weight system takes values in the ad-invariant part $(U(\g)^{\otimes n})^{U(\g)}$ of $U(\g)^{\otimes n}$.
Therefore, the weight system induces an $\Out(F_n)$-module map $$W_n(\g):A_d(n)\rightarrow F_{2d}(U(\g)^{\otimes n})^{U(\g)}.$$
Moreover, the family $(W_n(\g))_{n\in \N}$ of weight systems forms a natural transformation between polynomial functors $A_d$ and $U_d$ of degree $2d$.

\appendix
\section{Extended N-series and extended graded Lie algebras}\label{ss52}
  We briefly review the definition of extended N-series and extended graded Lie algebras, which are defined in \cite{HM_j}, and define an action of an extended N-series on a filtered vector space and an action of an extended graded Lie algebras on a graded vector space. Then we prove that an action of an extended N-series on a filtered vector space induces an action of the associated extended graded Lie algebras on the associated graded vector space.
  
  An \emph{extended N-series} $K_{\ast}=(K_n)_{n\geq 0}$ of a group $K$ is a descending series
  $$
    K=K_0\geq K_1\geq K_2\geq \cdots
  $$
  such that $[K_n,K_m]\leq K_{n+m}$ for all $n,m\geq 0$.
  A \emph{morphism} $f: G_{\ast}\rightarrow K_{\ast}$ between extended N-series is a group homomorphism $f: G_0 \rightarrow K_0$ such that we have  $f(G_n)\subset K_n$ for all $n\geq 0.$

  For a filtered vector space $W_{\ast}$, set
  \begin{align*}
    &\Aut_0(W_{\ast}):=\Aut_{\fVect}(W_{\ast}),\\
    &\Aut_n(W_{\ast}):=\{\phi\in\Aut_0(W_{\ast}):[\phi,W_k]\subset W_{k+n}\text{ for all }k\geq 0\}\quad (n\geq 1),
  \end{align*}
  where $[\phi,w]:=\phi(w)-w$ for $w\in W_k$.
  We can easily check that $\Aut_{\ast}(W_{\ast}):=(\Aut_n(W_{\ast}))_{n\geq 0}$ is an extended N-series.

  \begin{definition}(Action of extended N-series on filtered vector spaces)
   Let $K_{\ast}$ be an extended N-series and $W_{\ast}$ be a filtered vector space.
   An \emph{action} of $K_{\ast}$ on $W_{\ast}$ is a morphism $f:K_{\ast}\rightarrow \Aut_{\ast}(W_{\ast})$ between extended N-series.
  \end{definition}

  An \emph{extended graded Lie algebra} (abbreviated as eg-Lie algebra) $L_{\bullet}=(L_n)_{n\geq0}$ is a pair of
  \begin{itemize}
    \item  a graded Lie algebra $L_{+}=\bigoplus_{n\geq 1} L_n$,
    \item  a group $L_0$ acting on $L_{+}$ in a degree-preserving way.
  \end{itemize}
  A \emph{morphism} $f_{\bullet}=(f_n:L_n\rightarrow L'_n)_{n\geq 0}: L_{\bullet} \rightarrow L'_{\bullet}$ between eg-Lie algebras consists of
  \begin{itemize}
    \item a group homomorphism $f_0: L_0 \rightarrow L'_0$,
    \item a graded Lie algebra homomorphism $f_{+}=(f_n)_{n\geq 1}:L_{+}\rightarrow L'_{+}$,
  \end{itemize}
  such that we have $f_n({}^x y)={}^{f_0(x)}(f_n(y))$ for $n\geq 1, x\in L_0$ and $y\in L_n$.

  We have a functor $\gr_{\bullet}$ from the category of extended N-series to the category of eg-Lie algebras, which maps an extended N-series $K_{\ast}$ to an eg-Lie algebra $\gr_{\bullet}(K_{\ast})=(K_0/K_1,\bigoplus_{n\geq 1}K_n/K_{n+1})$, where Lie bracket is given by the commutator and the action of $K_0/K_1$ on $\bigoplus_{n\geq 1}K_n/K_{n+1}$ is given by the adjoint action.

  For a graded vector space $W=\bigoplus_{k\geq 0}W_k$, set
  \begin{align*}
   &\End_0(W):=\Aut_{\gVect}(W),\\
   &\End_n(W):=\{\phi\in\End(W):\phi(W_k)\subset W_{k+n}\text{ for }k\geq 0\}\quad(n\geq 1).
  \end{align*}
  We can check that $\End_{\bullet}(W)=(\Aut_{\gVect}(W),\bigoplus_{n\geq 1}\End_n(W))$ is an eg-Lie algebra, where the Lie bracket is defined by
  $$[f,g]:=f\circ g-g\circ f \text{ for }f\in\End_k(W),g\in\End_l(W)$$
  and the action of $\Aut_{\gVect}(W)$ on $\bigoplus_{n\geq 1}\End_n(W)$ is defined by the adjoint action
  $${}^{g}f:=g\circ f\circ g^{-1}\text{ for }g\in\Aut_{\gVect}(W),f\in\End_k(W).$$

  \begin{definition}(Action of graded Lie algebras on graded vector spaces)\label{graction}
    Let $L_{+}=\bigoplus_{n\geq 1}L_n$ be a graded Lie algebra and $W=\bigoplus_{k\geq 0}W_k$ be a graded vector space.
    An \emph{action} of $L_{+}$ on $W$ is a morphism $f:L_{+}\rightarrow \bigoplus_{n\geq 1}\End_{n}(W)$ between graded Lie algebras.
  \end{definition}
  \begin{definition}(Action of eg-Lie algebras on graded vector spaces)
   Let $L_{\bullet}$ be an eg-Lie algebra and $W=\bigoplus_{k\geq 0}W_k$ be a graded vector space.
   An \emph{action} of $L_{\bullet}$ on $W$ is a morphism $f:L_{\bullet}\rightarrow \End_{\bullet}(W)$ between eg-Lie algebras.
  \end{definition}

  \begin{proposition}\label{p621}
   Let an extended N-series $K_{\ast}$ act on a filtered vector space $W_{\ast}$.
   Then we have an action of the eg-Lie algebra $\gr_{\bullet}(K_{\ast})$ on the graded vector space $\gr(W_{\ast})$ as follows.
   The group homomorphism
   $$\rho_0:\opegr^0(K_{\ast})\rightarrow \Aut_{\gVect}(\gr(W_{\ast}))$$
   is defined by $\rho_0(gK_1)([v]_{W_{k+1}})=[g(v)]_{W_{k+1}}$ for
   $gK_1\in \opegr^0(K_{\ast}),$
   and the graded Lie algebra homomorphism
   $$\rho_{+}:\bigoplus_{n\geq 1}\opegr^n(K_{\ast})\rightarrow\bigoplus_{n\geq 1}\End_n(\gr(W_{\ast}))$$ is defined by $\rho_{+}(gK_{n+1})([v]_{W_{k+1}})=[[g,v]]_{W_{k+n+1}}$ for $gK_{n+1}\in \opegr^n(K_{\ast})$.
  \end{proposition}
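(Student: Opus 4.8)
The plan is to check that the two families of maps $\rho_0$ and $\rho_{+}$ defined in the statement are well defined and assemble into a morphism of eg-Lie algebras $\gr_{\bullet}(K_{\ast})\to\End_{\bullet}(\gr(W_{\ast}))$; by the definition of such a morphism this breaks into four verifications: (i) $\rho_0$ is a well-defined group homomorphism $K_0/K_1\to\Aut_{\gVect}(\gr(W_{\ast}))$; (ii) for each $n\geq 1$, $\rho_{+}$ restricted to $\opegr^n(K_{\ast})$ is a well-defined homomorphism into $\End_n(\gr(W_{\ast}))$; (iii) $\rho_{+}$ respects the graded Lie brackets; (iv) $\rho_{+}$ is equivariant for the $\rho_0$-action, i.e.\ $\rho_n({}^x y)={}^{\rho_0(x)}\rho_n(y)$. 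Throughout I write $[g,v]=g(v)-v$ for $g\in K_0$ and $v\in W_k$ (as in the definition of $\Aut_{\ast}(W_{\ast})$), and I use that each $g\in K_0$ acts as a filtered $\K$-linear automorphism, so that $[g,\cdot]$ is linear and $[g,W_k]\subseteq W_{k+n}$ whenever $g\in K_n$; I write $\bar g$ for the class of $g$ in the relevant quotient. All the estimates needed are of the same type as those in Lemmas~\ref{l611}, \ref{l612} and \ref{l613}.

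For (i): if $g\in K_1$ then $g(v)-v\in W_{k+1}$ for every $v\in W_k$, so $gK_1$ acts as the identity on $\opegr^k(W_{\ast})$; hence $\rho_0$ is well defined on $K_0/K_1$, it is a homomorphism because it is nothing but $\gr$ applied to the filtered automorphisms $g$, and its values are invertible with $\rho_0(gK_1)^{-1}=\rho_0(g^{-1}K_1)$. For (ii): fix $n\geq 1$ and $g\in K_n$. Since $[g,W_{k+1}]\subseteq W_{k+n+1}$, the class $[[g,v]]_{W_{k+n+1}}$ depends only on $[v]_{W_{k+1}}$; replacing $g$ by $gh$ with $h\in K_{n+1}$ changes $[g,v]$ by $g([h,v])\in g(W_{k+n+1})=W_{k+n+1}$, so $\rho_{+}(gK_{n+1})$ is well defined; and for $g_1,g_2\in K_n$ the identity $[g_1g_2,v]=g_1([g_2,v])+[g_1,v]$ together with $g_1([g_2,v])-[g_2,v]=[g_1,[g_2,v]]\in W_{k+2n}\subseteq W_{k+n+1}$ (here $n\geq 1$) shows that $\rho_{+}$ is additive on $\opegr^n(K_{\ast})$. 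Clearly $\rho_{+}(gK_{n+1})$ shifts degree by $n$, so it lies in $\End_n(\gr(W_{\ast}))$.

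The heart of the proof is (iii). The bracket of $\gr_{\bullet}(K_{\ast})$ on $\opegr^n(K_{\ast})\times\opegr^m(K_{\ast})$ is induced by the group commutator $[g,h]=ghg^{-1}h^{-1}\in K_{n+m}$, and the bracket in $\End_{\bullet}$ is $[f,f']=f\circ f'-f'\circ f$. Unwinding the definitions, for $v\in W_k$ one computes directly that
\begin{equation*}
  \bigl(\rho_{+}(\bar g)\circ\rho_{+}(\bar h)-\rho_{+}(\bar h)\circ\rho_{+}(\bar g)\bigr)([v])=\bigl[\,g(h(v))-h(g(v))\,\bigr]_{W_{k+n+m+1}},
\end{equation*}
because $[g,[h,v]]-[h,[g,v]]=g(h(v))-h(g(v))$ as an exact identity in $W$. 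Thus it remains to show that
\begin{equation*}
  [[g,h],v]=ghg^{-1}h^{-1}(v)-v\equiv g(h(v))-h(g(v))\pmod{W_{k+n+m+1}}.
\end{equation*}
Put $w=g^{-1}h^{-1}(v)$, so that $[g,h](v)=gh(w)$ and $v=hg(w)$, hence $[[g,h],v]=gh(w)-hg(w)=(gh-hg)(w)$. Writing $g=\id+\alpha$ and $h=\id+\beta$ with $\alpha=[g,\cdot]$ and $\beta=[h,\cdot]$ linear and shifting degree by $n$ and $m$ respectively, we get $gh-hg=\alpha\beta-\beta\alpha$, which maps $W_k$ into $W_{k+n+m}$. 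As $g^{-1},h^{-1}$ are filtered automorphisms belonging to $K_n,K_m$ with $n,m\geq 1$, we have $w\equiv v\pmod{W_{k+1}}$, so $(gh-hg)(w)\equiv(gh-hg)(v)=g(h(v))-h(g(v))\pmod{W_{k+n+m+1}}$, as wanted. This is precisely the kind of filtration bookkeeping already performed in the proof of Lemma~\ref{l613}, and it is the one step that genuinely needs care.

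Finally, for (iv): given $x\in K_0$ and $h\in K_n$ we have ${}^x(hK_{n+1})=(xhx^{-1})K_{n+1}$, and for $v\in W_k$
\begin{equation*}
  \rho_{+}\bigl((xhx^{-1})K_{n+1}\bigr)([v])=\bigl[xhx^{-1}(v)-v\bigr]_{W_{k+n+1}}=\bigl[x\bigl(h(x^{-1}v)-x^{-1}v\bigr)\bigr]_{W_{k+n+1}}=\rho_0(\bar x)\Bigl(\rho_{+}(hK_{n+1})\bigl(\rho_0(\bar x)^{-1}[v]\bigr)\Bigr),
\end{equation*}
which is exactly $\rho_n({}^x y)={}^{\rho_0(x)}\rho_n(y)$. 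Combining (i)--(iv), the pair $(\rho_0,\rho_{+})$ is a morphism of eg-Lie algebras, i.e.\ the asserted action of $\gr_{\bullet}(K_{\ast})$ on $\gr(W_{\ast})$. The only real obstacle is the commutator estimate in (iii); the remaining points are routine transcriptions of the lemmas established earlier in this section.
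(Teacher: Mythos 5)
Your proof is correct and follows essentially the same route as the paper: verify $\rho_0$ and $\rho_+$ are well defined, check the bracket identity $[[g,h],v]\equiv[g,[h,v]]-[h,[g,v]]$ modulo $W_{k+n+m+1}$, and check $\rho_0$-equivariance. In fact your step (iii), via the substitution $w=g^{-1}h^{-1}(v)$ and the expansion $gh-hg=\alpha\beta-\beta\alpha$, supplies the filtration bookkeeping that the paper's proof asserts without detail, so nothing further is needed.
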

  \begin{proof}
   Firstly, we prove that there is a well-defined group homomorphism $\rho_0$.
   Since $K_{\ast}$ acts on $W_{\ast}$, we have a group homomorphism $$K_0\rightarrow \Aut(\opegr^k(W_{\ast})).$$
   Moreover, since $[g(v)]_{W_{k+1}}=[[g,v]+v]_{W_{k+1}}=[v]_{W_{k+1}}$ for $g\in K_1$ and $v\in W_k$, it follows that
   $K_1\rightarrow \Aut(\opegr^k(W_{\ast}))$ is trivial.
   Thus, the group homomorphism $$\rho_0:\opegr^0(K_{\ast})=K_0/K_1\rightarrow \Aut_{\gVect}(\gr(W_{\ast}))$$ is induced.

   Secondly, we prove that there is a well-defined Lie algebra homomorphism $\rho_{+}$.
   Since $K_{\ast}$ acts on $W_{\ast}$, we can check that $\rho_{+}$ is well defined.
   Moreover, the map $\rho_{+}$ is a Lie algebra homomorphism because for $g\in K_n$, $h\in K_{n'}$ and $v\in W_k$, we have
   \begin{gather*}
    \begin{split}
      [[gK_{n+1},hK_{n'+1}],[v]_{W_{k+1}}]
      &=[[g,h]K_{n+n'+1},[v]_{W_{k+1}}]\\
      &=[[g,h],v]_{W_{k+n+n'+1}}\\
      &=[[g,[h,v]]-[h,[g,v]]]_{W_{k+n+n'+1}}\\
      =&[gK_{n+1},[hK_{n'+1},[v]_{W_{k+1}}]]-[hK_{n'+1},[gK_{n+1},[v]_{W_{k+1}}]].
    \end{split}
   \end{gather*}

   Finally, we check that $\rho_{+}$ is compatible with $\rho_0$.
   For $g\in K_0, h\in K_n$ and $v\in W_k$, we have
   \begin{gather*}
    \begin{split}
      \rho_{+}({}^{gK_1}hK_{n+1})([v]_{W_{k+1}})
      &=[[ghg^{-1},v]]_{W_{k+n+1}}\\
      &=[g([h,g^{-1}(v)])]_{W_{k+n+1}}\\
      &={}^{\rho_0(gK_1)}(\rho_{+}(hK_{n+1}))([v]_{W_{k+1}}).
    \end{split}
   \end{gather*}
   Therefore, this is an action of $\gr_{\bullet}(K_{\ast})$ on $\gr(W_{\ast})$.
  \end{proof}


\end{document}